\newtheorem{Thm}{Theorem}{\bfseries}{\itshape}
\newtheorem*{Thm*}{Theorem}{\bfseries}{\itshape}
\newtheorem{Cor}{Corollary}{\bfseries}{\itshape}
\newtheorem{Prop}[Cor]{Proposition}{\bfseries}{\itshape}
\newtheorem{Lem}[Cor]{Lemma}{\bfseries}{\itshape}
\newtheorem*{Lem*}{Lemma}{\bfseries}{\itshape}
{\bfseries}{\itshape}
{\bfseries}{\itshape}
\newtheorem{Def}[Cor]{Definition}{\bfseries}{\rmfamily}
\newtheorem{Ex}[Cor]{Example}{\scshape}{\rmfamily}
{\scshape}{\rmfamily}
{\bfseries}{\itshape}
\renewcommand\ge{\geqslant} \renewcommand\le{\leqslant}
\let\tildeaccent=\~ \let\hataccent=\^
\renewcommand\~[1]{\widetilde{#1}}
\def\<{\left<} \def\>{\right>} \def\({\left(} \def\){\right)}
\def\abs#1{\left\vert #1 \right\vert} \def\norm#1{\left\Vert #1
  \right\Vert} 
\let\parasymbol=\S \def\secref#1{\parasymbol\ref{#1}}
\let\polishL=l \def\Zoladek.{\.Zol\c adek}
 \def\Im{\operatorname{Im}}
\def\etc.{\emph{etc}.}
 \def\Sing{\operatorname{Sing}}
\def\:{\colon} \def\R{{\mathbb R}} \def\C{{\mathbb C}}  \def\N{{\mathbb N}} \def\Q{{\mathbb Q}}
\let\PolishL=\L 
\def\L{{\mathbb L}}
 \def\e{\varepsilon} \def\S{\varSigma}
\def\l{\lambda}   
 \def\Lojas.{\PolishL ojasiewicz}
 \def\cH{{\mathcal H}}
 \def\cR{{\mathcal R}}
  \def\cR{{\mathcal R}}
\def\cI{{\mathcal I}}  
 \def\cD{{\mathcal D}}
\def\cM{{\mathcal M}}
\def\cO{{\mathcal O}}
\def\cC{{\mathcal C}}
\def\cV{{\mathcal V}}
\def\rest#1{{\vert_{#1}}}
\def\clo{\operatorname{Clo}}
\def\supp{\operatorname{supp}}
\def\id{\operatorname{id}}
\def\alg{{\mathrm{alg}}}
\def\trans{{\mathrm{trans}}}
\def\an{{\mathrm{an}}}
\def\vf{{\mathbf f}}
\def\vg{{\mathbf g}}
\def\vx{{\mathbf x}}
\def\vz{{\mathbf z}}
\def\vp{{\mathbf p}}
\begin{document}

\title[The Pila-Wilkie theorem for subanalytic families]{The
  Pila-Wilkie theorem for subanalytic families: a complex analytic
  approach}

\author{Gal Binyamini} \author{Dmitry Novikov}
\address{Weizmann Institute of Science, Rehovot, Israel}
\email{galbin@gmail.com}

\begin{abstract}
  We present a complex analytic proof of the Pila-Wilkie theorem for
  subanalytic sets. In particular, we replace the use of $C^r$-smooth
  parametrizations by a variant of Weierstrass division.  
\end{abstract}
\maketitle
\date{\today}

\section{Introduction}

\subsection{Statement of the main results}

For a set $A\subset\R^m$ we define the \emph{algebraic part} $A^\alg$
of $A$ to be the union of all connected semialgebraic subsets of $A$
of positive dimension. We define the \emph{transcendental part}
$A^\trans$ of $A$ to be $A\setminus A^\alg$.

Recall that the \emph{height} of a (reduced) rational number
$\tfrac a b\in\Q$ is defined to be $\max(|a|,|b|)$. For a vector $x$
of rational numbers we denote by $H(x)$ the maximum among the heights
of the coordinates. For a set $A\subset\C^m$ we denote the set of
$\Q$-points of $A$ by $A(\Q):=A\cap\Q^m$ and denote
\begin{equation}
  A(\Q,H) := \{x\in A(\Q):H(x)\le H\}.
\end{equation}
For $A\subset\R^{m+n}$ and $y\in\R^n$ we denote the $y$-fiber of $A$
by
\begin{equation}
  A_y\subset\R^m, \qquad A_y := \{ x\in\R^m : (x,y)\in A \}.
\end{equation}
The following is our main result.
\begin{Thm}\label{thm:main}
  Let $A\subset\R^{m+n}$ be a bounded subanalytic set and $\e>0$.
  There exists an integer $N(A,\e)$ such that
  \begin{equation}
    \#(A_y)^\trans(\Q,H) \le N(A,\e) H^\e.
  \end{equation}
\end{Thm}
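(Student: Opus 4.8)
The plan is to follow the Pila–Wilkie strategy, but replacing the classical $C^r$-parametrization theorem of Gromov–Yomdin–Pila–Wilkie with a complex-analytic substitute based on Weierstrass division, so that we obtain, uniformly in the parameter $y$, a bounded family of analytic maps covering $A_y$ whose derivatives enjoy good bounds. First I would reduce to a cell-decomposition situation: by the uniform finiteness theorems for subanalytic sets, the fibers $A_y$ decompose into boundedly many subanalytic cells, and it suffices to count rational points on the transcendental part of each cell. Each cell is the image of a subanalytic map from a cube, so after this reduction the task is to produce, for a given bounded subanalytic family of maps $\phi_y\colon (0,1)^k\to\R^m$, a reparametrization into boundedly many pieces on which all derivatives up to some order $r$ are bounded by $1$ — this is exactly the ingredient that feeds the Pila–Wilkie counting lemma (the "determinant" or "interpolation" lemma, giving that the $\Q$-points of height $\le H$ on such a piece lie on $\mathrm{poly}(r)$ hypersurfaces of controlled degree).

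The heart of the argument, and the place where the complex-analytic approach enters, is the construction of this reparametrization. Rather than smoothing in the real category, I would complexify: a bounded subanalytic set is, roughly, a projection of a bounded semianalytic set, and semianalytic sets are cut out by real-analytic functions which extend to holomorphic functions on a complex neighborhood. The key technical device is a Weierstrass-division-type normal form: after a suitable (parameter-dependent but boundedly-complex) change of coordinates, the defining holomorphic functions can be written so that the relevant branches are given by Weierstrass polynomials, and division lets one control the analytic continuation and hence, via Cauchy estimates on a fixed polydisc, the high-order derivatives on a smaller real cube. Carrying this out uniformly in $y$ requires the complex analogue of the subanalytic cell decomposition to be done with parameters, and this is where I expect the main obstacle: one must track that the number of cells, the degrees of the Weierstrass polynomials, the radii of the polydiscs, and the resulting derivative bounds are all bounded by constants depending only on $A$ and not on $y$. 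This uniformity is plausible because subanalyticity is preserved under the constructions involved (projection, fiberwise operations), but making the complex-analytic reparametrization itself uniform — in particular controlling where the "bad" directions for Weierstrass preparation occur as $y$ varies — is the delicate point.

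Granting the reparametrization, the rest is the standard Pila–Wilkie endgame. Fix $\e>0$ and choose $r=r(\e)$ large. Apply the reparametrization to each cell of each $A_y$ to get boundedly many analytic pieces $\psi\colon (0,1)^k\to A_y$ with all derivatives of order $\le r$ bounded by $1$. For each such piece and each $H$, the counting lemma shows that $\psi^{-1}(A_y(\Q,H))$ — equivalently the rational points of height $\le H$ on the image — is contained in the union of $O(H^{k/(r+1)}\cdot\mathrm{poly}(r))$ real-algebraic hypersurfaces of degree $O(r)$ intersected with the piece. Choosing $r$ so that $k/(r+1)<\e$ handles the main term, and the intersection of the image with each such hypersurface is again a bounded subanalytic set of strictly lower dimension, so one concludes by induction on $\dim A$: the lower-dimensional pieces contribute either to the algebraic part (when a whole positive-dimensional semialgebraic piece sits inside $A_y$, it is absorbed into $A_y^\alg$) or, transcendentally, fall under the inductive hypothesis. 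Summing the $O(H^\e)$ bounds over the boundedly many cells and pieces gives $\#(A_y)^\trans(\Q,H)\le N(A,\e)H^\e$, with $N$ depending only on $A$ and $\e$ by the uniformity established above. The only serious novelty, and correspondingly the only serious difficulty, is the uniform complex-analytic reparametrization via Weierstrass division replacing the real $C^r$ theorem.
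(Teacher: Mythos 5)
Your proposal correctly identifies the standard Pila--Wilkie endgame (interpolation determinants, $H^\e$ hypersurfaces of degree $d(\e)$, induction on dimension) and correctly locates the crux in making the degree-zero step uniform over the family. But the architecture you propose for that crux --- complexify, put the defining functions in Weierstrass normal form, and then use Cauchy estimates on a fixed polydisc to produce boundedly many \emph{parametrizing maps} $\psi\colon(0,1)^k\to A_y$ with all derivatives (up to order $r$, but in your scheme effectively all orders, since Cauchy estimates on a fixed polydisc bound every derivative) at most $1$ --- runs into a known obstruction. Uniform analytic, or even $C^\infty$, parametrization of the fibers of a family is simply impossible in general: for the hyperbolas $X_\e=(-1,1)^2\cap\{x^2-y^2=\e\}$, the number of analytic charts with uniformly bounded derivatives needed to cover $X_\e$ tends to infinity as $\e\to 0$, because no fixed polydisc around the corner carries a uniformly bounded analytic branch of $y=\pm\sqrt{x^2-\e}$. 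This is precisely why Yomdin--Gromov and Pila--Wilkie must settle for $C^r$-smoothness with $r$ finite, and that finite-$r$ statement is not something Cauchy estimates on fixed polydiscs will hand you. So the step you flag as ``the only serious difficulty'' is not merely delicate; as formulated, it is false.

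The paper's actual route is genuinely different and avoids parametrization altogether. Instead of covering $A_y$ by images of maps, it covers (a complexification of) $A_y$ by polydiscs $\Delta_p$ on which every holomorphic $F$ admits a Weierstrass/Hironaka-type decomposition $F=\sum_{\alpha\in\cM}c_\alpha\vx^\alpha+Q$ with $Q$ vanishing identically on the set and with $\norm{c_\alpha\vx^\alpha}$ controlled by $\norm F$ (a ``decomposition datum''). The interpolation determinant is then estimated directly from this decomposition, exactly as one would from a Taylor expansion, using that $Q$ contributes nothing at points of the set; no chart maps appear. Crucially, this \emph{is} uniform over families --- for the hyperbola family the single unit polydisc at the origin works for every $\e$, since division by $y^2-x^2+\e$ has uniformly bounded norms --- and the uniformity in general is established by an induction involving blowings-up in the parameter space to kill jumps in fiber dimension (Proposition~\ref{prop:decomp}), plus a Chow-variety trick to make the inductive intersections $X\cap H_k$ into a single compact-parameter family. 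A secondary gap in your write-up: the passage from the subanalytic $A$ to a complex-analytic family is not ``roughly a projection of a semianalytic set''; the paper needs the Denef--van den Dries quantifier elimination in $L^D_\an$ and a careful treatment of restricted division to realize $A_y$ as the real points of an admissible projection of an analytic set.
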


The result of Theorem~\ref{thm:main} is not new. In fact, it was
conjectured in \cite[Conjecture~1.2]{pila:subanalytic-dilation} and
proved, in a much more general form, in the work of Pila and Wilkie
\cite{pila-wilkie}, where the same result is shown to hold for any $A$
definable in an O-minimal structure. Our goal in the present paper is
to develop an alternative complex analytic approach to this theorem.
In particular, while the proof of \cite{pila-wilkie} requires the use
of $C^r$ parametrizations of subanalytic sets, we are able to carry
out the arguments completely within the analytic category.
In~\secref{sec:motivation} we present some motivations for our
approach. In~\secref{sec:bpw-review} we briefly review the method of
Bombieri-Pila-Wilkie, and in particular explain the point at which
$C^r$-parametrizations are required. In~\secref{sec:decomp-intro} we
give an outline of the complex-analytic approach developed in this
paper and explain how it avoids the use of $C^r$-parametrizations.

\subsection{Motivation}
\label{sec:motivation}

There are two directions in which one might hope to improve
the Pila-Wilkie estimate $\#A^\trans(\Q,H)\le N(A,\e) H^\e$:
\begin{itemize}
\item Effective estimates: one may hope to obtain effective
  estimates for the constant $N(A,\e)$ in terms of the complexity
  of the equations/formulas used to define $A$.
\item Sharper asymptotics: one may hope to improve the asymptotic
  dependence on $H$ if $A$ is definable is a suitably tame structure.
  As a notable example, the Wilkie conjecture states that if $A$ is
  definable in $\R_{\exp}$ then
  $\#A^\trans(\Q,H) = N(A)(\log H)^{\kappa(A)}$.
\end{itemize}
Both of these directions have been considered in the literature, see
e.g. \cite{butler,jones-thomas,pila:exp-alg-surface,pila:pfaff}.
However, as discussed in~\secref{sec:bpw-review}, the proof of the
Pila-Wilkie theorem in arbitrary dimensions requires the use of
$C^r$-reparametrizations, whose complexity is difficult to control
even in the semialgebraic case. For this reason, most of the work (to
our knowledge) has been restricted to $A$ of (real) dimension one or
two.

Our primary goal in this paper is to develop an approach that replaces
the use of $C^r$-parametrization by direct considerations on the local
complex-analytic geometry of $A$. In a closely related preprint
\cite{me:rest-wilkie} we use this approach to prove the Wilkie
conjecture for sets definable using the \emph{restricted} exponential
and sine functions. We believe that this approach may also play a
significant role in advancing toward an effective version of the
Pila-Wilkie theorem for Noetherian functions.

Finally, while in this paper we work in the complex analytic setting,
our arguments are essentially algebraic -- tracing to the Weierstrass
preparation and division theorems. One may hope that such an approach
could allow a more direct generalization to different algebraic
contexts where the analytic notion of $C^r$-parametrization may be
more difficult to recover. In particular we consider it an interesting
direction to check whether the method developed in this paper can
offer an alternative approach to the work of Cluckers, Comte and
Loeser \cite{ccl:nonarchimedean-pw} on non-archimedean analogs of the
Pila-Wilkie theorem. We remark in this context that in our primary
model-theoretic reference \cite{denef-vdd} the complex-analytic and
$p$-adic contexts are treated in close analogy.

\subsection{Exploring rational points following Bombieri-Pila and Pila-Wilkie}
\label{sec:bpw-review}

\subsubsection{The case of curves}
\label{sec:bpw-curves}
Let $X\subset\R^2$ be compact irreducible real-analytic curve.
Building upon earlier work by Bombieri and Pila \cite{bombieri-pila},
Pila \cite{pila:density-Q} considered the problem of estimating
$\#X(\Q,H)$. More specifically, he showed that if $X$ is
transcendental then for every $\e>0$ there exists a constant $C(X,\e)$
such that $\#X(\Q,H)\le C(X,\e)H^\e$. Bombieri and Pila's method
involves constructing a collection of $H^\e$ hypersurfaces $\{H_k\}$
of degree $d=d(\e)$ such that $X(\Q,H)$ is contained in $\cup_k H_k$.
We briefly recall the key idea, starting with the notion of an
interpolation determinant.

Suppose first that $X$ can be written as the image of an analytic map
$\vf=(f_1,f_2):[0,1]\to X$ (the general case will be treated later by
subdivision). For simplicity we will suppose that $f_1,f_2$ extend to
holomorphic functions in the disc of radius $2$ around the origin,
with absolute value bounded by $M$.

Let $\vg:=(g_1,\ldots,g_\mu)$ be a collection of functions and
$\vp:=(p_1,\ldots,p_\mu)$ a collection of points. We define the
\emph{interpolation determinant}
\begin{equation}
  \Delta(\vg,\vp) := \det (g_i(p_j))_{1\le i,j\le \mu}.
\end{equation}
Let $d\in\N$ and set $\mu=d(d+1)/2$, the dimension of the space of
polynomials in two variables of degree at most $d$. We define the
\emph{polynomial interpolation determinant} of degree $d$ to be
\begin{equation}
  \Delta^d(\vf,\vp) := \Delta(\vg,\vp), \qquad \vg=(f_1^kf_2^l : k,l\in\N, k+l\le d).
\end{equation}
Note that $\Delta^d(\vf,\vp)=0$ if and only if the points
$\vf(p_1),\ldots,\vf(p_\mu)$ all lie on a common algebraic
hypersurface of degree at most $d$. More generally, if $P\subset I$ and
$\Delta^d(\vf,\vp)=0$ for every $\vp\subset P$ then the points
$\vf(p):p\in P$ all lie on a common algebraic hypersurface of degree
at most $d$. 

Let $H\in\N$. Our goal is to construct a collection of algebraic
hypersurfaces $H_k$ whose union contains $X(\Q,H)$. By the above, it
will suffice to subdivide $I$ into intervals $I_k$ such that for any
$\vp\subset I_k$, if $H(\vp)\le H$ then $\Delta^d(\vf,\vp)=0$. We
begin with the following key estimate on the polynomial interpolation
determinant.

\begin{Lem}[\protect{cf. Lemma~\ref{lem:id-upper-bd}}]\label{lem:intro-id-upper-bd}
  Let $I\subset[0,1]$ be an interval of length $\delta<1/2$, and
  $\vp=(p_1,\ldots,p_\mu)\in I^\mu$. Then
  \begin{equation}\label{eq:intro-upper-bd}
    |\Delta^d(\vf,\vp)| \le \mu! (2\mu+2)^\mu M^{d\mu} \delta^{\mu^2/2}.
  \end{equation}
\end{Lem}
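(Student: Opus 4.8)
The plan is to establish this via the classical interpolation-determinant estimate of Bombieri--Pila, organized through a Taylor expansion at the midpoint of $I$ together with the Cauchy--Binet formula. Write $\vg=(g_1,\dots,g_\mu)$ with $g_i=f_1^{k_i}f_2^{l_i}$, so that $\Delta^d(\vf,\vp)=\det\bigl(g_i(p_j)\bigr)_{1\le i,j\le\mu}$. Since $f_1,f_2$ are holomorphic on $\{\abs{z}<2\}$ and bounded there by $M$, each $g_i$ is holomorphic on $\{\abs{z}<2\}$ with $\abs{g_i}\le M^d$ on it (we may assume $M\ge1$, replacing $M$ by $\max(M,1)$ otherwise). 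Let $c$ be the midpoint of $I$; since $c\in[0,1]$, the closed disc $\{\abs{z-c}\le 1\}$ is contained in $\{\abs{z}<2\}$, so writing $g_i(z)=\sum_{k\ge0}a_{i,k}(z-c)^k$, Cauchy's estimate gives $\abs{a_{i,k}}\le M^d$ for all $i$ and $k$.

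First I would substitute these expansions into the determinant. Since $g_i(p_j)=\sum_{k\ge0}a_{i,k}(p_j-c)^k$, expanding the determinant row by row and collecting terms according to which exponent is used in each column --- equivalently, applying the Cauchy--Binet formula to the factorization of $\bigl(g_i(p_j)\bigr)$ as the product of the (infinite) coefficient matrix $(a_{i,k})$ and the (infinite) matrix $\bigl((p_j-c)^k\bigr)$ --- gives
\[
  \Delta^d(\vf,\vp)=\sum_{0\le k_1<\dots<k_\mu}\det\bigl(a_{i,k_s}\bigr)_{i,s=1}^{\mu}\cdot\det\bigl((p_j-c)^{k_s}\bigr)_{s,j=1}^{\mu},
\]
the sum running over strictly increasing $\mu$-tuples of non-negative integers (a repeated value would give two equal rows in the second factor, hence $0$). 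For each tuple the first factor is a $\mu\times\mu$ determinant with entries of modulus $\le M^d$, so $\abs{\det(a_{i,k_s})}\le\mu!\,M^{d\mu}$, while the second factor is a generalized Vandermonde determinant in the shifted points $p_j-c$, all of which lie in the interval of length $\delta$ centered at $0$.

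The core of the estimate is the bound on this generalized Vandermonde, which is where the confinement of the $p_j$ to a short interval is used. Via the Schur-polynomial factorization
\[
  \det\bigl((p_j-c)^{k_s}\bigr)_{s,j}=\pm\,s_\lambda\bigl(p_1-c,\dots,p_\mu-c\bigr)\cdot\prod_{1\le i<j\le\mu}(p_i-p_j),
\]
with $\lambda$ the partition determined by $k_1<\dots<k_\mu$, together with the bounds $\abs{p_i-p_j}\le\delta$ and $\abs{p_j-c}\le\delta/2$, the generalized Vandermonde is bounded by a combinatorial factor times a power of $\delta$ that is an explicit function of the tuple. Substituting back, each term of the sum is at most a combinatorial constant times $M^{d\mu}$ times a power of $\delta$; since $\delta<1/2$ the sum over all (infinitely many) admissible tuples converges, and is controlled up to a factor $\le 2^\mu$ by its dominant term. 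Collecting everything --- a $\mu!$ from each of the two determinants, the $2^\mu$ from the tail of the tuple sum, and the combinatorial constants coming from the Schur and Vandermonde bounds --- and simplifying, one arrives at the asserted estimate $\mu!\,(2\mu+2)^\mu M^{d\mu}\delta^{\mu^2/2}$.

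The step I expect to be the main obstacle is precisely this last accounting: identifying exactly the power of $\delta$ carried by the dominant tuple together with the geometric tail --- so as to land on the exponent $\mu^2/2$ --- and bounding all the combinatorial factors that appear (from the Cauchy--Binet sum, from the Schur-polynomial factorization, and from the count of admissible tuples) tightly enough that they fit inside $\mu!\,(2\mu+2)^\mu$. The analytic ingredient (Cauchy's estimate for the Taylor coefficients) and the algebraic identity (Cauchy--Binet) are routine; the delicacy lies entirely in the combinatorial bookkeeping.
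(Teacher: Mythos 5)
Your route is genuinely different from the paper's. The paper truncates the Taylor expansion of each $g_i$ at order $\mu$, keeps an explicit remainder $R_\mu(g_i)$ with $\norm{R_\mu(g_i)}\le 2M^d\delta^\mu$, expands the determinant by multilinearity into $(\mu+1)^\mu$ summands, and discards every summand in which two columns carry the same monomial degree; the surviving summands have distinct degrees, hence total degree at least $0+1+\cdots+(\mu-1)$, and a Laplace expansion finishes the estimate. You instead keep the full series and factor the matrix through Cauchy--Binet, so the distinct-degree mechanism appears as the restriction to strictly increasing tuples, and the truncation error is replaced by the tail of the tuple sum. Both arguments rest on the same mechanism; yours is a viable alternative (closer to the original Bombieri--Pila organization), while the paper's truncate-and-expand version is the one that transports directly to the decomposition-datum setting of Lemma~\ref{lem:id-upper-bd}.

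Two points in your final accounting need repair, and they are exactly where you deferred the work. First, the exponent: the dominant tuple in your own expansion is $(0,1,\ldots,\mu-1)$, whose second factor is the ordinary Vandermonde, bounded only by $\delta^{\mu(\mu-1)/2}$; no bookkeeping can upgrade this to $\delta^{\mu^2/2}$, and taking $f_1(t)=t$ with generic spread-out points shows $\delta^{\binom{\mu}{2}}$ is the true order of magnitude. (The paper's own proof likewise yields $\delta^{\mu(\mu-1)/2}$; the exponent $\mu^2/2$ in the statement is a slip, harmless since only $\delta^{\Omega(\mu^2)}$ is used downstream.) So you should target $\mu!\,(2\mu+2)^\mu M^{d\mu}\delta^{\mu(\mu-1)/2}$ rather than chase $\mu^2/2$. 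Second, the claim that the tuple sum is controlled ``up to a factor $2^\mu$'' by its dominant term does not follow from the Schur factorization as you set it up: bounding $\abs{s_\lambda(p-c)}\le s_\lambda(1,\ldots,1)(\delta/2)^{\abs{\lambda}}$ and summing over all partitions gives $(1-\delta/2)^{-\mu}(1-\delta^2/4)^{-\binom{\mu}{2}}$, which is exponential in $\mu^2$ and for $\delta$ near $1/2$ eventually exceeds not only $2^\mu$ but the entire budget $(2\mu+2)^\mu$. The fix is to drop the Schur step: bound the generalized Vandermonde by its permutation expansion, $\abs{\det((p_j-c)^{k_s})}\le\mu!\,(\delta/2)^{k_1+\cdots+k_\mu}$, so the sum over increasing tuples is at most $\mu!\,(\delta/2)^{\binom{\mu}{2}}\prod_{j\ge1}(1-(\delta/2)^j)^{-1}$, an absolute constant times $\mu!\,(\delta/2)^{\binom{\mu}{2}}$ when $\delta<1/2$; combined with $\abs{\det(a_{i,k_s})}\le\mu!\,M^{d\mu}$ this gives $\abs{\Delta^d(\vf,\vp)}\le C(\mu!)^2M^{d\mu}(\delta/2)^{\binom{\mu}{2}}\le\mu!\,(2\mu+2)^\mu M^{d\mu}\delta^{\mu(\mu-1)/2}$, which is precisely what the paper's proof delivers.
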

\begin{proof}
  By translation we may suppose that $I=[-\delta,\delta]$ and that
  $f_1,f_2$ are holomorphic in the unit disc $D\subset\C$ with
  absolute value bounded by $M$. Denote by $\norm\cdot$ the maximum
  norm on the disc of radius $\delta$ around the origin.

  Every function in $\vg$ is holomorphic in $D$ with absolute value
  bounded by $M^d$. Consider the Taylor expansions
  \begin{align}\label{eq:intro-g-taylor}
    g_i &= \sum_{j=0}^{\mu-1} m_j(g_i)+R_\mu(g_i) & m_j(g_i)&:=c_{i,j} x^j & i&=1,\ldots,\mu.
  \end{align}
  and $R_j(g_i)$ are the Taylor residues. From the Cauchy estimates we have
  \begin{align}\label{eq:intro-g-taylor-estimates}
    \norm{m_j(g_i)}&\le M^d \delta^j & \norm{R_\mu(g_i)}&\le 2M^d \delta^\mu
  \end{align}
  Expand the determinant $\Delta^d(\vf,\vp)$ by linearity
  using~\eqref{eq:intro-g-taylor}, to obtain a sum of $(\mu+1)^\mu$
  summands with each $g_i$ replaced by either $m_{j_i}(g_i)$ or
  $R_\mu(g_i)$. Note that any summand where two different indices
  $j_k,j_l$ agree vanishes identically since the corresponding
  functions $m_{j_k}(g_k),m_{j_l}(g_l)$ are linearly dependent.
  Therefore any non-zero summand must contain a term of order at least
  one, a term of order at least two, and so on. Then an easy
  computation using~\eqref{eq:intro-g-taylor-estimates} and the
  Laplace expansion for each determinant
  gives~\eqref{eq:intro-upper-bd}.
\end{proof}

Let $I,\vp$ be as in Lemma~\ref{lem:intro-id-upper-bd} and suppose
$\vf(p_1),\ldots,\vf(p_\mu)\in X(\Q,H)$. Using the bounded heights of
$\vf(p_j)$ one proves (cf. Lemma~\ref{lem:id-lower-bd}) that either
$\Delta^d(f,\vp)=0$ or
\begin{equation}\label{eq:intro-lower-bd}
  |\Delta^d(f,\vp)| \ge H^{-O(d^3)}.
\end{equation}
Comparing~\eqref{eq:intro-upper-bd} and~\eqref{eq:intro-lower-bd} and
recalling that $\mu\sim d^2$ we have either $\Delta^d(f,\vp)=0$ or
\begin{equation}
  H^{-O(d^3)} \le |\Delta^d(f,\vp)| \le 2^{O(d^3)} \delta^{\Omega(d^4)}
\end{equation}
where we treat $M$ as $O(1)$. Thus if $\delta=H^{-\Omega(1/d)}$ then
$\Delta^d(f,\vp)$ must vanish for any $\vp$ as above. Thus as
explained above, \emph{all} points $\vf(p)\in X(\Q,H)$ with $p\in I$
belong to a single algebraic hypersurface $H_k\subset\R^2$ of degree
at most $d$.

Fix $\e>0$ and subdivide $I$ into $H^\e$ subintervals $I_k$ of length
$\delta=H^{-\e}$. Then by the above all points of $X(\Q,H)$ belong to
a union of $H^\e$ hypersurfaces $H_k\subset\R^2$ of degree
$d=O(1/\e)$. If $X$ is irreducible and transcendental then it
intersects each $H_k$ properly, and number of intersections between
$X$ and $H_k$ is uniformly bounded by some constant $C(X,d)$ depending
only on $X$ and $d$ (for instance by Gabrielov's theorem). Thus we
have $\#X(\Q,H)\le C(X,\e) H^\e$.

To handle the case of a general compact irreducible analytic curve
$X\subset\R^2$ we note that any such curve can be covered by images of
analytic maps $\vf:[0,1]\to X$ and the preceding arguments apply.

\subsubsection{Higher dimensions}
\label{sec:bpw-higher-dim}
It is natural to attempt to generalize the proof
of~\secref{sec:bpw-curves} to sets $X\subset\R^m$ of dimension
$\ell>1$ by induction over $\ell$. Namely, the
estimates~\eqref{eq:intro-upper-bd} and~\eqref{eq:intro-lower-bd} can
be generalized in a relatively straightforward manner, replacing the
map $\vf:(0,1)\to X$ by an arbitrary analytic map
$\vf:(0,1)^\ell\to X$ parametrizing an $\ell$-dimensional set $X$. One
similarly obtains $H^\e$ hypersurfaces $H_k$ of some fixed degree
$d=d(\e)$ such that
\begin{equation}
  X(\Q,H) \subset \bigcup_k X\cap H_k.
\end{equation}
One would then seek to continue treating each intersection $X\cap H_k$
by induction on the dimension. However, at this point a problem
arises: even if the original set $X$ was parametrized by an analytic
map $\vf:(0,1)^\ell\to X$ it is not clear that $X\cap H_k$ could be
parametrized in a similar manner. Moreover, if one does obtain a
parametrization for each intersection $X\cap H_k$ then the induction
constant $C(X\cap H_k,\e)$ would now depend on the specific
parametrizations chosen for $X\cap H_k$, and one must show that these
constants are uniformly bounded over all $H_k$ of the given degree
$d$.

In fact, it is not always possible to choose analytic, or even
$C^\infty$-smooth, para\-metrizations for the fibers of a family in a
uniform manner -- even for semialgebraic families of curves. This
fundamental limitation was observed in the work of Yomdin
\cite{yomdin:entropy}. Consider for example the family of hyperbolas
$X_\e := (-1,1)^2\cap\{x^2-y^2=\e\}$. If one attempts to write $X_\e$
as a union of images $\Im\phi_j$ for $C^\infty$-smooth functions
$\phi_1,\ldots,\phi_{N_\e}:(0,1)\to X_\e$ with the maximum norms of
the derivatives of every order bounded by $1$, then $N_\e$ necessarily
tends to infinity as $\e\to0$. Thus it would not be possible to
parametrize all fibers of this family in a uniform manner and apply
to them the methods of~\secref{sec:bpw-curves}.

Surprisingly, a theorem due to Yomdin and Gromov
\cite{yomdin:entropy,yomdin:gy,gromov:gy} states that one can recover
the uniformity of $N_\e$ if one replaces the $C^\infty$ condition by
$C^r$-smoothness for a fixed $r$, at least for semialgebraic families.
In \cite{pila-wilkie} Pila and Wilkie generalized this result to the
O-minimal setting. Namely, they show
\cite[Corollary~5.1]{pila-wilkie} that for any set $X\subset(0,1)^m$
of dimension $\ell$ definable in an O-minimal structure and any
$r\in\N$, one can cover $X$ by images of $C^{r}$-maps
$\phi_1,\ldots,\phi_N:(0,1)^\ell\to X$ where $N=N(X,r)$ and every
$\phi_j$ has $C^{r}$-norm bounded by $1$ in $(0,1)^\ell$. Moreover,
\cite[Corollary~5.2]{pila-wilkie} if $X$ varies in a definable family
(and $r$ is fixed) then $N$ can be taken to be uniformly bounded over
the entire family.

One can now check that in the proof sketched
in~\secref{sec:bpw-curves} the analyticity assumption can be replaced
by $C^r$-smoothness (with bounded norms) for sufficiently large
$r=r(\e)$. The intersections $X\cap H_k$ can all be seen as fibers of
a single definable family by adding parameters for the coefficients of
$H_k$. One can thus parametrize each intersection $X\cap H_k$ by a
uniformly bounded number of $C^r$ maps with unit norms, and the
induction step can be carried out as sketched above.

Understanding the behavior of the parametrization complexity $N(X,r)$
in terms of the geometry of the set $X$ and the smoothness order $r$
is a highly non-trivial problem, even in the original context of the
Yomdin-Gromov theorem where $X$ is semialgebraic, and certainly in the
context of the O-minimal analog. It is this difficulty that prompted
us to seek a more direct approach for resolving the problem of
uniformity over families.

\subsection{An approach using holomorphic decompositions}
\label{sec:decomp-intro}

We return to the case of a compact irreducible real-analytic curve
$X\subset\R^2$. Let $p\in X$ and consider $(X,p)$ as the germ of a
complex-analytic curve. Then by Weierstrass preparation $X$ can be
written locally (up to a linear change of coordinate) in the form
\begin{equation}
  X=\{h=0\}, \qquad h(x,y)=y^\nu+a_{\nu-1}(x)y^{d-1}+\cdots+a_0(x)
\end{equation}
where $a_{\nu-1},\ldots,a_0$ are holomorphic in a neighborhood of $p$. By
Weierstrass division it follows that any $F$ holomorphic in a
neighborhood of $p$ can be written in the form
\begin{equation}\label{eq:intro-F-decomp}
  F = \sum_{i=0}^{\nu-1} \sum_{j=0}^\infty c_{i,j} y^i x^j + Q
\end{equation}
where $Q$ vanishes identically on $(X,p)$. Moreover, the coefficients
$c_{i,j}$ are bounded in terms of the norm of $F$ (cf.
Lemma~\ref{lem:preparation}). Let $\Delta_p\subset\C^2$ denote a
complex polydisc where the decomposition~\eqref{eq:intro-F-decomp} is
possible for any holomorphic $F$. We suppose for simplicity that
$\Delta_p$ has polyradius $1$ (the general case can be treated by
rescaling).

The polydiscs $\Delta_p$ serve as a replacement for the
parametrizations of~\secref{sec:bpw-curves}: we will show that one can
construct, in a completely analogous manner, $H^\e$ algebraic
hypersurfaces of degree $d=d(\e)$ containing all points of
$(\Delta_p\cap X)(\Q,H)$. Thus, instead of covering $X$ by images of
analytic parametrizing maps, we are led to the problem of covering $X$
by such ``good neighborhoods'' $\Delta_p$.

The key argument is the following analog of
Lemma~\ref{lem:intro-id-upper-bd}. Let $f_1,f_2$ be two holomorphic
functions on the polydisc of radius $2$ around $p$ and let their
absolute values be bounded by $M$.
\begin{Lem}[\protect{cf. Lemma~\ref{lem:id-upper-bd}}]
  \label{lem:intro-id-upper-bd-decomp}
  Let $D\subset \Delta_p$ be a polydisc of polyradius $\delta<1/2$, and
  $\vp=(p_1,\ldots,p_\mu)\in (D\cap X)$. Then
  \begin{equation}\label{eq:intro-upper-bd-decomp}
    |\Delta^d(\vf,\vp)| \le \mu! (\mu+1)^\mu M^{d\mu} \delta^{\Omega(\mu^2)}.
  \end{equation}
\end{Lem}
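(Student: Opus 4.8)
The plan is to mimic the proof of Lemma~\ref{lem:intro-id-upper-bd}, but to replace the ordinary Taylor expansion of each $g_i=f_1^{k_i}f_2^{l_i}$ by the expansion~\eqref{eq:intro-F-decomp} furnished by Weierstrass division against the germ $(X,p)$. Concretely, first I would apply the decomposition to each of the $\mu$ functions $g_i$, writing $g_i=\sum_{0\le a\le\nu-1}\sum_{b\ge 0} c^{(i)}_{a,b}\,y^a x^b+Q_i$ with $Q_i$ vanishing identically on $(X,p)$. Since the interpolation points $p_j$ lie on $X$, the terms $Q_i$ contribute nothing to $\Delta^d(\vf,\vp)$, so we may discard them at the outset and work only with the ``normal form'' monomials $y^a x^b$ with $a\le\nu-1$. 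The coefficient bound from Lemma~\ref{lem:preparation}, together with the assumed bound $M$ on $f_1,f_2$ on the polydisc of radius $2$, gives $|c^{(i)}_{a,b}|\le C^d M^d$ for an absolute constant $C$ depending only on $\Delta_p$ (and we normalized $\Delta_p$ to polyradius $1$, so the normal-form series converges on $D$).

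Next I would organize these normal-form monomials by a weight. The key point is that the monomials $\{y^a x^b : 0\le a\le\nu-1,\ b\ge 0\}$, ordered by increasing ``level'' $b$, behave exactly like a single power series in one variable after grouping: at each level $b$ there are at most $\nu$ monomials, and on the polydisc $D$ of polyradius $\delta$ the monomial $y^a x^b$ is bounded by $\delta^{a+b}$, hence by $\delta^{b}$. Truncating each $g_i$ at level $L:=\lceil\mu/\nu\rceil$ produces a principal part $m(g_i)$ (a linear combination of $\le \nu L$ monomials of level $<L$) plus a residue $R(g_i)$ with $\norm{R(g_i)}\le 2C^dM^d\delta^{L}$ on $D$, by the usual Cauchy-type tail estimate applied level by level. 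Now expand $\Delta^d(\vf,\vp)$ multilinearly into $(\nu L+1)^\mu$ summands, replacing each $g_i$ by one of its $\le\nu L$ principal monomials or by its residue $R(g_i)$. As in the original proof, any summand in which two rows are assigned the same normal-form monomial vanishes (those rows are proportional). Since there are $\mu$ rows and at most $\nu$ monomials at each level, a nonvanishing summand must use monomials occupying at least $\lceil\mu/\nu\rceil=L$ distinct levels — so the product of the row-bounds is at most $\delta^{0+1+\cdots}$ where the exponents run over $\ge L$ distinct nonnegative integers taken with multiplicity $\le\nu$, giving $\delta^{\ge (1/\nu)\cdot(0+1+\cdots+(L-1))\cdot\nu\text{-adjusted}}$; a clean bookkeeping shows the total exponent of $\delta$ is $\Omega(\mu^2/\nu)=\Omega(\mu^2)$ since $\nu$ is a fixed constant of the germ. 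Combining with the Laplace expansion of each determinant (contributing $\mu!$ and the $M^{d\mu}$ and constant factors) yields~\eqref{eq:intro-upper-bd-decomp}.

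The step I expect to be the main obstacle is the combinatorial weight accounting: in the one-variable case each Taylor monomial $x^j$ occupies its own distinct order $j$, so a nonvanishing summand automatically uses orders $0,1,\dots$ and one reads off $\delta^{0+1+\cdots+(\mu-1)}=\delta^{\binom\mu2}$. Here several distinct normal-form monomials ($y^0x^b,\dots,y^{\nu-1}x^b$) share the same level $b$, so one must argue carefully that the multiplicity is bounded by $\nu$ and extract the $\Omega(\mu^2)$ exponent — rather than merely $\Omega(\mu^2/\nu^2)$ or worse — possibly by assigning the refined weight $w(y^ax^b)=\nu b+a$ (which \emph{is} injective on the normal-form monomials, taking each value in $\{0,1,2,\dots\}$ exactly once) and bounding $\norm{y^ax^b}\le\delta^{a+b}\le\delta^{w/\nu}$; then a nonvanishing summand uses $\mu$ distinct weights, so the $\delta$-exponent is at least $\tfrac1\nu(0+1+\cdots+(\mu-1))=\Omega(\mu^2)$ as $\nu$ is fixed. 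The remaining subtlety is that the principal part must include \emph{enough} monomials that the residues are genuinely small on $D$; taking all monomials of weight $<\nu\mu$ suffices and keeps the number of summands at $(\nu\mu+1)^\mu\le(\mu+1)^{O(\mu)}$, which is absorbed into the stated $(\mu+1)^\mu$ up to adjusting the implied constant in $\Omega(\mu^2)$. All other ingredients — the coefficient bound from Weierstrass division, the vanishing of $Q_i$ on $X$, the Laplace expansion — are routine once this weighting is fixed.
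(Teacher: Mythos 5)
Your proof is correct and takes essentially the same route as the paper's: the paper likewise replaces the Taylor expansion by the decomposition~\eqref{eq:intro-F-decomp}, notes that $Q$ vanishes at the points of $\vp$, and observes that having at most $\nu$ monomials per order only costs a factor depending on $\nu$ in the exponent, which is absorbed into $\Omega(\mu^2)$. Your injective weight $w(y^ax^b)=\nu b+a$ is simply an explicit way of carrying out the bookkeeping the paper leaves implicit.
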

\begin{proof}
  The proof is essentially the same as that of
  Lemma~\ref{lem:intro-id-upper-bd}. We simply replace the Taylor
  expansion of the function $f_1,f_2$ by the
  expansions~\eqref{eq:intro-F-decomp} (and note that $Q$ vanishes on
  all points of $\vp$). In~\eqref{eq:intro-F-decomp} we have at most
  $\nu$ terms of each order $k$ (instead of one term of each order in
  the case of Taylor expansions), and this only introduces an extra
  factor depending on $\nu$ into the asymptotic
  $\delta^{\Omega(\mu^2)}$ in~\eqref{eq:intro-upper-bd-decomp}.
\end{proof}

We now proceed as in~\secref{sec:bpw-curves} taking $f_1=x$ and
$f_2=y$. In a similar manner, we can cover $\Delta_p\cap\R^2$ by
$H^\e$ polydiscs $D_k$ of polyradius $H^{-\e/2}$, and for each $D_k$
we find an algebraic hypersurface $H_k$ of degree $d=O(1/\e)$ such
that $(D_k\cap X)(\Q,H)\subset H_k$. Thus we see that
$(\Delta_p\cap X)(\Q,H)$ is contained in a union of $H^\e$ algebraic
hypersurfaces of degree $d$. Since $X$ is compact it may be covered by
finitely many of the polydiscs $\Delta_p$, and we finally see that
$X(\Q,H)$ is contained in a union of $O(H^\e)$ algebraic hypersurfaces
of degree $d$.

The main advantage of this approach becomes apparent when we consider
families of curves. Namely, unlike in the case of analytic
parametrizations, the argument above can be made uniform over analytic
families. To illustrate this consider again the family of hyperbolas
$X_\e := (-1,1)^2\cap\{x^2-y^2=\e\}$. The unit polydisc around the
origin $\Delta_0$ is a ``good neighborhood'' in the sense above,
\emph{uniformly for every $\e$}. Indeed, Weierstrass division with
respect to $y^2-x^2+\e$ is possible regardless of the value of $\e$
and the norms of the division remain bounded even as $\e\to0$. A
systematic application of Weierstrass division allows one to
generalize this example to an arbitrary family.

The purpose of this paper is to pursue this complex-analytic
perspective. In~\secref{sec:uniform-decomposition} we define the notion
of a \emph{decomposition datum} (see
Definition~\ref{def:decomposition}) generalizing the ``good
neighborhoods'' $\Delta_p$ above for complex analytic sets of
arbitrary dimension. We then prove in Theorem~\ref{thm:decomp} that
one can always cover (a compact piece of) a complex analytic set by
finitely many such polydiscs, and that this can be done uniformly over
analytic families (with a compact parameter space).
In~\secref{sec:interpolation-determinants} we show that in each such
polydisc the rational points of height $H$ can be described in analogy
with the Bombieri-Pila method of~\secref{sec:bpw-curves}.
In~\secref{sec:exploring} we prove a result analogous to the
Pila-Wilkie theorem for complex analytic sets of arbitrary dimension
(and their projections) by induction over dimension, in analogy with
the Pila-Wilkie method of~\secref{sec:bpw-higher-dim}. Finally
in~\secref{sec:subanalytic} we show that any bounded subanalytic set
can be complexified in an appropriate sense, and deduce
Theorem~\ref{thm:main} from the its complex-analytic version
Theorem~\ref{thm:exploring-projection}. The key technical tool for
this reduction is a quantifier-elimination result of Denef and van den
Dries \cite{denef-vdd}.

\section{Uniform decomposition in analytic families}
\label{sec:uniform-decomposition}

\subsection{Weierstrass division with norm estimates}

If $Z$ is a subset of a complex manifold $\Omega$ we denote by
$\cO(Z)$ the ring of germs of holomorphic functions in a neighborhood
of $Z$. If $Z$ is relatively compact in $\Omega$ we denote by
$\norm{\cdot}_Z$ the maximum norm on $\cO(\bar Z)$. We denote by
$\cO_\Omega$ the structure sheaf of $\Omega$, and if $X\subset\Omega$
is an analytic subset we denote by $\cI_X\subset\cO_\Omega$ its ideal
sheaf and by $\cI_{X,p}$ the germ of $\cI_X$ at $p$. Finally for an
ideal sheaf $\cI\subset\cO_\Omega$ we denote by $V(\cI)$ the analytic
set that it defines.

We say that a germ $f\in\C\{z_1,\ldots,z_n,w\}$ is regular of order
$d$ in $w$ if $f(0,w)=f_1(w)\cdot w^d$ with $f_1(0)\neq0$. For two
polydiscs $\Delta_v\subset\C$ and $\Delta_h\subset\C^n$, we say that
$\Delta:=\Delta_h\times\Delta_v$ is a \emph{Weierstrass polydisc} for
$f$ if $f(z,w)$ has exactly $d$ roots in $\Delta_v$ for any fixed
$z\in\bar\Delta_h$. In particular, $\Delta$ is a Weierstrass polydisc
for any sufficiently small $\Delta_v$ and sufficiently smaller
$\Delta_h$.

\begin{Lem}\label{lem:preparation}
  Let $f$ be regular of order $d$ in $w$, and
  $\Delta:=\Delta_h\times\Delta_v$ a sufficiently small Weierstrass
  polydisc for $f$. Then:
  \begin{enumerate}
  \item The map
    \begin{equation}
      \pi:\{f=0\}\cap \Delta \to \C^n, \qquad \pi(z,w)=z
    \end{equation}
    is finite.
  \item There exists a constant $C$ such that any $g\in\cO(\bar\Delta)$
    can be decomposed in the form
    \begin{equation}
      g = qf+ \sum_{k=0}^{d-1}  g_j w^j, \qquad g_j=g_j(z)
    \end{equation}
    with
    $\norm{g_j}_\Delta,\norm{q}_\Delta \le C\cdot\norm{g}_\Delta$.
  \end{enumerate}
\end{Lem}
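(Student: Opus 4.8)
The plan is to obtain both assertions from the classical Weierstrass preparation and division theorems, taking care to track the norm estimates through an explicit integral representation. First I would establish~(1). Since $\Delta$ is a Weierstrass polydisc for $f$, for each fixed $z\in\bar\Delta_h$ the function $w\mapsto f(z,w)$ has exactly $d$ zeros in $\Delta_v$ and, after shrinking if necessary, no zeros on $\partial\Delta_v$; hence $\pi$ is proper onto $\Delta_h$ with finite fibers of cardinality $d$ (counted with multiplicity), which is the assertion of finiteness. The standard way to see this cleanly is via the argument principle: the number of zeros is $\frac1{2\pi\iu}\oint_{\partial\Delta_v}\frac{\partial_w f(z,w)}{f(z,w)}\d w$, a continuous integer-valued function of $z$, hence locally constant and equal to $d$ by the regularity hypothesis at $z=0$.

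For~(2), the natural approach is to write down the Weierstrass division remainder explicitly via a Cauchy-type kernel. Fix $z\in\bar\Delta_h$ and consider, for $w\in\Delta_v$,
\begin{equation}
  r(z,w) := \frac{1}{2\pi\iu}\oint_{\partial\Delta_v} \frac{g(z,\zeta)}{f(z,\zeta)}\cdot\frac{f(z,\zeta)-f(z,w)}{\zeta-w}\d\zeta.
\end{equation}
A routine check shows that $\frac{f(z,\zeta)-f(z,w)}{\zeta-w}$ is, for fixed $\zeta$, a polynomial in $w$ of degree $d-1$ whose coefficients are holomorphic in $(z,\zeta)$ (using that $f$ is, after the preparation theorem, a unit times a Weierstrass polynomial of degree $d$ — or more directly by expanding in the $w$-variable). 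Hence $r(z,w)=\sum_{k=0}^{d-1} g_k(z) w^k$ with each $g_k$ given by a contour integral of $g(z,\zeta)/f(z,\zeta)$ against a holomorphic kernel, and therefore holomorphic on $\bar\Delta_h$. Then $q := (g-r)/f$ is holomorphic on $\bar\Delta$: the apparent poles along $\{f=0\}$ cancel because $r(z,\cdot)$ interpolates $g(z,\cdot)$ at the $d$ zeros of $f(z,\cdot)$ (with multiplicity), which is exactly the vanishing property built into the kernel $f(z,\zeta)-f(z,w)$.

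The norm estimates then follow by bounding these integrals. The quantity $\oint_{\partial\Delta_v}|\d\zeta|$ is a fixed constant depending on $\Delta$; the kernel $\frac{f(z,\zeta)-f(z,w)}{\zeta-w}$ is bounded on $\bar\Delta_h\times\partial\Delta_v\times\bar\Delta_v$ by a constant depending only on $f$ and $\Delta$; and $1/|f(z,\zeta)|$ is bounded on $\bar\Delta_h\times\partial\Delta_v$ by a constant, since $f$ is nonvanishing there and this set is compact. Consequently $\norm{g_k}_\Delta\le C\norm{g}_\Delta$ for a constant $C=C(f,\Delta)$, and then $\norm{q}_\Delta\le (\norm{g}_\Delta+\norm{r}_\Delta)/\inf_{\bar\Delta}|f|$ — but one must be slightly careful here since $f$ vanishes inside $\Delta$, so rather than dividing pointwise one estimates $q$ by applying the maximum principle on each slice $\{z\}\times\Delta_v$: $q(z,\cdot)$ is holomorphic on $\Delta_v$, so $\norm{q(z,\cdot)}_{\Delta_v}$ is attained on $\partial\Delta_v$, where $|q|=|g-r|/|f|$ is controlled as above. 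I expect the main obstacle to be precisely this point — arranging the estimate for $q$ so that one does not need a lower bound for $|f|$ on all of $\bar\Delta$ (where none exists), but only on the "Shilov-type" boundary $\bar\Delta_h\times\partial\Delta_v$, and then propagating the bound inward by the maximum principle slice-by-slice. The verification that the kernel coefficients are genuinely holomorphic (not merely continuous) in $z$ up to the boundary also deserves care, but follows from differentiating under the integral sign together with the hypothesis that $f\in\cO(\bar\Delta)$.
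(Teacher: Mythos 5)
Your plan is correct, but note that the paper does not actually carry out this argument: after the same initial reduction (shrink $\Delta$ and invoke Weierstrass preparation to replace $f$ by a Weierstrass polynomial of degree $d$ in $w$), it simply records that part~(1) is classical and cites the extended Weierstrass division theorem of Gunning--Rossi for part~(2), which is precisely the statement with the norm bounds. What you propose is, in substance, the standard proof of that cited theorem: the Cauchy-kernel representation of the remainder, holomorphy of the coefficients $g_k$ by differentiation under the integral, and the estimate for $q$ via the maximum principle on the slices $\{z\}\times\Delta_v$ using the lower bound for $|f|$ on the compact set $\bar\Delta_h\times\partial\Delta_v$ --- you correctly isolate the two delicate points, and the argument-principle proof of properness and finiteness for~(1) is fine. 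One small correction: your parenthetical alternative ``or more directly by expanding in the $w$-variable'' does not work. For a holomorphic $f$ that is not a polynomial in $w$, the difference quotient $\bigl(f(z,\zeta)-f(z,w)\bigr)/(\zeta-w)$ is an infinite power series in $w$, not a polynomial of degree $d-1$; so the preliminary reduction to a Weierstrass polynomial (dividing by the polynomial factor $P$ and absorbing the unit $u=f/P$ into the quotient $q$, which preserves the bounds since $u$ is bounded away from zero on $\bar\Delta$ for $\Delta$ small) is genuinely required, not optional.
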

\begin{proof}
  Since $\Delta$ is taken to be sufficiently small we may assume
  without loss of generality that $f$ is a Weierstrass polynomial of
  order $d$ in $w$. Then the first statement is classical and the
  second is the extended Weierstrass preparation theorem of
  \cite[II.D.~Theorem 1]{gr:analytic}.
\end{proof}

\subsection{Decomposition data}

We denote by $\vz$ a fixed system of affine coordinates on $\C^n$. We
say that $\vx$ is a \emph{standard} coordinate system on $\C^n$ if it
is obtained from $\vz$ by an affine unitary transformation. Given
$\vx$, we say that $(\Delta,\Delta')$ is a pair of polydiscs if
$\Delta\subset\Delta'$ are two polydiscs with the same center in the
$\vx$ coordinates.

For a co-ideal $\cM\subset\N^n$ and $k\in\N$ we denote by
\begin{equation}
  \cM^{\le k}:=\{ \alpha\in \cM : \abs{\alpha}\le k\}
\end{equation}
and by $H_\cM(k):=\#\cM^{\le k}$ its Hilbert-Samuel function. The function
$H_\cM(k)$ is eventually a polynomial in $k$, and we denote its degree
by $\dim\cM$.

If $(X,p)$ is the germ of an analytic set in $\C^n$ then there
exists a co-ideal $\cM$ with $\dim\cM=\dim X$ such that every
$F\in\cO_p$ can be decomposed as
\begin{equation}
  F = \sum_{\alpha\in\cM} c_\alpha \vz^\alpha + Q, \qquad Q\in\cO_p
\end{equation}
where $Q$ vanishes identically on $X$. For instance one may choose
$\cM$ to be the complement of the diagram of initial exponents of
$\cI_{X,p}$, in which case the claim above is a consequence of Hironaka
division. The following definition generalizes this notion from
the context of germs to the context of a fixed polydisc.

\begin{Def}\label{def:decomposition}
  Let $X\subset\C^n$ be a locally analytic subset, $\vx$ a standard
  coordinate system, $(\Delta,\Delta')$ a pair of polydiscs centered
  at the $\vx$-origin and $\cM\subset\N^n$ a co-ideal. We say that $X$
  admits \emph{decomposition} with respect to the \emph{decomposition
    datum}
  \begin{equation}
     \cD:=(\vx,\Delta,\Delta',\cM)
  \end{equation}
  if there exists a constant denoted $\norm\cD$ such that for every holomorphic
  function $F\in\cO(\bar \Delta')$ there is a decomposition
  \begin{equation}\label{eq:F-decomp}
    F = \sum_{\alpha\in\cM} c_\alpha \vx^\alpha + Q, \qquad Q\in\cO(\bar \Delta)
  \end{equation}
  where $Q$ vanishes identically on $X\cap \Delta$ and
  \begin{equation}\label{eq:F-decomp-norms}
    \norm{c_\alpha \vx^\alpha}_{\Delta} \le  \norm{\cD}\cdot \norm{F}_{\Delta'} \qquad \forall\alpha\in\cM.
  \end{equation}
  We define the \emph{dimension} of the decomposition datum, denoted
  $\dim\cD$ to be $\dim\cM$.
\end{Def}

Since $\cH_\cM(k)$ is eventually a polynomial of degree $\dim\cM$, the
function $\cH_\cM(k)-\cH_\cM(k-1)$ counting monomials of degree $k$ in
$\cM$ is eventually a polynomial of degree $\dim\cM-1$. If
$\dim\cM\ge1$ we denote by $e(\cD)$ the minimal constant satisfying
\begin{equation}
   H_\cM(k)-H_\cM(k-1) \le e(\cD)\cdot L(\dim\cM,k), \qquad \forall k\in\N.
\end{equation}
where $L(n,k):=\binom{n+k-1}{n-1}$ denotes the dimension of the space
of monomials of degree $k$ in $n$ variables. In the case $\dim\cD=0$
the co-ideal $\cM$ is finite and we denote by $e(\cD)$ its size.

\begin{Ex}\label{ex:decomposition-dim0}
  Suppose $X$ admits decomposition with respect to the decomposition
  datum $\cD$, and $\dim\cD=0$. Then $N=\#(X\cap\Delta)$ is finite and
  satisfies $N\le e(\cD)$. Indeed, by~\eqref{eq:F-decomp} any
  polynomial on $\C^n$ can be interpolated on $X\cap\Delta$ by the
  $e(\cD)$ monomials of $\cM$. Since the linear space of polynomials
  restricted to $X\cap\Delta$ has dimension $N$ it follows that
  $N\le e(\cD)$.
\end{Ex}

\subsection{Decomposition data for analytic families}

If $X\subset\C^n$ is a locally analytic subset and $k\in\N$, we denote
by $X^{\le k}$ the union of the components of $X$ that have dimension
$k$ or less. Note that $X^{\le k}$ is locally analytic as well.

Let $\Omega\subset\C^n$ be an open subset and $\Lambda$ a complex
analytic space. We denote by $\pi_\Omega,\pi_\Lambda$ the projections
from $\Omega\times\Lambda$ to $\Omega,\Lambda$ respectively. For
$X\subset\Omega\times\Lambda$ and $\l\in\Lambda$ we denote the
$\l$-fiber of $X$ by
\begin{equation}
  X_\l\subset\Omega, \qquad X_\l := \{ p\in\Omega : (p,\l)\in X \}.
\end{equation}

The following theorem is our main result on uniform decomposition in
families. It says roughly that if one considers a compact piece of an
analytic family $X$, then each fiber $X_\l$ at every point $p$ admits
decomposition with respect to some decomposition datum $\cD$ with
$\dim\cD=\dim X_\l$, with the size of the polydisc $\Delta$ bounded
from below and $\norm\cD,e(\cD)$ bounded from above \emph{uniformly
over the (compact) family}.

\begin{Thm}\label{thm:decomp}
  Let $X\subset \Omega\times\Lambda$ be an analytic subset,
  $K\Subset\Omega\times\Lambda$ a compact subset and $k\in\N$. There
  exists a positive radius $r>0$ and constants $C_D,C_H>0$ with the
  following property. For any $(p,\l)\in K$ there exists a
  decomposition datum $\cD$ such that:
  \begin{enumerate}
  \item $\Delta=\Delta'$ is centered at $p$, and
    $B_r(p)\subset\Delta\subset\Omega$.
  \item $\dim\cM_i\le k$, $\norm\cD\le C_D$ and $e(\cD)\le C_H$.
  \item $(X_\l)^{\le k}$ admits decomposition with respect to $\cD$
  \end{enumerate}
\end{Thm}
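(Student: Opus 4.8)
The plan is to reduce the family version to a local, pointwise statement by a compactness argument, and to produce the local decomposition datum at a single point by an inductive application of Weierstrass preparation and division (Lemma~\ref{lem:preparation}). First I would fix $(p_0,\l_0)\in K$ and analyze the germ of $X$ at $(p_0,\l_0)$. Write $X=\bigcup_j X^{(j)}$ as a finite union of irreducible components near $(p_0,\l_0)$; for each component whose fiber over $\l_0$ has dimension $\le k$ I want to build a decomposition datum, and then take a common refinement (intersect the polydiscs, combine the co-ideals by taking, say, the co-ideal generated by a product of defining functions — more precisely the complement of the sum of the diagrams of initial exponents). The point is that a decomposition for each component, with a $Q_j$ vanishing on that component, combines to a decomposition for the union with $Q=\prod_j Q_j \cdot(\text{stuff})$ vanishing on all of $(X_{\l_0})^{\le k}\cap\Delta$; the Hilbert--Samuel function of the combined co-ideal still has the right degree $\le k$, so $\dim\cD\le k$.

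For a single component, after a standard (unitary) change of coordinates $\vx=(\vx',\vx'')$ with $\vx''=(x_{k+1},\dots,x_n)$ and $\vx'=(x_1,\dots,x_k)$, I would arrange by the local parametrization theorem that the projection to the $\vx'$-coordinates is finite on the component near $(p_0,\l_0)$, uniformly in $\l$ near $\l_0$: this means choosing generators of $\cI_{X^{(j)},(p_0,\l_0)}$, each regular of some order in the last $\vx''$-coordinate after a generic linear change, applying Lemma~\ref{lem:preparation} to get Weierstrass polynomials, and iterating to eliminate $x_n, x_{n-1},\dots, x_{k+1}$ one at a time. Each application of Lemma~\ref{lem:preparation} gives, on a suitable Weierstrass polydisc $\Delta_h\times\Delta_v$, a division $F = qf + \sum_{i<d} F_i w^i$ with norm bound $\norm{F_i},\norm q\le C\norm F$. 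Composing these divisions reduces any $F\in\cO(\bar\Delta')$ modulo $\cI_{X^{(j)}}$ to a polynomial in $\vx''$ of bounded degree with coefficients holomorphic in $\vx'$; then Taylor-expanding those coefficients in $\vx'$ and truncating (Cauchy estimates again) produces the finite-in-each-degree monomial expansion $\sum_{\alpha\in\cM}c_\alpha\vx^\alpha + Q$ of~\eqref{eq:F-decomp}, with $\cM$ the co-ideal "degree $<d_{k+1}$ in $x_{k+1}$, $\dots$, degree $<d_n$ in $x_n$, arbitrary in $\vx'$", so that $\dim\cM=k$ and $e(\cD)$ is controlled by $\prod d_i$. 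The remainder $Q$ is a finite sum $\sum q_\ell f_\ell$ (with $f_\ell$ the Weierstrass polynomials, which generate $\cI_{X^{(j)}}$ on $\Delta$), hence vanishes on $X^{(j)}\cap\Delta$, and all norm bounds~\eqref{eq:F-decomp-norms} follow by multiplying the finitely many constants $C$ from the successive divisions together with the Cauchy-estimate constants; note $\Delta'$ must be taken genuinely larger than $\Delta$ to run the final Taylor truncation, but here the theorem allows (indeed requires) $\Delta=\Delta'$, so instead I start from a datum on $\Delta'$ and shrink — the norm $\norm F_{\Delta'}$ on the right of~\eqref{eq:F-decomp-norms} is exactly what a Cauchy estimate on the smaller $\Delta$ consumes. (Strictly: the statement has $\Delta=\Delta'$, so I work throughout with a single polydisc and use interior Cauchy estimates on a slightly smaller concentric polydisc, absorbing the loss into the constant.)

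Finally, the passage from a pointwise statement to the uniform family statement: the construction at $(p_0,\l_0)$ actually produces, for all $(p,\l)$ in a small neighborhood $U_{(p_0,\l_0)}$ of $(p_0,\l_0)$, a \emph{single} coordinate system $\vx$, a \emph{single} polydisc $\Delta$ (re-centered at $p$, of a fixed polyradius), a \emph{single} co-ideal $\cM$ and a \emph{single} constant $\norm\cD$ that works — because the Weierstrass polynomials and all the division constants depend continuously (indeed holomorphically) on the extra parameters $\l$ and on recentering, and a Weierstrass polydisc for the germ at $(p_0,\l_0)$ remains a Weierstrass polydisc for all nearby germs after a uniform shrink. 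Then $K$ is covered by finitely many such $U_{(p_0,\l_0)}$; take $r$ to be the minimum of the (fixed) radii, $C_D$ the maximum of the $\norm\cD$'s, and $C_H$ the maximum of the $e(\cD)$'s over this finite cover, and every $(p,\l)\in K$ lies in one of them, giving the required datum. The step I expect to be the main obstacle is the uniformity of the iterated Weierstrass preparation in the parameter $\l$ and in recentering the polydisc — in particular making sure that "generic linear change of coordinates to achieve regularity" can be chosen once and for all on a neighborhood, that it remains a unitary change as the definition of "standard coordinate system" demands, and that the number of elimination steps and the degrees $d_i$ (hence $\dim\cM$ and $e(\cD)$) do not jump at special fibers; this is where one needs the components $X^{(j)}$ and the fact that we only demand decomposition for $(X_\l)^{\le k}$, discarding higher-dimensional components whose fibers are too big, and where one exploits that $X^{\le k}$ is again locally analytic.
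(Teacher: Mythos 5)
Your outline reproduces the easy half of the paper's argument (Lemma~\ref{lem:decomp-pure}: iterated Weierstrass division eliminating the variables $x_n,\dots,x_{k+1}$ one at a time, with norm control from Lemma~\ref{lem:preparation}, followed by a translation-and-compactness reduction), but it has a genuine gap exactly at the point you flag as ``the main obstacle,'' and the fix you sketch there does not work. The problem is that the fiber dimension of a \emph{single irreducible component of the total space} $X\subset\Omega\times\Lambda$ can jump at special parameters, so neither ``discard the components whose fibers are too big'' nor ``the Weierstrass data depend continuously on $\l$'' resolves it. The paper's own test case is $X=\{\l_1x=\l_2\}\subset\C_x\times\C^2$ with $k=0$: this germ is irreducible at $(0,(0,0))$, its fiber over every $\l\ne(0,0)$ is a single point (so $(X_\l)^{\le 0}=X_\l$ is nonempty and must be covered by a zero-dimensional datum with uniformly bounded constants as $\l\to(0,0)$), yet the fiber over $(0,0)$ is all of $\C_x$. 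If you discard this component at $\l_0=(0,0)$ because its fiber there has dimension $>k$, your $Q$ fails to vanish on $(X_\l)^{\le 0}$ for every nearby $\l\ne(0,0)$; if you keep it, no function in its ideal is regular in $x$ at $\l_0$ (the defining function $\l_1x-\l_2$ vanishes identically on the fiber), so the elimination step cannot even start, and the division constants of $G=\l_1x-\l_2$ blow up as $\l\to(0,0)$ after normalizing the leading coefficient. Upper semicontinuity of fiber dimension works against you here: the bad locus is precisely where the generic (small) fiber dimension is \emph{not} attained.

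The missing idea is the content of Proposition~\ref{prop:decomp}: project the germ of $X$ finitely to $\C^d\times\Lambda$ (with $d=\dim X_{\l_0}$), pick $G=\sum_\alpha c_\alpha(\l)w^\alpha$ vanishing on the image, and \emph{blow up the coefficient ideal} $I=\langle c_\alpha\rangle$ in $\Lambda$. On the exceptional locus $V(I)$ one inducts on $\dim\Lambda$; off it, the strict transform of $X$ has all fiber dimensions $<d$, so one inducts on $d$, and the whole induction is on the pair $(\dim\Lambda,\max_\l\dim X_\l)$ in lexicographic order. Only after this reduction does one land in the pure case where your iterated-Weierstrass argument (the paper's Lemma~\ref{lem:decomp-pure}) applies. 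Without some device of this kind --- a modification of the parameter space that separates the branches on which the leading coefficients vanish --- the uniformity of the constants $C_D$, $C_H$ and of $\dim\cM$ across the family cannot be obtained from local continuity alone. Your remaining steps (combining data for several components, the recentering/compactness argument, and the $\Delta=\Delta'$ bookkeeping) are fine and essentially match the paper.
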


We first consider the problem of constructing decomposition data of
dimension $k$ for fibers of a family $X$, under the assumption that
all fibers of $X$ have dimension bounded by $k$. This basic case
essentially reduces to Hironaka division. For completeness we give a
proof using Weierstrass division.

\begin{Lem}\label{lem:decomp-pure}
  Let $X\subset \Omega\times\Lambda$ be an analytic subset, $k\in\N$
  and suppose $\dim X_\l\le k$ for every $\l\in\Lambda$. Then for any
  $p\in\Omega$ and compact $K_\Lambda\Subset\Lambda$ a there exists a
  finite collection of decomposition data $\{\cD_i\}$ such that:
  \begin{enumerate}
  \item $\Delta_i=\Delta_i'$ is centered at $p$ and contained in $\Omega$.
  \item $\dim\cM_i\le k$.
  \item for every $\l\in K_\Lambda$ the fiber $X_\l$ admits
    decomposition with respect to some $\cD_i$.
  \end{enumerate}
\end{Lem}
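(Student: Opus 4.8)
The plan is to reduce to a local, single-variable Weierstrass-division situation by induction on $k$, using a generic choice of standard coordinates so that all relevant fibers project properly. First I would pass to a small polydisc neighborhood of $p$ and a relatively compact piece of $\Lambda$, so that $X$ near $(p,K_\Lambda)$ decomposes into finitely many irreducible components $X^{(j)}$. For each component the hypothesis $\dim X_\l \le k$ means $\dim X^{(j)} \le k + \dim\Lambda$; after a generic affine unitary change of the $\vz$-coordinates (this is where the ``standard coordinate system'' clause is used) I may assume that $X^{(j)}$ is, near $p$, in Weierstrass position with respect to one of the $\vx$-coordinates, say $w=x_n$, so that the projection $\pi\colon X^{(j)}\cap\Delta \to \C^{n-1}\times\Lambda$ forgetting $w$ is finite, say of degree $d_j$. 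Then a defining function of $X^{(j)}$ can be taken to be (a factor of) a Weierstrass polynomial $f_j(z',\l;w) = w^{d_j} + \cdots$ of degree $d_j$ in $w$, holomorphic on $\bar\Delta$, with coefficients depending holomorphically on the remaining variables and on $\l$ — crucially, this is a family statement, and the norm estimates of Lemma~\ref{lem:preparation} hold uniformly because we have fixed a single compact polydisc.

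The main work is then the inductive step. Writing $X = \bigcup_j X^{(j)}$, I would first produce, for each $j$ separately, a decomposition datum realizing division by $f_j$: by Lemma~\ref{lem:preparation} every $F\in\cO(\bar\Delta')$ can be written $F = q f_j + \sum_{i=0}^{d_j-1} g_i(z',\l) w^i$ with norms controlled by $\norm{F}_{\Delta'}$, and $q f_j$ vanishes on $X^{(j)}\cap\Delta$. This expresses $F$ modulo the ideal of $X^{(j)}$ as a polynomial of degree $<d_j$ in $w$ with coefficients holomorphic in the $n-1$ remaining spatial variables. Now I apply the induction hypothesis (on $k$, or more precisely on the number of spatial variables, with $k$ decreasing to $k-1$ in directions transverse to the hyperplane $\{w=0\}$) to each of the finitely many functions $g_i(\cdot,\l)$ regarded as holomorphic on a polydisc in $\C^{n-1}\times\Lambda$: the restriction of $X^{(j)}$ to a generic hyperplane section, or rather the image under $\pi$, has fibers of dimension $\le k-1$ in one fewer variable — this needs a small argument, since $\pi(X^{(j)}_\l)$ need not be analytic, but one can instead work with $X^{(j)}$ itself viewed over the larger parameter space $\C\times\Lambda$ obtained by absorbing $w$, or more cleanly invoke the co-ideal bookkeeping directly. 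Composing the outer Weierstrass division with the inner decomposition data (one for each $g_i$) and merging the resulting monomials yields a co-ideal $\cM_j$ of the required form, with $\dim\cM_j \le k$, and with the product of the finitely many norm constants giving $\norm{\cD_j}$. Finally, over each $\l$ some component $X^{(j)}$ passes through $p$ with the right local dimension, so $X_\l$ near $p$ admits decomposition with respect to the corresponding $\cD_j$; taking the finite collection $\{\cD_j\}$ over all components (and over the finitely many coordinate choices needed to put every component in Weierstrass position) completes the proof.

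The hard part will be the bookkeeping that keeps the co-ideal structure intact under composition: I need to check that substituting, into each monomial $\vx'^\beta w^i$ coming from the outer division ($i < d_j$, $\beta$ arbitrary in $\N^{n-1}$), the decomposition $g_i = \sum_{\gamma\in\cM_i'} c_\gamma \vx'^\gamma + Q_i$ of its coefficient produces a set of exponents that is again the complement of a monomial ideal in $\N^n$, and that the remainder terms genuinely vanish on $X^{(j)}\cap\Delta$ (the inner remainder $Q_i$ vanishes on $\pi(X^{(j)}_\l)$, hence, pulled back, on $X^{(j)}$, and the outer remainder $q f_j$ vanishes on $X^{(j)}$ by construction). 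A secondary technical point is the uniform-in-$\l$ choice of the Weierstrass polynomials $f_j$: one must arrange, after shrinking $\Delta$ and $K_\Lambda$, that the Weierstrass polydisc works simultaneously for all $\l\in K_\Lambda$ — this follows from the ``sufficiently small'' clause in Lemma~\ref{lem:preparation} applied to the total space $X^{(j)}$ over $\C\times\Lambda$, but it needs to be stated carefully. Everything else — the generic coordinate change, the finiteness of the component decomposition, the compactness reductions — is standard.
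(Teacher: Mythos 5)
Your overall strategy -- divide by a Weierstrass polynomial in one distinguished variable, push the set forward along the resulting finite projection, and recurse on the coefficients -- is the same as the paper's, but two steps as written do not work. First, the dimension bookkeeping driving the recursion is wrong: a finite projection \emph{preserves} dimension, so the image $\pi(X_\l\cap\Delta)\subset\C^{n-1}$ still has dimension $\le k$, not $\le k-1$. Consequently an induction ``on $k$'' has nothing to decrease; the correct induction (and the paper's) is on the ambient dimension $n$ with $k$ held fixed, terminating at the base case $n=k$, where one takes $\cM=\N^n$, the full Taylor expansion, and $Q\equiv0$ with the Cauchy estimates supplying~\eqref{eq:F-decomp-norms}. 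You never identify a base case, and your stated reason that the inductive hypothesis applies to the pushforward is false. (Your worry that the pushforward might not be analytic is, by contrast, a non-issue: on a Weierstrass polydisc the projection is finite, hence proper, and Remmert's proper mapping theorem makes $Y=\pi_n(X\cap D)$ analytic -- this is exactly how the paper proceeds, and it is needed so that the inductive hypothesis, which is a statement about analytic families, applies to $Y$.)

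Second, and more seriously, the component-by-component construction does not produce what the lemma asks for. A decomposition datum built from $f_j\in\cI_{X^{(j)}}$ yields a remainder vanishing only on $X^{(j)}\cap\Delta$, whereas Definition~\ref{def:decomposition} requires $Q$ to vanish on \emph{all} of $X_\l\cap\Delta$; since $\Delta$ is a fixed polydisc, $X_\l\cap\Delta$ will in general meet several components, and your closing claim that ``$X_\l$ near $p$ admits decomposition with respect to the corresponding $\cD_j$'' because one component passes through $p$ is simply false (decomposition with respect to a subset is a \emph{weaker} property, not a stronger one). The paper avoids this entirely by never decomposing into components: since $\dim X_{\l_0}<n$ there is a single $G\in\cI_{X,p}$ with $G\rest{\l=\l_0}\not\equiv0$; dividing by $G$ gives a remainder $qG$ vanishing on all of $X$, the whole of $X\cap D$ is pushed forward at once, and all the coefficients $F_0,\ldots,F_{d-1}$ are decomposed against the \emph{same} inner datum $\hat\cD_i$ for $Y_\l$, so the final co-ideal is the product $\hat\cM_i\times\{0,\ldots,d-1\}$ -- which also disposes of the ``merging'' worry you flag at the end: a union over $i<d$ of \emph{different} co-ideals $\cM_i'\times\{i\}$ need not be a co-ideal, but a single product of co-ideals always is, and its Hilbert--Samuel dimension is still $\dim\hat\cM_i\le k$ because the second factor is finite.
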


\begin{proof}
  Let $\vz$ be a standard coordinate system centered at $p$. We
  proceed by induction on $n$. If $n=k$ then the claim holds with any
  choice of $\vx$, $\cM=\N^n$ and $\Delta=\Delta'$ any polydisc
  contained in $\Omega$. The expansion~\eqref{eq:F-decomp} is given by
  the usual Taylor expansion for $F$ around the origin with
  $Q\equiv0$. The inequality~\eqref{eq:F-decomp-norms} is given by the
  Cauchy estimates.

  Suppose $n>k$. By compactness it will suffice to prove the claim in
  a neighborhood of each $\l\in K_\Lambda$. Fix $\l_0\in K_\Lambda$.
  Since $\dim X_{\lambda_0}<n$ there exists $G\in\cI_{X,p}$ such that
  $G\rest{\l=\l_0}\not\equiv0$. By a unitary change of the
  $z$-coordinates we may suppose that $G$ is regular with respect to
  $z_n$, of some order $d$. Then by Lemma~\ref{lem:preparation} the
  map
  \begin{equation}
    \pi_n: V(G)\to \C^{n-1}\times\tilde\Lambda, \qquad \pi_n(z_1,\ldots,z_n,\lambda)=(z_1,\ldots,z_{n-1},\lambda)
  \end{equation}
  is finite when restricted to an appropriate polydisc
  $D=D_z\times D_\l$, where $D_z=D_h\times D_v$ and $D_h,D_\l$ are
  chosen to be sufficiently smaller than $D_v$. Then
  $Y:=\pi_n(X\cap D)$ is analytic in $D_h\times D_\l$ by the proper
  mapping theorem.
  
  Let $K_{\l_0}\subset D_\l$ be some compact neighborhood of $\l_0$.
  Since $\pi_n:X\to Y$ is finite we have $\dim Y_\l\le\dim X_\l\le k$
  for $\l\in K_{\l_0}$. Apply the inductive hypothesis with $Y$ for
  $X$, $K_{\l_0}$ for $K_\Lambda$ and $D_h$ for $\Omega$ to obtain a
  finite collection of decomposition data $\{\hat\cD_i\}$. We let
  \begin{align}
    \vx_i&:=(\hat\vx_i,z_n), \\
    \Delta_i=\Delta_i'&:=\hat\Delta_i\times D_v, \\
    \cM_i&:=\hat\cM_i\times\{0,\ldots,d-1\}.
  \end{align}
  Note that since $\hat\Delta_i\subset D_h$ and $D_\l$ are chosen to be
  sufficiently smaller than $D_v$, Lemma~\ref{lem:preparation} applies
  with the polydisc $\Delta_i\times D_\l$. Applying the lemma to
  $F(x,\l)\equiv F(x)$ we obtain a decomposition
  \begin{equation}\label{eq:F-x-expand}
    F = \sum_{j=0}^{d-1} z_n^j F_j + QG, \qquad F_j=F_j(z_1,\ldots,z_{n-1},\l)
  \end{equation}
  with $\norm{F_j}_{\Delta_i\times D_\l}= O_{\l_0}(\norm{F}_{\Delta_i})$.

  By construction $Y_\l$ admits decomposition with respect to some
  $\hat\cD_i$. Hence we may decompose the functions
  $F_j(\cdot)\equiv F_j(\cdot,\lambda)$ as
  \begin{equation}\label{eq:Fj-expand}
    F_j = \sum_{\alpha\in\hat\cM_i} c_{j,\alpha} \hat \vx_i^\alpha + Q_j,
    \qquad Q_j\in\cO(\overline{\hat\Delta_i})
  \end{equation}
  where
  \begin{enumerate}
  \item $\hat\cM_i\subset\N^{n-1}$ is a co-ideal and $\dim\hat\cM_i\le k$.
  \item $Q_j$ vanishes identically on $Y_\lambda\cap \hat\Delta_i$.
  \item We have
    \begin{equation}
      \norm{c_{j,\alpha} \hat\vx_i^\alpha}_{\hat\Delta_i}= O_{\l_0}(\norm{F_j}_{\hat\Delta_i})
      = O_{\l_0}(\norm{F}_{\Delta_i}).
    \end{equation}
  \end{enumerate}
  Plugging~\eqref{eq:Fj-expand} into~\eqref{eq:F-x-expand} we obtain
  the decomposition~\eqref{eq:F-decomp}.
\end{proof}

To observe the principal limitation of Lemma~\ref{lem:decomp-pure}
consider the family $X:=\{\l_1x=\l_2\}\subset\C_x\times\C^2$. The
fiber $X_{(0,0)}$ is one-dimensional while every other fiber is
zero-dimensional. We would like to produce decomposition data of
dimension zero for the fibers away from the origin, with constants
remaining uniformly bounded as we approach the origin. However
Lemma~\ref{lem:decomp-pure} only guarantees the existence of
decomposition data of dimension one. The following proposition
eliminates this limitation, producing for each fiber a decomposition
datum of the correct dimension. The idea of the proof is to use
blowings-up to avoid the jump in the dimension of the fiber. For
instance, the reader may observe that in the preceding example, after
blowing up the origin $\{\l_1=\l_2=0\}$ the strict transform
$\tilde X$ has only zero-dimensional fibers.

\begin{Prop}\label{prop:decomp}
  Let $X\subset \Omega\times\Lambda$ be an analytic subset and
  $k\in\N$. Then for any $p\in\Omega$ and compact
  $K_\Lambda\Subset\Lambda$ a there exists a finite collection of
  decomposition data $\{\cD_i\}$ such that:
  \begin{enumerate}
  \item $\Delta_i=\Delta_i'$ is centered at $p$ and contained in $\Omega$.
  \item $\dim\cM_i\le k$.
  \item for every $\l\in K_\Lambda$ the set $(X_\l)^{\le k}$ admits
    decomposition with respect to some $\cD_i$.
  \end{enumerate}
\end{Prop}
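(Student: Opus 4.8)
The plan is to reduce to Lemma~\ref{lem:decomp-pure} by modifying the parameter space via a sequence of blowings-up that eliminates the jumps in fiber dimension. Set $\ell = \dim X_\l$ as a function of $\l$; the locus $\{\l : \dim X_\l > k\}$ is a proper analytic subset $\Sigma_k\subset\Lambda$, since the fiber dimension is upper semicontinuous. The idea is to find a proper modification $\sigma: \tilde\Lambda\to\Lambda$ (a composition of blowings-up with smooth centers supported over $\Sigma_k$) such that, after base change, the strict transform of $X^{\le k}$ has all fibers of dimension $\le k$. More precisely, form $\tilde X := $ the closure in $\Omega\times\tilde\Lambda$ of $(\id\times\sigma)^{-1}(X^{\le k})$ restricted to the part lying over $\tilde\Lambda\setminus\sigma^{-1}(\Sigma_k)$ — i.e. the strict transform. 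I would first argue that such a $\sigma$ exists: the set where the fiber of $\tilde X$ has dimension $>k$ is a proper analytic subset of $\tilde\Lambda$, and by repeatedly blowing up one drives it down; the relevant input is a resolution/flattening statement (Hironaka flattening, or the elementary observation that blowing up the bad locus strictly decreases an appropriate invariant). The example $X=\{\l_1 x=\l_2\}$ after blowing up $\{\l_1=\l_2=0\}$ is exactly the model case, and the general case is handled by the same mechanism applied finitely many times.

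Once $\tilde X$ has all fibers of dimension $\le k$, I would apply Lemma~\ref{lem:decomp-pure} to $\tilde X\subset\Omega\times\tilde\Lambda$ at the point $p$, with a suitable compact $\tilde K_\Lambda := \sigma^{-1}(K_\Lambda)$ (which is compact since $\sigma$ is proper). This yields a finite collection of decomposition data $\{\tilde\cD_i\}$, each with $\Delta_i=\Delta_i'$ centered at $p$, contained in $\Omega$, and with $\dim\cM_i\le k$, such that for every $\tilde\l\in\tilde K_\Lambda$ the fiber $\tilde X_{\tilde\l}$ admits decomposition with respect to some $\tilde\cD_i$. The decomposition data $\tilde\cD_i$ involve only the $\Omega$-coordinates and the co-ideals $\cM_i$, so they make sense as decomposition data for subsets of $\C^n$ without reference to the parameter; I would simply take $\{\cD_i\} := \{\tilde\cD_i\}$.

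The final point is to check that for every $\l\in K_\Lambda$ the fiber $(X_\l)^{\le k}$ admits decomposition with respect to one of the $\cD_i$. Choose any $\tilde\l\in\sigma^{-1}(\l)$. The crucial claim is that $(X_\l)^{\le k}\cap\Delta_i \subset \tilde X_{\tilde\l}\cap\Delta_i$ as a set — or at least that $(X_\l)^{\le k}$ and $\tilde X_{\tilde\l}$ agree outside a lower-dimensional set in a way that preserves the vanishing condition on $Q$. This is where a little care is needed: over $\Sigma_k$ the strict transform can drop components, so one must argue that for a generic (or well-chosen) $\tilde\l\in\sigma^{-1}(\l)$ the fiber $\tilde X_{\tilde\l}$ still contains all of $(X_\l)^{\le k}$; alternatively one covers $\sigma^{-1}(\l)$ by finitely many points whose fibers jointly contain $(X_\l)^{\le k}$, and uses that a function vanishing on each piece vanishes on the union. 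Granting this, if $F\in\cO(\bar\Delta_i)$ and we decompose it using $\tilde\cD_i$ relative to $\tilde X_{\tilde\l}$, then the remainder $Q$ vanishes on $\tilde X_{\tilde\l}\cap\Delta_i \supseteq (X_\l)^{\le k}\cap\Delta_i$, and the norm bound~\eqref{eq:F-decomp-norms} is inherited verbatim; hence $(X_\l)^{\le k}$ admits decomposition with respect to $\cD_i$, as required.

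I expect the main obstacle to be the construction and bookkeeping of the modification $\sigma$: one must ensure not only that fiber dimensions drop to $\le k$ after finitely many blowings-up, but that the strict transform $\tilde X$ retains (over every point of $\Sigma_k$, after passing to all preimages) enough of the low-dimensional part of $X_\l$ so that the vanishing condition transfers back down. The dimension-counting and semicontinuity arguments are routine, but verifying that no relevant component of $(X_\l)^{\le k}$ is lost under the modification — equivalently, that $\bigcup_{\tilde\l\in\sigma^{-1}(\l)} \tilde X_{\tilde\l} \supseteq (X_\l)^{\le k}$ near $p$ — is the delicate step and is where the hypothesis that we only need to handle $(X_\l)^{\le k}$, rather than all of $X_\l$, is essential.
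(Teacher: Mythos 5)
Your overall strategy (blow up the base to kill the jumps in fiber dimension, then invoke Lemma~\ref{lem:decomp-pure}) is the right one, and you have correctly located the delicate point — but your proposed resolution of that point does not work, and this is exactly where the paper's proof differs in an essential way. The ``crucial claim'' that $\bigcup_{\tilde\l\in\sigma^{-1}(\l)}\tilde X_{\tilde\l}\supseteq (X_\l)^{\le k}$ is false in general: the strict transform is by definition the closure of the part of $X$ lying \emph{away} from the preimage of the center, so any component of $X$ supported entirely over the center is simply discarded. For instance, with $k=0$ take $X=\bigl(\{\l_1x=\l_2\}\cap\{|x|<1\}\bigr)\cup\bigl(\{5\}\times\{(0,0)\}\bigr)$; then $(X_{(0,0)})^{\le 0}=\{5\}$, but after blowing up the origin in $\Lambda$ the isolated component over $(0,0)$ disappears from the strict transform, and no choice of (or finite collection of) points $\tilde\l\in\sigma^{-1}(0,0)$ recovers it. There is also a conflation in your setup between $X^{\le k}$ (low-dimensional components of the total space) and the fiberwise object $(X_\l)^{\le k}$: the latter is not the fiber of the former, and the union of the $(X_\l)^{\le k}$ over $\l$ need not be analytic, so ``the strict transform of $X^{\le k}$'' is not the right object to modify. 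Finally, no base modification can make the fibers of a component of $X$ of dimension $\dim\Lambda+d$ (whose fibers are purely $d$-dimensional with $d>k$) have dimension $\le k$; such components must be discarded, and one can only justify discarding them for the \emph{top} fiber dimension $d$, where $(X'_\l)^{\le k}=\emptyset$.

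The paper's proof repairs precisely this by a double induction on $(m,d)=(\dim\Lambda,\max_{\l\in K_\Lambda}\dim X_\l)$ rather than a single flattening followed by one application of Lemma~\ref{lem:decomp-pure}. After localizing at $\l_0$, choosing a finite projection $\pi_d$ of $X_{\l_0}$ to $\C^d$ and a function $G=\sum_\alpha c_\alpha(\l)w^\alpha$ vanishing on the image, one sets $\cC:=V(\<c_\alpha\>)$. The fibers over $\l\in\cC$ are \emph{not} recovered from the strict transform at all: one restricts the whole family to $\cC$, which has dimension $<m$, and concludes by induction on $m$. Only for $\l\notin\cC$ does one pass to the blow-up of $\<c_\alpha\>$, where the fiber $X_\l$ coincides exactly with a fiber of the strict transform (no components are lost, since $\l$ is off the center), and there $G$ factors so that every fiber of the strict transform has dimension $<d$, closing the induction on $d$. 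If you want to salvage your single-modification approach you would have to add a separate mechanism for the parameters lying over the centers; the paper's recursion on $\dim\Lambda$ is that mechanism.
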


\begin{proof}
  Let $m:=\dim\Lambda$ and $d:=\max\{\dim X_\l:\l\in K_\Lambda\}$. We
  proceed by induction on $(m,d)$ with the lexicographic order. By
  compactness it will suffice to prove the claim in a neighborhood of
  each $\l\in K_\Lambda$. Fix $\l_0\in K_\Lambda$. Without loss of
  generality we may replace $X$ by its germ at $(p,\l_0)$ and
  $\Lambda,K_\Lambda$ by their germs at $\l_0$. For a sufficiently
  small germ we have (by semicontinuity of the dimension)
  $d=\dim X_{\l_0}$. If $d\le k$ then the claim follows by
  Lemma~\ref{lem:decomp-pure}, so we assume $d>k$.

  We may assume without loss of generality that $\Lambda$ is smooth.
  Indeed, otherwise let $\sigma:M\to\Lambda$ be a desingularization
  \cite{hironaka:resolution} of $\Lambda$ and
  $\tilde X:=X\times_\Lambda M$. Note that this does not change the
  pair $(m,d)$. Every fiber of $X$ is a fiber of $\tilde X$, and it
  suffices to prove the claim for the compact set
  $\sigma^{-1}(K_\Lambda)$.

  We may also assume without loss of generality that $\dim X<m+d$.
  Indeed, if $X$ has a component $X'$ of dimension $m+d$ then the
  fibers $X'_\l$ must have pure dimension $d$ so
  $(X'_\l)^{\le k}=\emptyset$. Thus it is enough to prove the claim
  for the union of the components of $X$ that have dimension strictly
  smaller than $m+d$.

  Since $d=\dim X_{\l_0}$ there exists an affine linear projection
  $\pi_d:\C^n\to\C^d$ such that
  \begin{equation}
    \pi = \pi_d\times\pi_\Lambda : (X,(p,\l_0))\to (\C^d\times\Lambda,(0,\l_0))
  \end{equation}
  is finite and hence $Y=\pi(X)$ is the germ of an analytic subset at
  $(0,\l_0)$. In particular $\dim Y=\dim X<m+d$ so $Y\neq\C^d\times\Lambda$. Then there
  exists a non-zero $G\in\cI_{Y,(0,\l_0)}$. Write
  \begin{equation}
    G = \sum_\alpha c_\alpha(\l) w^\alpha, \qquad (w,\l)\in\C^d\times\Lambda
  \end{equation}
  and let $I$ be the ideal generated by $\{c_\alpha\}$ in
  $\cO_{\Lambda,\l_0}$. Then the set $\cC:=V(I)\subset\Lambda$ is an
  analytic space of dimension strictly smaller than $m$, and the claim
  follows for any $\l\in\cC$ by induction on $m$. It remains to
  construct suitable decomposition data for any $\l\not\in\cC$.

  Let $\eta:\tilde\Lambda\to\Lambda$ denote the blowing up of $I$ and
  $X':=X\times_\Lambda\tilde\Lambda$. Let
  \begin{equation}
    \tilde X := \clo[X'\setminus (\C^n\times\eta^{-1}(\cC))]
  \end{equation}
  be the strict transform of $X$ (where $\clo$ denotes analytic
  closure). For any $\lambda\in\Lambda\setminus\cC$, the fiber $X_\l$
  is also a fiber of $\tilde X$. Thus it will suffice to prove the
  claim for the family $\tilde X$ and the compact set
  $\eta^{-1}(K_\Lambda)$. Let $\tilde\l\in\tilde\Lambda$ and we will
  show that $\dim\tilde X_{\tilde\l}<d$, and the claim thus follows by
  induction on $d$.

  By definition of the blow-up $\eta$, the ideal
  $I\cO_{\tilde\Lambda,\tilde\l}$ is principal hence generated by some
  $c_\alpha$. Thus we may write $(\id\times\eta)^*G=c_\alpha\tilde G$
  where $\tilde G\in\cO_{\C^d\times\tilde\Lambda,(0,\tilde\l)}$ does
  not vanish identically on $\C^d\times\{\tilde\l\}$. Since
  $I=\<c_\alpha\>$ near $\tilde\l$ the strict transform satisfies
  $\tilde X\subset V((\pi_d\times\id)^*\tilde G)$ and thus
  $\pi_d(\tilde X_{\tilde\l})\subset \C^d\cap\{\tilde G=0\}$. The map
  $\pi_d\rest{\tilde X_{\tilde\l}}$ is finite, being the restriction
  of a finite map $\pi_d\rest{X_\l}$ for some $\l\in\Lambda$, and
  we conclude that $\dim\tilde X_{\tilde\l}<d$ as claimed.
\end{proof}

Finally we finish the proof of Theorem~\ref{thm:decomp}.

\begin{proof}[Proof of Theorem~\ref{thm:decomp}.]
  By compactness there exists a ball $B\subset\C^n$ such that
  $p+B\subset\Omega$ for every $(p,\l)\in K$. Let
  $\Lambda'=\Omega\times\Lambda$ and $K_{\Lambda'}=K$. Define
  \begin{equation}
    X' \subset B\times\Lambda', \qquad X':=\{ (q,(p,\l)) : q+p\in X_\l \}.
  \end{equation}
  By definition, $X'_{(p,\l)}=(-p)+X_\l$ in $B$. Apply
  Proposition~\ref{prop:decomp} to $X',K_{\Lambda'}$ with the point
  $q=0$ to obtain a finite collection of decomposition data
  $\{\cD_i\}$ with $\dim\cD_i\le k$. Let $C_D,C_H$ be the minimum
  among the corresponding parameters $\norm{\cD_i},e(\cD_i)$ and
  choose some $r>0$ such that $B_r(0)\subset\Delta_i$ for every $i$.

  Now let $(p,\l)\in K$. By Proposition~\ref{prop:decomp},
  $(X'_{(p,\l)})^{\le k}$ admits decomposition with respect to some
  $\cD_i$. Define $\cD$ as the $p$-translate of $\cD_i$, i.e.
  $\Delta=p+\Delta_i$ and $\vx=p+\vx_i$. Then
  $(X_\l)^{\le k}=(p+X'_{(p,\l)})^{\le k}$ admits decomposition with
  respect to $\cD$ as claimed.
\end{proof}

\section{Interpolation determinants and rational points}
\label{sec:interpolation-determinants}

Let $A\subset\C^n$ be a ball or polydisc around a point $p\in\C^n$ and
$\delta>0$. We let $A^\delta$ denote the $\delta^{-1}$-rescaling of
$A$ around $p$, i.e. $A^\delta:=p+\delta^{-1}(A-p)$.

Let $X\subset\C^n$ be an analytic subset and $\cD$ a decomposition
datum for $X$, and set $m:=\dim\cM$. We suppose $D$ is a polydisc in
the $\vx$ coordinates, centered at $p$ and $D^\delta\subset\Delta$.

\subsection{Norm estimates}

Below $\norm\cdot$ denotes $\norm\cdot_D$ and $\norm\cdot_\delta$
denotes $\norm\cdot_{D^\delta}$. We remark that
$\norm{\vx^\alpha}=\norm{\vx^\alpha}_\delta \delta^{|\alpha|}$.

\begin{Prop}
  Let $f\in\cO(\bar\Delta')$ and denote $M:=\norm{f}_{\Delta'}$. For
  every $k\in\N$ we have
  \begin{equation}\label{eq:F-R-decomp}
    f = \sum_{\alpha\in\cM^{<k}} m_\alpha(f) + R_k(f) + Q
  \end{equation}
  where $Q\in\cO(\bar\Delta)$ vanishes on $X\cap\Delta$ and
  \begin{equation}
    m_\alpha(f) = c_\alpha \vx^\alpha, \qquad R_k(f)=\sum_{\cM\ni|\alpha|\ge k} c_\alpha \vx^\alpha.
  \end{equation}
  Moreover,
  \begin{equation}\label{eq:F-R-decomp-norms}
    \norm{m_\alpha(f)} \le \norm\cD M \delta^{|\alpha|}, \qquad
    \norm{R_k(f)} \le \frac{\norm\cD e(\cD) L(m,k)}{(1-\delta)^m} M  \delta^k 
  \end{equation}
\end{Prop}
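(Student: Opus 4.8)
The decomposition~\eqref{eq:F-R-decomp} is simply the decomposition~\eqref{eq:F-decomp} furnished by the datum $\cD$, with the finite-degree part split at degree $k$: write $F = \sum_{\alpha\in\cM}c_\alpha\vx^\alpha + Q$, set $m_\alpha(f):=c_\alpha\vx^\alpha$, and collect the tail $R_k(f):=\sum_{\cM\ni|\alpha|\ge k}c_\alpha\vx^\alpha$. The only thing to prove is that this tail converges and satisfies the stated bound; the bound on each $m_\alpha(f)$ is immediate from~\eqref{eq:F-decomp-norms}, which gives $\norm{m_\alpha(f)}_\Delta\le\norm\cD M$, and then rescaling via the identity $\norm{\vx^\alpha}_D=\norm{\vx^\alpha}_{D^\delta}\delta^{|\alpha|}$ (noted just above the Proposition) converts this to $\norm{m_\alpha(f)}_D\le\norm\cD M\delta^{|\alpha|}$, since $D^\delta\subset\Delta$ forces $\norm{\cdot}_{D^\delta}\le\norm{\cdot}_\Delta$.

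For the tail estimate I would first bound $\norm{m_\alpha(f)}_D$ degree by degree: for $|\alpha|=j\ge k$ we have $\norm{m_\alpha(f)}_D\le\norm\cD M\delta^{j}$ as above. Summing over all $\alpha\in\cM$ with $|\alpha|=j$ uses the count $\#\{\alpha\in\cM:|\alpha|=j\}=H_\cM(j)-H_\cM(j-1)\le e(\cD)\,L(m,j)$ from the definition of $e(\cD)$ (here $m=\dim\cM$). Hence
\[
\norm{R_k(f)}_D \le \sum_{j\ge k}\big(H_\cM(j)-H_\cM(j-1)\big)\,\norm\cD M\,\delta^{j}
\le \norm\cD\, e(\cD)\, M \sum_{j\ge k} L(m,j)\,\delta^{j}.
\]
Now $L(m,j)=\binom{m+j-1}{m-1}$ is increasing in $j$ only polynomially, and the elementary generating-function identity $\sum_{j\ge0}L(m,j)t^j=(1-t)^{-m}$ combined with the crude monotonicity $L(m,j)\le L(m,k)\cdot\binom{m+j-1}{m-1}/\binom{m+k-1}{m-1}$ is not quite what gives the clean form stated; instead I would use that $L(m,j)\le L(m,k)\,(1-\delta)^{-j+k}$ is false in general, so the right route is: factor $\delta^k$ out, reindex $j=k+i$, and bound $L(m,k+i)\le L(m,k)\,L(m,i+1)\le L(m,k)\binom{m+i}{m-1}$, then sum $\sum_{i\ge0}\binom{m+i}{m-1}\delta^i\le (1-\delta)^{-m}$ after absorbing the mild discrepancy between $\binom{m+i}{m-1}$ and $\binom{m+i-1}{m-1}$ into the already-present constant. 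This yields $\norm{R_k(f)}_D\le \norm\cD\,e(\cD)\,L(m,k)\,(1-\delta)^{-m}M\,\delta^k$, which is~\eqref{eq:F-R-decomp-norms}.

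**Main obstacle.** The only genuinely non-routine point is the combinatorial tail sum: one must check that $\sum_{j\ge k}L(m,j)\delta^j$ is dominated by $L(m,k)(1-\delta)^{-m}\delta^k$ with no extra factor, i.e. that the polynomial growth of $L(m,j)$ in the tail is exactly compensated by the geometric factor $(1-\delta)^{-m}$. This is a clean inequality — it follows from the convexity/log-concavity of binomial coefficients, or directly from $\sum_{i\ge0}\binom{m-1+i}{m-1}\delta^i=(1-\delta)^{-m}$ together with $L(m,k+i)/L(m,k)\le\binom{m-1+i}{m-1}$ (a Vandermonde-type bound) — but it is the one step where one should be careful about which binomial identity is invoked. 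Everything else is a direct unwinding of Definition~\ref{def:decomposition} and the rescaling remark, so I expect the write-up to be short.
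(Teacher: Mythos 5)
Your proposal is correct and follows the paper's proof essentially verbatim: the decomposition is \eqref{eq:F-decomp} split at degree $k$, the monomial bound comes from \eqref{eq:F-decomp-norms} together with the rescaling identity $\norm{\vx^\alpha}_D=\norm{\vx^\alpha}_{D^\delta}\delta^{|\alpha|}$ and $D^\delta\subset\Delta$, and the tail is summed using the count $H_\cM(j)-H_\cM(j-1)\le e(\cD)L(m,j)$. The only thing to clean up is the detour through $L(m,i+1)=\binom{m+i}{m-1}$, whose generating function is \emph{not} bounded by $(1-\delta)^{-m}$ and cannot be ``absorbed'' if one wants the stated constant exactly; the inequality you correctly state at the end, $L(m,k+i)\le L(m,k)\,L(m,i)$ (true because every monomial of degree $k+i$ factors as a product of one of degree $k$ and one of degree $i$), combined with $\sum_{i\ge0}L(m,i)\delta^i=(1-\delta)^{-m}$, is precisely the paper's route and yields \eqref{eq:F-R-decomp-norms} with no slack.
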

\begin{proof}
  The decomposition~\eqref{eq:F-R-decomp} is just~\eqref{eq:F-decomp}.
  Then~\eqref{eq:F-decomp-norms} gives
  \begin{equation}
     \norm{m_\alpha(f)} = \norm{m_\alpha(f)}_\delta \delta^{|\alpha|}
     \le \norm{m_\alpha(f)}_\Delta \delta^{|\alpha|} \le \norm\cD M \delta^{|\alpha|}
  \end{equation}
  where we used that fact that $D^\delta\subset\Delta$ in the middle
  inequality. Then
  \begin{multline}
    \norm{R_k(f)} \le \sum_{\cM\ni|\alpha|\ge k} \norm\cD M\delta^{|\alpha|}\le
    \norm\cD M e(\cD) \sum_{j=0}^\infty L(m,j+k)\delta^{j+k} \\
    \le \norm\cD e(\cD) M L(m,k)\delta^k \sum_{j=0}^\infty L(m,j)\delta^j \\
    = \frac{\norm\cD e(\cD) L(m,k)}{(1-\delta)^m} M \delta^k.
  \end{multline}
\end{proof}

\subsection{Interpolation determinants}

Let $\vf:=(f_1,\ldots,f_\mu)$ be a collection of functions and
$\vp:=(p_1,\ldots,p_\mu)$ a collection of points. We define the
\emph{interpolation determinant}
\begin{equation}
  \Delta(\vf,\vp) := \det (f_i(p_j))_{1\le i,j\le \mu}.
\end{equation}

In the asymptotic notations $\sim_m,O_m,\Omega_m$ below we use the
subscript $m$ to indicate that the implied constants depend only on
$m$. The following lemma and its proof are direct analogs of the
interpolation determinant estimates of \cite{bombieri-pila}. We remark
that in this paper we will not make explicit use of the estimates for
the constants $C,E$ in terms of $\norm\cD,e(\cD)$.

\begin{Lem}\label{lem:id-upper-bd}
  Assume $m>0$. Suppose $f_i\in\cO(\bar\Delta')$ with
  $\norm{f_i}_{\Delta'}\le M$ and $p_i\in D\cap X$ for
  $i=1,\ldots,\mu$. Assume $\delta<1/2$. Then
  \begin{equation}\label{eq:id-bound}
    \abs{\Delta(\vf,\vp)} \le (C \mu^3 M)^\mu \cdot \delta^{E\cdot\mu^{1+1/m}}
  \end{equation}
  where
  \begin{align}
    C &= O_m(\norm\cD e(\cD)^{1/m}), \\
    E &= \Omega_m(e(\cD)^{-1/m}).
  \end{align}
\end{Lem}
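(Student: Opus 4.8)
The plan is to mimic the Bombieri--Pila interpolation determinant estimate from Lemma~\ref{lem:intro-id-upper-bd-decomp}, but now being careful to track the dependence of the exponents on $\mu$, $m=\dim\cM$, $\norm\cD$ and $e(\cD)$. First I would choose a cutoff degree $k$ and apply the decomposition~\eqref{eq:F-R-decomp} to each $f_i$, writing
\[
  f_i = \sum_{\alpha\in\cM^{<k}} m_\alpha(f_i) + R_k(f_i) + Q_i,
\]
where $Q_i$ vanishes on $X\cap\Delta$ and hence on every $p_j\in D\cap X$, so the $Q_i$ contribute nothing to $\Delta(\vf,\vp)$. Substituting and expanding the determinant multilinearly in the rows, I obtain a sum over all ways of assigning to each row $i$ either a monomial $m_{\alpha_i}(f_i)$ with $\alpha_i\in\cM^{<k}$ or the residue term $R_k(f_i)$. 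As in Lemma~\ref{lem:intro-id-upper-bd}, any summand in which two rows are assigned the \emph{same} monomial $\vx^\alpha$ vanishes (those two rows are then proportional as functions on $\C^n$, restricted to the points), so in a nonzero summand the distinct monomials used must have distinct exponents in $\cM$.

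The combinatorial heart of the estimate is then: in a nonzero summand, if $s$ rows are assigned residue terms $R_k$ and $\mu-s$ rows are assigned distinct monomials $\vx^{\alpha}$ with $\alpha\in\cM^{<k}$, then the total degree $\sum|\alpha_i|$ of the monomial rows is at least the sum of the $\mu-s$ smallest degrees occurring in $\cM$. Using the defining inequality $H_\cM(j)-H_\cM(j-1)\le e(\cD)L(m,j)\asymp e(\cD) j^{m-1}$, the number of monomials of $\cM$ of degree $<j$ is $O_m(e(\cD) j^m)$, so the $t$-th smallest degree is $\Omega_m((t/e(\cD))^{1/m})$, and summing over $t=1,\dots,\mu-s$ gives $\sum|\alpha_i| \ge \Omega_m(e(\cD)^{-1/m}(\mu-s)^{1+1/m})$. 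Combined with the bound $\norm{m_\alpha(f_i)}\le \norm\cD M\delta^{|\alpha|}$ and $\norm{R_k(f_i)}\le \norm\cD e(\cD)L(m,k)(1-\delta)^{-m}M\delta^k$ from~\eqref{eq:F-R-decomp-norms}, together with the Laplace/Hadamard expansion of the determinant (each of the $\mu!$ terms being a product of $\mu$ entries, each entry a value of one of the chosen functions at one of the points, bounded in norm by the corresponding $\norm{m_\alpha(f_i)}$ or $\norm{R_k(f_i)}$), each summand is bounded by
\[
  \mu!\,(\norm\cD M)^\mu \bigl(e(\cD)L(m,k)(1-\delta)^{-m}\bigr)^{s} \delta^{\,k s + \Omega_m(e(\cD)^{-1/m}(\mu-s)^{1+1/m})}.
\]
The number of summands is at most $(H_\cM(k)+1)^\mu = (O_m(e(\cD)k^m))^\mu$ (choosing one of $\le H_\cM(k)$ monomials or the residue for each row). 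Now I choose $k\asymp \mu^{1/m}$: then $H_\cM(k)=O_m(e(\cD)\mu)$, so the count of summands is $(O_m(e(\cD)\mu))^\mu \le (O_m(e(\cD))\cdot\mu)^\mu$, which together with $\mu!\le\mu^\mu$ and harmless constant powers is absorbed into $(C\mu^3 M)^\mu$ for an appropriate $C=O_m(\norm\cD e(\cD)^{1/m})$ — the factor $e(\cD)L(m,k)(1-\delta)^{-m}\le O_m(e(\cD)\cdot\mu^{(m-1)/m})\cdot 2^m$ raised to the $s\le\mu$ power being likewise absorbable into $(C\mu^3 M)^\mu$ since $\delta<1/2$ forces nothing worse. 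For the exponent of $\delta$: if $s\le\mu/2$ then $(\mu-s)^{1+1/m}\ge (\mu/2)^{1+1/m}=\Omega_m(\mu^{1+1/m})$, while if $s>\mu/2$ then $ks\ge k\mu/2\asymp\mu^{1+1/m}$; in either case $ks+\Omega_m(e(\cD)^{-1/m}(\mu-s)^{1+1/m})\ge \Omega_m(e(\cD)^{-1/m}\mu^{1+1/m})$ (using $e(\cD)^{-1/m}\le$ const since $e(\cD)\ge1$, but keeping the $e(\cD)^{-1/m}$ factor to be safe on the $ks$ branch by noting $e(\cD)^{-1/m}\le1$). Taking the maximum over $s$ and multiplying by the number of summands yields~\eqref{eq:id-bound} with $E=\Omega_m(e(\cD)^{-1/m})$.

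The main obstacle I anticipate is the bookkeeping around the optimal choice of $k$ and verifying that \emph{all} of the parasitic factors — the summand count $(H_\cM(k)+1)^\mu$, the residue-norm prefactors $(e(\cD)L(m,k))^s$, the $\mu!$ and $(1-\delta)^{-ms}$ — collapse cleanly into a single $(C\mu^3 M)^\mu$ with $C=O_m(\norm\cD e(\cD)^{1/m})$, while simultaneously the $\delta$-exponent retains the clean form $E\mu^{1+1/m}$ with $E=\Omega_m(e(\cD)^{-1/m})$; in particular one must be careful that the $s>\mu/2$ case (dominated by $ks\asymp \mu^{1+1/m}$ with no $e(\cD)^{-1/m}$ gain) is compatible with asserting the lower bound $E=\Omega_m(e(\cD)^{-1/m})$, which is fine precisely because $e(\cD)\ge1$ so $e(\cD)^{-1/m}\le1$ and $\mu^{1+1/m}\ge e(\cD)^{-1/m}\mu^{1+1/m}$. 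A secondary technical point is the estimate for the $t$-th smallest degree of $\cM$, which requires the eventual-polynomial bound $H_\cM(k)-H_\cM(k-1)\le e(\cD)L(m,k)$ to be used for \emph{all} $k$ (which is exactly how $e(\cD)$ was defined), so no separate handling of small $k$ is needed.
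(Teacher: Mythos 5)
Your argument follows the same route as the paper's proof: decompose each $f_i$ via~\eqref{eq:F-R-decomp} with a degree cutoff $k$, expand the determinant multilinearly, discard summands with a repeated monomial, and control the minimal total $\delta$-degree of a surviving summand using the defining inequality for $e(\cD)$. Your derivation of the exponent $E=\Omega_m(e(\cD)^{-1/m})$ via the two cases $s\le\mu/2$ and $s>\mu/2$ is valid (using $e(\cD)\ge1$). The one step that does not go through as written is the claim that all parasitic factors are absorbed into $(C\mu^3M)^\mu$ with $C=O_m(\norm\cD e(\cD)^{1/m})$. With your choice $k\asymp\mu^{1/m}$, the number of terms per row is $H_\cM(k)+1=O_m(e(\cD)\mu)$ and the residue prefactor is $e(\cD)L(m,k)=O_m(e(\cD)\mu^{1-1/m})$; each carries a full factor $e(\cD)$ per row, so you actually obtain $C=O_m(\norm\cD e(\cD)^2)$. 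For $m\ge2$ this is genuinely larger than the stated $O_m(\norm\cD e(\cD)^{1/m})$, and since $e(\cD)$ and $\mu$ are independent parameters the excess $e(\cD)^{(2-1/m)\mu}$ cannot be hidden inside $\mu^{3\mu}$.

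The fix is the paper's choice of cutoff: take $k$ maximal with $\sum_{l=0}^{k}e(\cD)L(m,l)<\mu$, so that $k\sim_m(\mu/e(\cD))^{1/m}$. This has two payoffs. First, the number of terms per row is then at most $\mu$ by construction, so the summand count is $\mu^\mu$ with no $e(\cD)$ factor, and $e(\cD)L(m,k)=O_m(e(\cD)^{1/m}\mu^{1-1/m})$, which yields exactly the stated $C$. Second, it makes your case split on $s$ unnecessary: with this $k$ every nonzero summand has total $\delta$-exponent at least $S=\sum_{l=0}^{k}e(\cD)L(m,l)\cdot l\sim_m e(\cD)k^{m+1}\sim_m e(\cD)^{-1/m}\mu^{1+1/m}$, regardless of how many rows carry residues, since there simply are not enough low-degree monomials in $\cM$ to fill all $\mu$ rows. (The paper remarks that the explicit dependence of $C,E$ on $\norm\cD,e(\cD)$ is never used later, so your version suffices for all downstream applications; but as a proof of the lemma as stated it needs the corrected choice of $k$.)
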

\begin{proof}
  We set
  \begin{equation}
    k := \max \{ j : \sum_{l=0}^j e(\cD) L(m,l) < \mu \}.
  \end{equation}
  Since $\sum_{l=0}^j L(m,l)=L(m+1,j)$ is a polynomial of degree $m$
  we have $k\sim_m (\mu/e(\cD))^{1/m}$.
  
  We consider the expansions~\eqref{eq:F-R-decomp} for each $f_i$ with
  $k$ as above,
  \begin{equation}\label{eq:fi-exp}
    f_i = \sum_{\alpha\in\cM^{<k}} m_\alpha(f_i) + R_k(f_i) + Q_i.
  \end{equation}
  We note that $Q_i$ vanishes identically on $X\cap\Delta$ and in
  particular at every $p_j$. By definition of $k$, the number of
  remaining terms in~\eqref{eq:fi-exp} does not exceed $\mu$. We
  expand $\Delta(\vf,\vp)$ by linearity with respect to each column.
  We thus obtain a sum of at most $\mu^\mu$ interpolation determinants
  $\Delta_I$ where each $f_i$ is replaced by either a monomial term
  $m_\alpha(f_i)$ or a residue term $R_k(f_i)$.
  By~\eqref{eq:F-R-decomp-norms} we have for $i=1,\ldots,\mu$ and for
  every $\alpha\in\cM^{<k}$
  \begin{equation}\label{eq:m-r-norms-combined}
    \begin{aligned}
      \norm{m_\alpha(f_i)} &\le C_0 \delta^{|\alpha|} \\ \norm{R_k(f_i)} &\le C_0 \delta^k
    \end{aligned}
    \qquad \text{where } C_0 := \frac{\norm\cD e(\cD) L(m,k)}{(1-\delta)^m}M
  \end{equation}
  We remark that these are estimates for the maximum norm in $D$, and
  in particular they bound the absolute value of
  $m_\alpha(f_i),R_k(f_i)$ at every point $p_j$.

  Note that if the same index $\alpha$ is repeated in two different
  columns of $\Delta_I$ then these columns are linearly dependent and
  $\Delta_I\equiv0$. Thus for every non-zero $\Delta_I$ we can have at
  most
  \begin{equation}
    H_\cM(j)-H_\cM(j-1) \le e(\cD) L(m,j)
  \end{equation}
  monomial terms of order $|\alpha|=j$. We now expand $\Delta_I$ by
  the Laplace expansion. By definition of $k$ and
  by~\eqref{eq:m-r-norms-combined} we conclude that for each
  $\Delta_I$ we have
  \begin{equation}
    |\Delta_I| \le \mu! C_0^\mu \delta^S, \qquad S(m,k):=\sum_{l=0}^k e(\cD) L(m,l)\cdot l.
  \end{equation}
  Since $L(m,l)\cdot l$ is a polynomial of degree $m$ in $l$, we
  conclude that $S(m,k)\sim_m e(\cD) k^{m+1}$.

  Plugging in $k\sim_m (\mu/e(\cD))^{1/m}$ we have
  \begin{align}
    C_0 = &O_m(\norm\cD e(\cD)^{1/m} \mu^{1-1/m} M), \\
    S(m,k) &\sim_m e(\cD)^{-1/m} \mu^{1+1/m}.
  \end{align}
  Summing over the (at most) $\mu^\mu$ determinants $\Delta_I$ we
  obtain~\eqref{eq:id-bound}.
\end{proof}

\subsection{Polynomial interpolation determinants}

Let $d\in\N$ and let $\mu$ denote the dimension of the space of
polynomials of degree at most $d$ in $m+1$ variables, $\mu=L(m+2,d)$.
Let $\vf:=(f_1,\ldots,f_{m+1})$ be a collection of functions and
$\vp:=(p_1,\ldots,p_\mu)$ a collection of points. We define the
\emph{polynomial interpolation determinant} of degree $d$ to be
\begin{equation}
  \Delta^d(\vf,\vp) := \Delta(\vg,\vp), \qquad \vg=(\vf^\alpha : \alpha\in\N^{m+1}, |\alpha|\le d).
\end{equation}
Note that $\Delta^d(\vf,\vp)=0$ if and only if there exists a
polynomial of degree at most $d$ in $m+1$ variables vanishing at the
points $\vf(p_1),\ldots,\vf(p_\mu)$.

\begin{Lem}\label{lem:id-lower-bd}
  Let $H\in\N$ and suppose that
  \begin{equation}
    H(f_i(p_j)) \le H \qquad
    \begin{aligned}
      i&=1,\ldots,m+1 \\
      j&=1,\ldots,\mu.
    \end{aligned}
  \end{equation}
  Then $\Delta^d(\vf,\vp)$ either vanishes or satisfies
  \begin{equation}
    \abs{\Delta^d(\vf,\vp)} \ge H^{-(m+1)d\mu}.
  \end{equation}
\end{Lem}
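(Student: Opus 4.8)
The plan is to recognize $\Delta^d(\vf,\vp)$ as the determinant of a $\mu\times\mu$ matrix whose entries are rational numbers of controlled height, and then to invoke the elementary fact that a nonzero integer has absolute value at least $1$. Concretely, the $(i,j)$ entry of the matrix defining $\Delta^d(\vf,\vp)$ is $\vf(p_j)^{\alpha_i}$ for some multi-index $\alpha_i\in\N^{m+1}$ with $|\alpha_i|\le d$, where $\alpha_1,\dots,\alpha_\mu$ enumerate all such multi-indices. Since each coordinate of $\vf(p_j)$ is a rational number of height at most $H$, it can be written with a common denominator $b_j\le H^{m+1}$ (taking the product of the denominators of the $m+1$ coordinates), so each entry $\vf(p_j)^{\alpha_i}$ has denominator dividing $b_j^{|\alpha_i|}$, which divides $b_j^{d}$ and hence divides $H^{(m+1)d}$. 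More precisely the $j$-th \emph{column} has a common denominator dividing $H^{(m+1)d}$.

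Next I would clear denominators column by column: multiplying the $j$-th column of the matrix by $H^{(m+1)d}$ produces a matrix with \emph{integer} entries, and its determinant equals $H^{(m+1)d\mu}\cdot\Delta^d(\vf,\vp)$. This scaled determinant is therefore an integer. If $\Delta^d(\vf,\vp)\neq0$ then this integer is nonzero, hence has absolute value at least $1$, which gives
\begin{equation}
  \abs{\Delta^d(\vf,\vp)} \ge H^{-(m+1)d\mu},
\end{equation}
exactly as claimed.

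There is really no serious obstacle here; the only points requiring a small amount of care are the bookkeeping of denominators — namely that a reduced fraction $a/b$ with $\max(|a|,|b|)\le H$ has denominator at most $H$, so a product of $m+1$ such fractions has denominator dividing a quantity at most $H^{m+1}$, and raising to a power $|\alpha_i|\le d$ keeps the denominator dividing $H^{(m+1)d}$ — and the observation that it suffices to clear denominators one column at a time rather than entry by entry, which is what yields the factor $H^{(m+1)d\mu}$ (one power of $H^{(m+1)d}$ per column, and there are $\mu$ columns). The integrality of the determinant of an integer matrix and the lower bound $1$ for a nonzero integer complete the argument. No analytic input is needed; this is the standard Liouville-type lower bound used in the Bombieri–Pila method, and it is insensitive to the choice of the coordinates $\vx$ or the decomposition datum $\cD$.
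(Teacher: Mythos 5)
Your argument is the same Liouville-type lower bound the paper uses: clear denominators along each row/column of the interpolation matrix, observe that the resulting integer determinant is either zero or at least $1$ in absolute value, and divide back. One small inaccuracy in your write-up: a denominator bounded by $H^{(m+1)d}$ need not \emph{divide} $H^{(m+1)d}$, so multiplying the $j$-th column by $H^{(m+1)d}$ does not in general produce integer entries (e.g.\ denominator $3$ with $H=4$). The correct bookkeeping, which is what the paper does, is to multiply the $j$-th column by the actual product $Q_j:=\prod_i Q_{i,j}^d$ of the $d$-th powers of the coordinate denominators of $\vf(p_j)$; this does clear the denominators, and only \emph{afterwards} does one use $Q_j\le H^{(m+1)d}$ to get $\abs{\Delta^d(\vf,\vp)}\ge\prod_j Q_j^{-1}\ge H^{-(m+1)d\mu}$. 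With that one-line repair your proof coincides with the paper's.
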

\begin{proof}
  Let $Q_{i,j}$ denote the denominator of $f_i(p_j)$ for
  $i=1,\ldots,m+1$ and $j=1,\ldots,\mu$. By assumption $Q_{i,j}\le H$.
  The row corresponding to $p_j$ in $\Delta^d(\vf,\vp)$ consists of
  rational numbers with common denominator dividing
  $Q_j:=\prod_i Q^d_{i,j}$. Factoring out $Q_j$ from each row we obtain
  a matrix with integer entries, whose determinant is either
  vanishing or at least one in absolute value. In the non-vanishing
  case we have
  \begin{equation}
    \abs{\Delta^d(\vf,\vp)} \ge \prod_{j=1}^\mu Q_j^{-1} \ge H^{-(m+1)d\mu}.
  \end{equation}
\end{proof}

Comparing Lemmas~\ref{lem:id-upper-bd} and~\ref{lem:id-lower-bd} we
obtain the following.

\begin{Prop}\label{prop:hypersurface-select}
  Let $M,H\ge2$, and suppose $f_i\in\cO(\bar\Delta')$ with
  $\norm{f_i}_{\Delta'}\le M$. Assume $\delta<1/2$. Let
  \begin{equation}
    Y = \vf(X\cap D) \subset \C^{m+1}.
  \end{equation}
  There exist a constant $C_1>0$ depending only on $m>0$ such that if
  \begin{equation}\label{eq:hypersurface-cond}
    -\log \delta > C_1 \frac{d^{-1}\log (\norm\cD e(\cD))+\log M+\log H}{(d/e(\cD))^{1/m}}
  \end{equation}
  then $Y(\Q,H)$ is contained in an algebraic hypersurface of degree
  at most $d$ in $\C^{m+1}$. The same conclusion holds for $m=0$ if instead
  of~\eqref{eq:hypersurface-cond} we assume $d\ge e(\cD)$.
\end{Prop}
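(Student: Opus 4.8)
The plan is to combine the upper bound of Lemma~\ref{lem:id-upper-bd} with the lower bound of Lemma~\ref{lem:id-lower-bd} applied to the polynomial interpolation determinant $\Delta^d(\vf,\vp)$. First I would set $\mu=L(m+2,d)$, which is the dimension of the space of polynomials of degree $\le d$ in $m+1$ variables, and recall that $\mu\sim_m d^{m+1}$. Given a collection $\vp=(p_1,\ldots,p_\mu)$ of points in $X\cap D$ with $\vf(p_j)\in Y(\Q,H)$ for all $j$, I apply Lemma~\ref{lem:id-upper-bd} to the functions $\vg=(\vf^\alpha:\alpha\in\N^{m+1},|\alpha|\le d)$: each $g_i$ is a product of at most $d$ of the $f_i$'s, hence lies in $\cO(\bar\Delta')$ with $\norm{g_i}_{\Delta'}\le M^d$. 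This yields
\begin{equation}
  \abs{\Delta^d(\vf,\vp)} \le (C\mu^3 M^d)^\mu\cdot\delta^{E\mu^{1+1/m}}
\end{equation}
with $C=O_m(\norm\cD e(\cD)^{1/m})$ and $E=\Omega_m(e(\cD)^{-1/m})$. On the other hand Lemma~\ref{lem:id-lower-bd} says that $\Delta^d(\vf,\vp)$ either vanishes or satisfies $\abs{\Delta^d(\vf,\vp)}\ge H^{-(m+1)d\mu}$.

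The heart of the argument is then to show that the hypothesis~\eqref{eq:hypersurface-cond} forces the upper bound below the lower bound, so that $\Delta^d(\vf,\vp)$ must vanish. Taking logarithms, the non-vanishing alternative would require
\begin{equation}
  -(m+1)d\mu\log H \le \mu\log(C\mu^3 M^d) + E\mu^{1+1/m}\log\delta,
\end{equation}
i.e. (since $\log\delta<0$)
\begin{equation}
  -\log\delta \le \frac{(m+1)d\log H + \log(C\mu^3) + d\log M}{E\mu^{1/m}}.
\end{equation}
Now $\mu^{1/m}\sim_m d^{(m+1)/m}=d^{1+1/m}$, so $d/\mu^{1/m}\sim_m d^{-1/m}$ and $\mu^{1/m}/d\sim_m d^{1/m}=(d/e(\cD))^{1/m}e(\cD)^{1/m}$; using $E^{-1}=O_m(e(\cD)^{1/m})$ the right-hand side is $O_m\big((d^{-1}\log(\norm\cD e(\cD))+\log M+\log H)\big/(d/e(\cD))^{1/m}\big)$, where the $\log(\norm\cD e(\cD))$ term absorbs the contribution $\log(C\mu^3)$ after noting $\log\mu=O_m(\log d)$ is dominated (one may harmlessly assume $d$ is not enormous compared to the other quantities, or absorb it into $C_1$). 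Hence for a suitable $C_1=C_1(m)$, condition~\eqref{eq:hypersurface-cond} contradicts the non-vanishing alternative, so $\Delta^d(\vf,\vp)=0$ for every such $\vp\subset X\cap D$.

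Finally, as recalled right after the definition of $\Delta^d$, the vanishing of $\Delta^d(\vf,\vp)$ for \emph{every} choice of $\mu$ points in $P:=\{p\in X\cap D:\vf(p)\in Y(\Q,H)\}$ means that all the points $\vf(p)$, $p\in P$ — that is, all of $Y(\Q,H)$ — lie on a common algebraic hypersurface of degree at most $d$ in $\C^{m+1}$. (If $\#Y(\Q,H)<\mu$ the statement is trivial, since fewer than $\mu$ points always lie on a degree-$d$ hypersurface; otherwise one picks any $\mu$ of them.) This completes the case $m>0$. For $m=0$ the co-ideal $\cM$ is finite of size $e(\cD)$, and by Example~\ref{ex:decomposition-dim0} the set $X\cap\Delta$ — a fortiori $X\cap D$ — has at most $e(\cD)$ points; thus $Y=\vf(X\cap D)$ has at most $e(\cD)\le d$ points, which automatically lie on a hypersurface of degree at most $d$. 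I expect the main bookkeeping obstacle to be checking that all the $m$-dependent exponents combine correctly — in particular tracking the powers of $\mu$, $d$ and $e(\cD)$ through the logarithmic comparison — but this is a routine estimate once the two lemmas are in hand.
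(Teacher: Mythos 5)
Your proposal is correct and follows essentially the same route as the paper: compare Lemma~\ref{lem:id-upper-bd} (applied to the monomials $\vf^\alpha$ with norms $M^d$) against Lemma~\ref{lem:id-lower-bd}, use $\mu\sim_m d^{m+1}$ to reduce the logarithmic comparison to condition~\eqref{eq:hypersurface-cond}, and handle $m=0$ via Example~\ref{ex:decomposition-dim0}. The only cosmetic difference is that the paper argues by contraposition (non-containment yields $\mu$ points with $\Delta^d\neq0$) while you argue directly from universal vanishing, and your absorption of the $\log(C\mu^3)$ term is exactly the paper's observation that $\log d/d^{1+1/m}=O_m(1)$ combined with $\log M+\log H\ge1$ from $M,H\ge2$.
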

\begin{proof}
  We consider first the case $m=0$. In this case according to
  Example~\ref{ex:decomposition-dim0} the number of points in
  $X\cap\Delta$ is bounded by $e(\cD)$. In particular this bounds the
  number of points in $Y$, and all the more in $Y(\Q,H)$. Thus the
  claim holds with any $d\ge e(\cD)$.
  
  Now assume $m>0$ and suppose toward contradiction that $Y(\Q,H)$ is
  not contained in an algebraic hypersurface of degree at most $d$ in
  $\C^{m+1}$. Then by standard linear algebra it follows that there
  exist $\vp=p_1,\ldots,p_\mu\in X\cap D$ such that
  $\{\vf(p_j):j=1,\ldots,\mu\}$ is a subset of $Y(\Q,H)$ and does not
  lie on the zero locus of a non-zero polynomial of degree $d$. Then
  $\abs{\Delta^d(\vf,\vp)}\neq0$, and from
  Lemmas~\ref{lem:id-upper-bd} and~\ref{lem:id-lower-bd} we have
  \begin{equation}
    H^{-(m+1)d\mu}\le \abs{\Delta^d(\vf,\vp)} \le (C \mu^3 M^d)^\mu \cdot \delta^{E\cdot\mu^{1+1/m}}.
  \end{equation}
  Takings logs and using $\mu\sim_m d^{m+1}$ we have
  \begin{multline}
    (d/e(\cD))^{1/m}d \log\delta \ge\\-\Omega_m\big(
    \log \norm\cD+m^{-1}\log e(\cD)+3(m+1)\log d +d \log M+(m+1)d\log H \big).
  \end{multline}
  Noting that
  \begin{equation}
    \frac{\log d}{d^{1+1/m}} = O_m(1)
  \end{equation}
  and collecting all asymptotic constants into $C_1$ we arrive to
  contradiction with~\eqref{eq:hypersurface-cond}.
\end{proof}

\begin{Cor}\label{cor:hypersurface-select}
  Suppose $f_i\in\cO(\bar\Delta')$ with $\norm{f_i}_{\Delta'}\le M$.
  Let
  \begin{equation}
    Y = \vf(X\cap D) \subset \C^{m+1}.
  \end{equation}
  For every $\e>0$ there exist two positive constants
  \begin{align}
     d&=d(m,\e,e(\cD)) \\
     C&=C(m,\e,e(\cD),\norm\cD,M)
  \end{align}
  such that if $\delta\le CH^{-\e}$ then $Y(\Q,H)$ is contained in an
  algebraic hypersurface of degree at most $d$ in $\C^{m+1}$.
\end{Cor}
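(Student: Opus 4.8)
The plan is to deduce Corollary~\ref{cor:hypersurface-select} from Proposition~\ref{prop:hypersurface-select} by choosing the degree $d$ large enough (depending only on $m$, $\e$ and $e(\cD)$) that the condition~\eqref{eq:hypersurface-cond} becomes implied by a bound of the form $\delta\le CH^{-\e}$. The case $m=0$ is immediate: simply take $d=d(\e,e(\cD)):=\lceil e(\cD)\rceil$, and then Proposition~\ref{prop:hypersurface-select} gives the conclusion for \emph{any} $\delta<1/2$; in particular it holds with $C=1/2$ (or any $C<1/2$) and any $H\ge1$, so $\delta\le CH^{-\e}$ suffices.

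For $m>0$ the idea is that the left-hand side of~\eqref{eq:hypersurface-cond} is $-\log\delta\ge \e\log H - \log C$, while the right-hand side, for fixed $d$, is at most $c_1(d,e(\cD),\norm\cD,M)+c_2(d,e(\cD))\log H$ for explicit constants; more precisely the coefficient of $\log H$ on the right is $C_1 (d/e(\cD))^{-1/m}$, which tends to $0$ as $d\to\infty$. So first I would fix $d=d(m,\e,e(\cD))$ to be any integer large enough that
\begin{equation}
  C_1\,(d/e(\cD))^{-1/m} \le \tfrac12\,\e,
\end{equation}
where $C_1=C_1(m)$ is the constant from Proposition~\ref{prop:hypersurface-select}; this is possible since the left side is decreasing to $0$ in $d$. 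With this $d$ fixed, the right-hand side of~\eqref{eq:hypersurface-cond} is bounded above by
\begin{equation}
  C_1\,(d/e(\cD))^{-1/m}\big(d^{-1}\log(\norm\cD e(\cD))+\log M+\log H\big)
  \le \tfrac12\e\log H + A,
\end{equation}
where $A=A(m,\e,e(\cD),\norm\cD,M)$ collects the remaining ($H$-independent) terms, again using $M\ge2$ so that $\log M>0$ absorbs cleanly. Hence~\eqref{eq:hypersurface-cond} holds as soon as $-\log\delta\ge \tfrac12\e\log H+A$, i.e. as soon as $\delta\le e^{-A}H^{-\e/2}$.

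To get the exponent $-\e$ claimed in the statement rather than $-\e/2$, I would simply apply the previous paragraph with $\e$ replaced by $2\e$ throughout: choose $d=d(m,2\e,e(\cD))$ with $C_1(d/e(\cD))^{-1/m}\le \e$, and set $C=C(m,\e,e(\cD),\norm\cD,M):=e^{-A}$ with the corresponding $A$. Then $\delta\le CH^{-\e}$ forces $-\log\delta\ge \e\log H - \log C = \e\log H + A$, which (after the relabeling) is exactly the hypothesis~\eqref{eq:hypersurface-cond}, and Proposition~\ref{prop:hypersurface-select} yields an algebraic hypersurface of degree at most $d$ containing $Y(\Q,H)$. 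One should also shrink $C$ if necessary so that $C\le 1/2$, guaranteeing the standing assumption $\delta<1/2$ of Proposition~\ref{prop:hypersurface-select}; since $H\ge1$ this is automatic once $C\le1/2$. There is no real obstacle here — the only mild point to keep straight is the bookkeeping of which parameters each constant is allowed to depend on (in particular that $d$ depends on $e(\cD)$ but not on $\norm\cD$ or $M$, exactly as in the displayed dependencies of the statement), and the trivial handling of the degenerate $m=0$ case separately.
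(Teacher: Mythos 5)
Your proof is correct and follows essentially the same route as the paper: both deduce the corollary from Proposition~\ref{prop:hypersurface-select} by choosing $d$ large enough that the coefficient $C_1(d/e(\cD))^{-1/m}$ of $\log H$ in~\eqref{eq:hypersurface-cond} drops below $\e$, and then choosing $C$ small enough to absorb the remaining $H$-independent terms (and the degenerate case $m=0$ is handled separately via $d\ge e(\cD)$ exactly as in the paper). Your extra bookkeeping (the $\e/2$ relabeling, shrinking $C\le 1/2$) is harmless and slightly more careful than the paper's one-line version.
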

\begin{proof}
  By Proposition~\ref{prop:hypersurface-select} for $m>0$ it is enough
  to choose
  \begin{gather}
    \e > \frac{C_1}{(d/e(\cD))^{1/m}}, \\
    -\log C > \frac{C_1 d^{-1}\log(\norm\cD e(\cD) M)}{(d/e(\cD))^{1/m}},
  \end{gather}
  and for $m=0$ it is enough to choose $d=e(\cD)$ and e.g. $C=1$.
\end{proof}

\section{Exploring rational points}
\label{sec:exploring}

We begin with a definition.

\begin{Def}\label{def:xw}
  Let $X\subset\C^m$ and $W\subset\C^m$ be two sets. We define
  \begin{equation}
    X(W) := \{ w\in W : W_w \subset X \}
  \end{equation}
  to be the set of points of $W$ such that $X$ contains the germ of
  $W$ around $w$, i.e. such that $w$ has a neighborhood
  $U_w\subset\C^m$ such that $W\cap U_w\subset X$.
\end{Def}

If $A\subset\C^n$ we denote by $A_\R:=A\cap\R^n$. We
remark that
\begin{equation}\label{eq:alg-part-R}
  (A(W))_\R\subset(A_\R)(W_\R).
\end{equation}
We will consider Definition~\ref{def:xw} in two cases: for
$X\subset\C^m$ locally analytic and $W\subset\C^m$ an algebraic
variety, and for $X\subset\R^m$ subanalytic and $W\subset\R^m$ a
semi-algebraic set.

Our principal motivation for Definition~\ref{def:xw} is the following
direct consequence (cf. Theorem~\ref{thm:subanalytic-main}).
\begin{Lem}\label{lem:alg-part-trans}
  Let $S\subset\R^m$ be a connected positive-dimensional semialgebraic
  set and $A\subset\R^m$. Then $A(S)\subset A^\alg$.
\end{Lem}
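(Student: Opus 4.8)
The plan is to show directly that every point $w\in A(S)$ lies on some connected positive-dimensional semialgebraic subset of $A$, which is exactly what membership in $A^\alg$ requires. The only input from semialgebraic geometry that I need is standard: a semialgebraic set has finitely many connected components, each of which is semialgebraic, and a semialgebraic set has dimension zero if and only if it is finite.

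First I would record the elementary fact that a connected semialgebraic set of positive dimension has no isolated points: if $w$ were isolated in $S$ then $\{w\}$ would be open and closed in $S$, hence a connected component, and connectedness of $S$ would force $S=\{w\}$, contradicting $\dim S\ge 1$. Now fix $w\in A(S)$. By Definition~\ref{def:xw} there is an open neighborhood $U$ of $w$ with $S\cap U\subset A$; shrinking $U$ to an open ball we may assume $T:=S\cap U$ is semialgebraic, and still $T\subset A$. Since $w$ is not isolated in $S$, every neighborhood of $w$ contained in $U$ meets $S\setminus\{w\}$, so $w$ is not isolated in $T$ either.

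Finally, let $C$ be the connected component of $w$ in $T$. Then $C$ is semialgebraic, connected, and $C\subset T\subset A$. If $C=\{w\}$ then $C$ would be open in $T$, contradicting that $w$ is not isolated in $T$; hence $C$ contains at least two points, is therefore infinite, and so $\dim C\ge 1$. Thus $C$ is a connected positive-dimensional semialgebraic subset of $A$ containing $w$, whence $w\in A^\alg$. Since $w\in A(S)$ was arbitrary, $A(S)\subset A^\alg$.

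I expect the only point requiring care to be the bookkeeping around isolated points — that is, making sure the component $C$ extracted at $w$ is genuinely positive-dimensional rather than a stray point of $A$ that merely happens to lie in $S$. Once the hypotheses that $S$ is connected and positive-dimensional are invoked to preclude that degenerate case, the argument is immediate from the basic structure theory of semialgebraic sets, and no use of the more delicate machinery (curve selection, triangulation) is needed.
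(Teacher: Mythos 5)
Your proof is correct; the paper states this lemma without proof, treating it as a direct consequence of Definition~\ref{def:xw}, and your argument is exactly the routine verification one would supply (using that semialgebraic sets have finitely many connected components, each semialgebraic and clopen, and that zero-dimensional semialgebraic sets are finite). The care you take to rule out the degenerate case $C=\{w\}$ via the non-isolation of $w$ in $S$ is precisely the point that makes the ``positive-dimensional'' hypothesis on $S$ do its work.
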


We record some simple consequences.
\begin{Lem}\label{lem:alg-part-rules}
  Let $A,B,W\subset\C^m$. Then
  \begin{equation}
    A(W)\cup B(W)\subset(A\cup B)(W).
  \end{equation}
  If $A\subset B$ is relatively open then
  \begin{equation}
    B(W)\cap A = A(W).
  \end{equation}
\end{Lem}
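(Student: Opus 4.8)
The claim to prove is Lemma~\ref{lem:alg-part-rules}, consisting of two set-theoretic assertions about the operator $A(W) := \{ w\in W : W_w\subset A\}$.

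\medskip

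\textbf{Proof plan.} Both statements are purely formal consequences of unwinding Definition~\ref{def:xw}, so the plan is to argue directly from the definition with no extra machinery.

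For the first inclusion, the plan is: take $w\in A(W)\cup B(W)$. Then $w\in W$, and either $W_w\subset A$ or $W_w\subset B$; in either case $W_w\subset A\cup B$, so $w\in (A\cup B)(W)$. That is the whole argument — a one-line check. (One should note it is genuinely only an inclusion, not an equality, since a germ of $W$ at $w$ could lie in $A\cup B$ without lying entirely in either $A$ or $B$; but that is not needed.)

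For the second identity, assume $A\subset B$ is relatively open, meaning $A = B\cap U$ for some open $U\subset\C^m$ (equivalently, $A$ is open in the subspace topology of $B$). I would prove the two inclusions of $B(W)\cap A = A(W)$ separately. For ``$\supseteq$'': if $w\in A(W)$ then $w\in W$ and $W_w\subset A\subset B$, so $w\in B(W)$; moreover $W_w\subset A$ forces $w\in A$ (the germ $W_w$ contains $w$ itself, as $w\in W$), so $w\in B(W)\cap A$. For ``$\subseteq$'': suppose $w\in B(W)\cap A$. Then $w\in W$ with a neighborhood $U_w$ such that $W\cap U_w\subset B$, and $w\in A$. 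Since $A$ is relatively open in $B$, there is an open $V\ni w$ with $B\cap V\subset A$; then on the neighborhood $U_w\cap V$ we have $W\cap(U_w\cap V)\subset B\cap V\subset A$, which shows $W_w\subset A$, i.e. $w\in A(W)$. Shrinking the neighborhood to $U_w\cap V$ is the only slightly substantive move, and it uses exactly the relative openness hypothesis.

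\medskip

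\textbf{Main obstacle.} There is essentially no obstacle here: the statement is a bookkeeping lemma, and the only point requiring a moment's care is getting the relative-openness hypothesis to do its job in the ``$\subseteq$'' direction of the second identity — one must intersect the neighborhood witnessing $w\in B(W)$ with a neighborhood witnessing $A = B\cap V$ locally at $w$. Everything else is immediate from the definitions.
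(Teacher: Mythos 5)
Your proof is correct and is exactly the routine definition-unwinding the paper has in mind; the paper in fact states Lemma~\ref{lem:alg-part-rules} without proof as a "simple consequence" of Definition~\ref{def:xw}. The one point that needs any care — intersecting the neighborhood witnessing $w\in B(W)$ with a neighborhood on which $B\cap V\subset A$, supplied by relative openness — is handled properly.
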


\subsection{Projections from admissible graphs}
\label{sec:admissible-graph}

Let $\Omega_z\subset\C^m,\Omega_w\subset\C^n$ be domains and set
$\Omega:=\Omega_z\times\Omega_w\subset\C^{m+n}$. Let $\Lambda$ be an
analytic space. We denote by $\pi_z,\pi_w,\pi_\Lambda$ the projections
from $\Omega\times\Lambda$ to $\Omega_z,\Omega_w,\Lambda$
respectively. We denote by
$\pi:\Omega\times\Lambda\to\Omega_z\times\Lambda$ the projection
$\pi=\pi_z\times\pi_\Lambda$.

Let $U\subset\Omega_z\times\Lambda$ be an open subset and
$\psi:U\to\Omega_w$ a function, and denote its graph by
\begin{equation}
  \Gamma_\psi := \{ (z,w,\l)\in\Omega\times\Lambda : \psi(z,\l)=w \}.
\end{equation}
We denote by $\tilde\psi:U\to\Gamma_\psi$ the map
$(z,\l)\to(z,\psi(z,\l),\l)$.

\begin{Def}\label{def:admissible-graph}
  We say that $\psi:U\to\Omega_w$ is \emph{admissible} if
  $\Gamma=\Gamma_\psi$ is relatively compact in $\Omega\times\Lambda$,
  and if there exists an analytic subset
  $X_\Gamma\subset\Omega\times\Lambda$ which agrees with $\Gamma$ over
  $U$, i.e. $X_\Gamma\cap\pi^{-1}(U)=\Gamma$.
\end{Def}

\subsection{Rational points on admissible projections}

For the remainder of this section we fix an admissible
$\phi:U\to\Omega_w$. Our main result in this section is the following
theorem.

\begin{Thm}\label{thm:exploring-projection}
  Let $X\subset\Omega\times\Lambda$ be an analytic family. Set
  \begin{equation}
    Y:=\pi(X\cap\Gamma)\subset\Omega_z\times\Lambda.
  \end{equation}
  Let $\e>0$. There exist constants $d=d(\Gamma,\e)$ and
  $N= N(\Gamma,\e)$ with the following property. For any $\l\in\Lambda$
  and any $H\in\N$ there exist at most $N H^\e$ many irreducible
  algebraic varieties $V_\alpha\subset\C^m$ with $\deg V_\alpha\le d$
  such that
  \begin{equation}
    Y_\l(\Q,H)\subset \bigcup_\alpha Y_\l(V_\alpha).
  \end{equation}
\end{Thm}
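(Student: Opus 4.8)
The strategy is an induction on $\dim X$, following the Bombieri--Pila--Wilkie scheme of Section~\ref{sec:bpw-higher-dim} but with the uniform decomposition Theorem~\ref{thm:decomp} playing the role of $C^r$-reparametrization. First I would fix a compact $K\Subset\Omega\times\Lambda$ exhausting the relevant part of $\Gamma$; since $\Gamma_\psi$ is relatively compact and $X_\Gamma$ agrees with $\Gamma$ over $U$, it suffices to work with the analytic family $X'=X\cap X_\Gamma$ inside $\Omega\times\Lambda$. The key coordinates are $(z,w)$ on $\Omega_z\times\Omega_w$; the admissible graph condition means that on $X'\cap\pi^{-1}(U)$ the coordinate $w$ is a function of $(z,\l)$, so that $\pi:X'\cap\Gamma\to Y$ is a bijection over $U$ and $\vf=(z_1,\dots,z_m)$ separates points appropriately. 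Apply Theorem~\ref{thm:decomp} to $X'$ and $K$ with the bound $k=\dim X'\le\dim\Omega_z+\dim\Lambda$ (or better, stratify by the fiber dimension $\dim X'_\l$), obtaining a uniform radius $r>0$ and a decomposition datum $\cD$ at every $(p,\l)\in K$ with $\dim\cD=\dim (X'_\l)^{\le k}$, $\norm\cD\le C_D$, $e(\cD)\le C_H$, and $B_r(p)\subset\Delta$.

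Next, fix $\l\in\Lambda$ and $H\in\N$. Cover the $z$-projection of the compact piece of $\Gamma_\l$ by finitely many (a bounded number, independent of $\l,H$) of the polydiscs $\Delta_p$ of radius $\ge r$ supplied above. Inside each such $\Delta_p$, subdivide into roughly $H^{m\e/2}$ subpolydiscs $D_k$ of polyradius $\delta\le C H^{-\e/2}$, where $C=C(m,\e,e(\cD),\norm\cD,M)$ and $M$ bounds the relevant coordinate functions $\vf$ on $\Delta'$ (uniform by compactness). By Corollary~\ref{cor:hypersurface-select} applied with $X=X'_\l$, $\vf$ the first $m+1$ coordinate-type functions on the set, and the polydisc $D_k^\delta\subset\Delta$, for each $D_k$ the set $\vf((X'_\l)^{\le k}\cap D_k)(\Q,H)$ lies on a single algebraic hypersurface of degree $\le d=d(m,\e,e(\cD))$. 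Translating back through $\pi$, each $Y_\l(\Q,H)$ arising from a point of $X'_\l$ whose germ sits in $D_k$ lands on such a hypersurface; summing over the $O(H^\e)$ subpolydiscs and the bounded number of $\Delta_p$ gives $N(\Gamma,\e)H^\e$ hypersurfaces of degree $\le d$ covering $Y_\l(\Q,H)$. Decomposing each hypersurface into its $\le d$ irreducible components and absorbing the factor $d$ into $N$ yields the $N H^\e$ irreducible varieties $V_\alpha$ of degree $\le d$.

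The remaining — and genuinely delicate — point is to upgrade ``$Y_\l(\Q,H)\subset\bigcup V_\alpha$'' to ``$Y_\l(\Q,H)\subset\bigcup Y_\l(V_\alpha)$'', i.e. to show that each rational point of $Y_\l$ in fact lies in the \emph{algebraic part} carried by some $V_\alpha$ relative to $Y_\l$. This is where the induction on dimension enters: one intersects $X'$ (as a family, adding the coefficients of the candidate hypersurfaces as extra parameters, cf. the discussion in Section~\ref{sec:bpw-higher-dim}) with the universal hypersurface of degree $d$ to form a new analytic family $\hat X\subset\hat\Omega\times\hat\Lambda$ of strictly smaller fiber dimension over its base, apply the inductive hypothesis of the theorem to $\hat X$, and thereby resolve each $Y_\l\cap V_\alpha$ into lower-dimensional pieces; points that fall into a positive-dimensional component of $Y_\l\cap V_\alpha$ are accounted for by $Y_\l(V_\alpha)$ via Lemmas~\ref{lem:alg-part-trans} and~\ref{lem:alg-part-rules}, while those that do not are pushed down the induction. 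The base case $\dim\cD=0$ is handled directly by Example~\ref{ex:decomposition-dim0} and the $m=0$ clause of Corollary~\ref{cor:hypersurface-select}, which bound $\#(X'_\l\cap\Delta)$ and hence $\#Y_\l(\Q,H)$ by a constant.

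\textbf{Main obstacle.} I expect the bookkeeping of the induction — setting up the enlarged parameter space $\hat\Lambda$ so that the intersections $Y_\l\cap V_\alpha$ genuinely appear as fibers of a single admissible family of strictly smaller complexity, with the constants $d,N$ staying uniform — to be the crux; the determinant estimates and the covering argument are, by contrast, a routine transcription of Bombieri--Pila via the machinery of Sections~\ref{sec:uniform-decomposition} and~\ref{sec:interpolation-determinants}. A secondary subtlety is verifying that admissibility (relative compactness of $\Gamma_\psi$ together with the extending analytic set $X_\Gamma$) is preserved when one passes from $X$ to the intersection with a universal hypersurface, so that Definition~\ref{def:admissible-graph} continues to hold at each stage of the recursion.
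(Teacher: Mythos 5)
Your high-level architecture matches the paper's: replace $C^r$-parametrization by the uniform decomposition data of Theorem~\ref{thm:decomp}, cover a compact piece of $\Gamma_\l$ by $O(H^{\e'})$ small polydiscs, apply Corollary~\ref{cor:hypersurface-select} in each, and recurse through a universal family over a compact parameter space of algebraic varieties. But the step you flag as the crux is where your plan has a genuine gap, in two places. First, your claim that ``points that fall into a positive-dimensional component of $Y_\l\cap V_\alpha$ are accounted for by $Y_\l(V_\alpha)$'' is false: by Definition~\ref{def:xw}, $Y_\l(V_\alpha)$ consists of points where the germ of the \emph{whole} variety $V_\alpha$ is contained in $Y_\l$; a positive-dimensional curve inside a two-dimensional $V_\alpha\cap Y_\l$ contributes nothing to $Y_\l(V_\alpha)$. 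Such components must instead be re-output as new varieties in the list $\{V_\alpha\}$ and fed back into the recursion. Second, your assertion that intersecting with the universal hypersurface produces a family $\hat X$ of ``strictly smaller fiber dimension'' is unjustified and fails precisely at the points of $Y_\l(V_\alpha)$, where the fiber is locally contained in the hypersurface. These two defects are two faces of the same issue, and the paper's proof is organized exactly around resolving it.

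Concretely, the paper inducts on $\dim W$ for an auxiliary irreducible algebraic $W\subset\C^m$ (Proposition~\ref{prop:exploring-family} and Lemma~\ref{lem:exploring-induction}), not on $\dim X$. It parametrizes all cycles of dimension $k=\dim W$ and degree $\deg W$ by the compact Chow variety $\cC$, applies Theorem~\ref{thm:decomp} to the enlarged family over $\Lambda\times\cC$ with the truncation $(X_\l\cap(W\times\Omega_w))^{<k}$, and then uses a pointwise dichotomy: at $q\in Y_\l\cap W$ either the germ of $W$ lies in $Y_\l$ (so $q\in Y_\l(W)$, and $W$ itself is placed in the output list), or the intersection drops dimension locally and $q$ lands in the truncated set, to which the dimension-$(<k)$ decomposition data and the interpolation estimate apply. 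The recursion then passes to the irreducible components of $W\cap H_\alpha$, which have dimension exactly $\dim W-1$ because the coordinates $\vf$ are chosen dominant on $W$ so that no produced hypersurface contains $W$ --- another point your sketch omits and which is essential for the induction to terminate. Finally, your polydisc count ($H^{m\e/2}$ polydiscs of polyradius $H^{-\e/2}$ in $\C^m$) would overshoot $H^\e$ for $m\ge 3$; the paper fixes this by taking $\e'=\e/(n+m)$ from the start. Your plan is repairable, but repairing it essentially forces you into the paper's $W$-indexed induction with the germ-containment dichotomy; the ``strictly smaller fiber dimension'' shortcut does not survive.
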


We begin the proof of Theorem~\ref{thm:exploring-projection} with the
following proposition.

\begin{Prop}\label{prop:exploring-family}
  Let $X\subset\Omega\times\Lambda$ be an analytic family
  and set
  \begin{equation}
    Y:=\pi(X\cap\Gamma)\subset\Omega_z\times\Lambda.
  \end{equation}
  Let $W\subset\C^m$ be an irreducible algebraic variety.

  Let $\e>0$. There exist constants $d = d(\Gamma,\e,\deg W)$ and
  $N = N(\Gamma,\e,\deg W)$ with the following property. For any
  $\l\in\Lambda$ and any $H\in\N$ there exist $N H^\e$ hypersurfaces
  $H_\alpha\subset\C^m$ with $\deg H_\alpha\le d$ such that
  $W\not\subset H_\alpha$ and
  \begin{equation}
    (Y_\l\cap W)(\Q,H)\subset Y_\l(W)\cup \bigcup_\alpha H_\alpha.
  \end{equation}
\end{Prop}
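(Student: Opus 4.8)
\textbf{Plan for the proof of Proposition~\ref{prop:exploring-family}.}
The strategy is to run the Bombieri--Pila argument from~\secref{sec:bpw-curves}, but with the holomorphic decomposition data of~\secref{sec:uniform-decomposition} playing the role of the analytic parametrizations. First I would dispose of the case $\l\notin\pi_\Lambda(X\cap\Gamma)$, in which $Y_\l$ is empty and there is nothing to prove, so we may assume $(z_0,\l)\in\pi(X\cap\Gamma)$ for some $z_0$. Fix $\l$. The set $Y_\l$ is the image under $\pi_z$ of $X_\l\cap\Gamma_\l$; since $\Gamma$ is the graph of $\psi$, composing with $\tilde\psi^{-1}=\pi$ identifies $Y_\l$ with a fiber of the analytic set $X\cap X_\Gamma$. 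Set $\e'=\e/2$. Apply Theorem~\ref{thm:decomp} to the analytic set $X\cap X_\Gamma$ inside $\Omega\times\Lambda$, with $k:=m$ (or, more carefully, $k:=\dim W$ after intersecting — see below) and with $K$ a compact subset of $\Omega\times\Lambda$ chosen so that $\overline{\Gamma}\subset K$ (possible since $\Gamma$ is relatively compact by admissibility): this produces a uniform radius $r>0$ and uniform bounds $C_D,C_H$, so that for each $(p,\l)$ there is a decomposition datum $\cD$ for $(X\cap X_\Gamma)_\l=X_\l\cap\Gamma_\l$ with $\Delta$ centered at $p$, $B_r(p)\subset\Delta$, $\norm\cD\le C_D$, $e(\cD)\le C_H$ and $\dim\cD\le k$. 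By compactness of $\overline{\Gamma}$ we may cover $\overline{\Gamma}$ (hence $\Gamma_\l$) by finitely many — say $N_0$, a number depending only on $\Gamma$ — balls $B_{r/2}(p_i)$ with the associated data $\cD_i$.

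Next, on each such $\Delta=\Delta_i$ I would run the hypersurface-selection machinery of~\secref{sec:interpolation-determinants}. The natural map to apply it to is $\vf=(f_1,\ldots,f_{m+1})$ where $f_1,\ldots,f_m$ are the affine coordinates $z_1,\ldots,z_m$ on $\Omega_z$ and $f_{m+1}$ is a defining function for $W$ — or, to produce hypersurfaces not containing $W$, one works relative to $W$: restrict attention to points of $X\cap\Delta$ lying over $W$, i.e. consider $X\cap\Gamma\cap\pi^{-1}(W\times\Lambda)$, whose $\l$-fiber maps under $\pi_z$ onto $Y_\l\cap W$. Feed the coordinate functions $z_1,\ldots,z_m$ (pulled back via $\tilde\psi$, hence holomorphic and bounded on $\overline\Gamma$, with a uniform bound $M=M(\Gamma)$) into Corollary~\ref{cor:hypersurface-select}: it yields $d=d(m,\e',e(\cD))\le d(m,\e',C_H)=:d(\Gamma,\e,\deg W)$ and $C=C(m,\e',C_H,C_D,M)$ such that, whenever $\delta\le C H^{-\e'}$, the set $(\vf(X_\l\cap W\cap D))(\Q,H)$ — which contains $(Y_\l\cap W)(\Q,H)$ after projecting away the last coordinate — lies on a single algebraic hypersurface of degree $\le d$. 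Here one must also arrange that the selected hypersurface does not contain $W$: this is where the last coordinate $f_{m+1}$ (a defining equation of $W$) enters, or alternatively one intersects the candidate hypersurface with $W$ and, if $W\subset H_\alpha$, discards that point into the "germ" part $Y_\l(W)$; more precisely, a point $y\in (Y_\l\cap W)(\Q,H)$ either lies on a hypersurface $H_\alpha\not\supset W$ produced by the determinant argument, or the local argument forces the germ $W_y$ itself into $Y_\l$, giving $y\in Y_\l(W)$ — this dichotomy is exactly the content of the Bombieri--Pila "all points on one hypersurface" conclusion combined with Lemma~\ref{lem:alg-part-rules}.

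The counting then goes as follows. For fixed $H$, subdivide each polydisc $\Delta_i$ (of polyradius bounded below by $r$, hence of bounded size) into $O((r/\delta)^n)=O(H^{n\e'})$ sub-polydiscs $D$ of polyradius $\delta:= C H^{-\e'}$; taking $\e'=\e/(2n)$ in place of $\e/2$ (a harmless renaming of constants) makes the total number of sub-polydiscs, summed over the $N_0$ charts, at most $N H^{\e}$ with $N=N(\Gamma,\e,\deg W)$. On each sub-polydisc $D$ we obtain one hypersurface $H_\alpha$ of degree $\le d$ with $W\not\subset H_\alpha$ and $(Y_\l\cap W)(\Q,H)\cap\pi_z(D)\subset Y_\l(W)\cup H_\alpha$. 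Taking the union over all sub-polydiscs and using that the $\pi_z(D_i)$ cover $Y_\l$ (since the $B_{r/2}(p_i)$ cover $\Gamma_\l$), and using Lemma~\ref{lem:alg-part-rules} to absorb the contributions to $Y_\l(W)$, we conclude $(Y_\l\cap W)(\Q,H)\subset Y_\l(W)\cup\bigcup_\alpha H_\alpha$ with at most $N H^\e$ hypersurfaces. The uniformity of $d,N$ in $\l$ is automatic because Theorem~\ref{thm:decomp} and Corollary~\ref{cor:hypersurface-select} give bounds depending only on the compact data $K,C_D,C_H,M$ and on $m,\e,\deg W$, never on $\l$ itself.

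\textbf{Main obstacle.} The routine part is the subdivision-and-count. The genuinely delicate point is bookkeeping the two roles of the extra coordinate $f_{m+1}$: one needs the selected hypersurface in $\C^m$ (not $\C^{m+1}$) to both contain the rational points and to avoid $W$, and the clean way to get this is to choose $\vf$ so that its image $Y=\vf(X\cap D)$ already lies in (a cylinder over) $W$ — e.g. take $f_{m+1}$ to be a coordinate transverse to $W$ and intersect the ambient hypersurface in $\C^{m+1}$ with the graph of that coordinate over $W$, reading off a hypersurface in the $\C^m$ containing $W$ as a proper subvariety only when the point genuinely fails the germ condition. Making this reduction precise — i.e. producing from Corollary~\ref{cor:hypersurface-select} a hypersurface with $W\not\subset H_\alpha$, at the cost of shunting the "bad" points into $Y_\l(W)$ — is the one step requiring real care, and it is where the definition of $X(W)$ (Definition~\ref{def:xw}) and Lemma~\ref{lem:alg-part-rules} are used in an essential way.
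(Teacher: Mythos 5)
Your outer scaffolding (cover $\bar\Gamma$ by uniformly-sized polydiscs from Theorem~\ref{thm:decomp}, subdivide into $O(H^{\e'(n+m)})$ polydiscs of polyradius $CH^{-\e'}$, select one hypersurface per polydisc via Corollary~\ref{cor:hypersurface-select}, rescale $\e'$) matches the paper. But the step you yourself flag as "the one requiring real care" is in fact the content of the proposition, and your sketch of it does not work. The paper's mechanism is a dimension-drop argument that is absent from your proposal: one sets $k=\dim W$, considers $Z_\l:=(X_\l\cap(W\times\Omega_w))^{<k}$, and observes that if $q\in Y_\l\cap W$ is a smooth point of $W$ with $q\notin Y_\l(W)$, then the germ of $W$ at $q$ is not contained in $Y_\l$, so $X_\l\cap(W\times\Omega_w)$ has dimension $<k$ at the corresponding point of $\Gamma$; hence $Y_\l\cap W\subset Y_\l(W)\cup\Sing W\cup\pi(Z_\l)$. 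This is what produces the $Y_\l(W)$ term, and it is also what makes the "$W\not\subset H_\alpha$" condition achievable: the decomposition datum for $Z_\l$ has dimension $\dim\cM<k$, so Corollary~\ref{cor:hypersurface-select} only needs $\dim\cM+1\le k$ functions, and one takes these to be $k$ coordinates mapping $W$ dominantly to $\C^k$ --- then no nonzero polynomial in those coordinates can vanish on $W$, so the selected hypersurfaces automatically cut $W$ properly. Your alternative of taking all $m$ ambient coordinates plus "a defining function for $W$" fails on two counts: $W$ has arbitrary codimension and no single defining function, and with $m+1$ functions there is no reason the output hypersurface avoids containing $W$; the fallback of "discarding such points into $Y_\l(W)$" has no justification, since a point lying on a hypersurface containing $W$ tells you nothing about the germ of $W$ being contained in $Y_\l$. ($\Sing W$ is handled separately by one fixed hypersurface $H_0\supset\Sing W$, $H_0\not\supset W$.)

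There is a second gap: uniformity in $W$. You apply Theorem~\ref{thm:decomp} to $X\cap X_\Gamma$ (or vaguely "after intersecting" with $W$), but then the constants $r,C_D,C_H$ would depend on the particular variety $W$, not only on $\deg W$. The proposition asserts $d,N$ depend only on $\deg W$, and this is essential for the induction in Lemma~\ref{lem:exploring-induction}, where the $W$'s are components of $W\cap H_{\alpha'}$ and vary with $\l$ and $H$. The paper achieves this by forming the family $X'$ over $\Lambda\times\cC$, where $\cC$ is the (compact) Chow variety of cycles of dimension $k$ and degree $\deg W$, and applying Theorem~\ref{thm:decomp} once to $X'$ with the compact set $\bar\Gamma\times\cC$. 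Without this (or an equivalent device) your constants are not of the form claimed in the statement.
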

\begin{proof}
  Since the statement involves only the intersection $X\cap\Gamma$
  we may without loss of generality replace $X$ by $X\cap X_\Gamma$,
  and assume that $X\cap\pi^{-1}(U)\subset\Gamma$.
  
  Set $k:=\dim W$. Let $\cC$ denote the projective (hence compact)
  Chow variety (see \cite[Chapter~4]{gkz}) parametrizing all effective
  algebraic cycles of dimension $k$ and degree equal to $\deg W$. We
  denote by $R_V\in\cC$ the point corresponding to a cycle $V$. Then
  the following family is analytic,
  \begin{equation}
    X'\subset\Omega\times(\Lambda\times\cC), \qquad
    X'=\{(z,w,\l,R_V):(z,w,\l)\in X, z\in\supp V\}.
  \end{equation}
  Clearly
  \begin{equation}
    X'_{(\l,\cR_W)} = X_\l \cap(W\times\Omega_w).
  \end{equation}
  We apply Theorem~\ref{thm:decomp} to $X'$ with the compact set
  $\bar\Gamma\times\cC$ and consider the conclusion for points of the
  form $(p,\l,R_W)$ for $(p,\l)\in\bar\Gamma$. We conclude that there
  exist $r,C_D,C_H>0$ depending only on $\Gamma,\deg W$ such that for
  any $(p,\l)\in\bar\Gamma$ there exists a decomposition datum $\cD$
  satisfying
  \begin{enumerate}
  \item $\Delta=\Delta'$ is centered at $p$, and
    $B_r(p)\subset\Delta\subset\Omega$.
  \item $\dim\cM_i< k$, $\norm\cD\le C_D$ and $e(\cD)\le C_H$.
  \item The set  
    \begin{equation}
      Z_\l = (X_\l\cap (W\times\Omega_w))^{<k}.
    \end{equation}
    admits decomposition with respect to $\cD$.
  \end{enumerate}

  Fix $\l\in\Lambda$, let $q\in Y_\l\cap W$ and suppose
  $q\not\in\Sing W$ and $q\not\in Y_\l(W)$. Then the germ of $W$ at
  $q$ is smooth $k$-dimensional and not contained in $Y_\l$.
  Equivalently its image $\tilde\psi(W\times\{\l\})\subset\Gamma$ is
  the germ of a smooth $k$-dimensional analytic set at
  $\tilde\psi(q,\l)$ which is not contained in $X_\l$. Since we assume
  $X\subset\Gamma$ in a neighborhood of $\tilde\psi(q,\l)$ we conclude
  that the dimension of
  \begin{equation}
    X_\l\cap (W\times\Omega_w) = X_\l\cap\Gamma_\l\cap(W\times\Omega_w)
    = X_\l\cap\tilde\psi(W\times\{\l\})
  \end{equation}
  at $\tilde\psi(q,\l)$ is strictly smaller than $k$, i.e.
  $(q,\psi(q,\l))\in Z_\l$ and thus $q\in\pi(Z_\l)$. In conclusion,
  \begin{equation}\label{eq:Y_l-desc}
    Y_\l\cap W \subset Y_\l(W)\cup\Sing W\cup \pi(Z_\l).
  \end{equation}
  
  Fix a hypersurface $H_0\subset\C^m$ containing $\Sing W$ and not
  containing $W$. It is clear that one can choose $H_0$ of some degree
  $d_0$ depending only on $\deg W$.
  
  Since $\dim W=k$, one can choose a subset of $k$ coordinates on
  $\C^m$, say $\vf=(z_1,\ldots,z_k)$, such that $\vf:W\to\C^k$ is
  dominant. In particular, no non-zero polynomial in the coordinates
  $\vf$ vanishes on $W$. Since $\bar\Gamma$ is compact, the
  coordinates $\vf$ are certainly bounded (in absolute value) in the
  $r$-neighborhood of $\bar\Gamma$ by some number $M$. Fix some
  $\e'>0$ whose value will be determined later, and let
  \begin{align}
     d&=d(m,\e',C_H) \\
     C&=C(m,\e',C_H,C_D,M)
  \end{align}
  be the two constants of Corollary~\ref{cor:hypersurface-select}.

  Let $\delta=CH^{-\e'}$. Let $p\in(\bar\Gamma)_\l$ and denote
  $D_p=D_{\delta r}(p)\subset\Omega$. We apply
  Corollary~\ref{cor:hypersurface-select} to $\vf$ and the set $Z_\l$
  and conclude that there exists a polynomial $P_p(\vf)$ of degree at
  most $d$ such that
  \begin{equation}
    (\pi_z(D_p\cap Z_\l))(\Q,H) \subset \{P_p=0\}.
  \end{equation}
  We let $H_p:=\{P_p=0\}\subset\C^m$.
  
  Finally it remains to cover the compact set $(\bar\Gamma)_\l$ by the
  polydiscs $\{D_p:p\in S\}$ for some finite set $S\subset(\bar\Gamma)_\l$
  and take
  \begin{equation}
    \{H_\alpha\}=\{H_0\}\cup \{H_p:p\in S\}.
  \end{equation}
  Then~\eqref{eq:Y_l-desc} and the choice of $H_0,H_p$ gives
  \begin{equation}
    (Y_\l\cap W)(\Q,H)\subset Y_\l(W)\cup \bigcup_\alpha H_\alpha.
  \end{equation}
  as claimed.
  
  Since each $D_p$ has radius at least $CrH^{-\e'}$ and $\bar\Gamma$ is
  compact, it is easy to see that one can choose a covering of size at
  most $N H^{\e'(n+m)}$, where $N=N(Cr,\Gamma,\e')$. Finally taking $\e'=\e/(n+m)$
  we obtain the statement of the proposition.
\end{proof}

The following Lemma gives an inductive proof of
Theorem~\ref{thm:exploring-projection}, which is obtained for the case
$W=\C^m$.

\begin{Lem}\label{lem:exploring-induction}
  Let $X\subset\Omega\times\Lambda$ be an analytic family
  and set
  \begin{equation}
    Y:=\pi(X\cap\Gamma)\subset\Omega_z\times\Lambda.
  \end{equation}
  Let $W\subset\C^n$ be an irreducible algebraic variety.
  
  Let $\e>0$. There exist constants $d = d(\Gamma,\e,\deg W)$ and
  $N = N(\Gamma,\e,\deg W)$ with the following property. For any
  $\l\in\Lambda$ and any $H\in\N$ there exist at most $N H^\e$ many
  irreducible algebraic varieties $V_\alpha\subset\C^m$ with
  $\deg V_\alpha\le d$ such that
  \begin{equation}\label{eq:exploring-induction}
    (Y_\l\cap W)(\Q,H)\subset \bigcup_\alpha Y_\l(V_\alpha).
  \end{equation}
\end{Lem}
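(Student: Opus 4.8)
The plan is to prove Lemma~\ref{lem:exploring-induction} by induction on $k := \dim W$, using Proposition~\ref{prop:exploring-family} as the engine for a single inductive step. For the base case $k = 0$, the variety $W$ is a finite set of points, so $(Y_\l \cap W)(\Q, H)$ is itself contained in a bounded number of points, and we may simply take the $V_\alpha$ to be these points (zero-dimensional irreducible varieties), with $N$ depending only on $\deg W$. Theorem~\ref{thm:exploring-projection} will then follow by applying the Lemma with $W = \C^m$, which is irreducible of degree $1$, so that $d, N$ depend only on $\Gamma, \e$.

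For the inductive step, suppose $k \ge 1$ and the Lemma holds for all irreducible varieties of dimension $< k$ (with bounds depending on the degree). First I would apply Proposition~\ref{prop:exploring-family} to $W$ with a parameter $\e' > 0$ to be chosen: this produces, for each $\l$ and $H$, a collection of at most $N' H^{\e'}$ hypersurfaces $H_\alpha \subset \C^m$ of degree $\le d' = d'(\Gamma, \e', \deg W)$, none containing $W$, with
\begin{equation}
  (Y_\l \cap W)(\Q, H) \subset Y_\l(W) \cup \bigcup_\alpha H_\alpha.
\end{equation}
Since $W \not\subset H_\alpha$, each intersection $W \cap H_\alpha$ is a proper algebraic subset of $W$, hence has dimension $< k$. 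I would decompose $W \cap H_\alpha$ into its irreducible components; by Bézout-type bounds the number of components and their degrees are bounded in terms of $\deg W$ and $d'$ alone, say by a constant $B$. This gives at most $B \cdot N' H^{\e'}$ irreducible varieties $W_{\alpha, \beta}$ of dimension $< k$ and degree $\le D_1 = D_1(\Gamma, \e', \deg W)$, whose union contains $\bigcup_\alpha (W \cap H_\alpha)$, and therefore
\begin{equation}
  (Y_\l \cap W)(\Q, H) \subset Y_\l(W) \cup \bigcup_{\alpha, \beta} (Y_\l \cap W_{\alpha, \beta})(\Q, H).
\end{equation}

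Now I would apply the inductive hypothesis to each $W_{\alpha, \beta}$ with parameter $\e'$: each contributes at most $N'' H^{\e'}$ irreducible varieties $V \subset \C^m$ of degree $\le d'' = d''(\Gamma, \e', D_1)$ with $(Y_\l \cap W_{\alpha,\beta})(\Q,H) \subset \bigcup Y_\l(V)$, where $N'', d''$ depend only on $\Gamma, \e', D_1$ — crucially \emph{uniformly} over the $W_{\alpha,\beta}$ since there are only finitely many possible degrees $\le D_1$. Collecting everything, $(Y_\l \cap W)(\Q, H)$ is covered by $Y_\l(W)$ together with at most $B N' \cdot N'' H^{2\e'}$ sets of the form $Y_\l(V)$. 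Observing that $Y_\l(W)$ itself equals $Y_\l(V)$ for the single irreducible variety $V = W$ when $W \subset Y_\l$ — or can be absorbed since $Y_\l(W) \subset Y_\l(V)$ is trivially achieved by including $V = W$ in the list (a single extra variety of degree $\deg W$) — we obtain a bound of the form $N H^{2\e'}$ with $d = \max(d'', \deg W)$. Taking $\e' = \e/2$ at the outset gives exactly the claimed bound $N H^\e$ with $d = d(\Gamma, \e, \deg W)$ and $N = N(\Gamma, \e, \deg W)$.

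The main obstacle is bookkeeping the uniformity of constants through the recursion: one must ensure that at each of the $k$ levels of the induction the degree bound $d$ and multiplicity constant $N$ depend only on $\Gamma$, the accumulated $\e$-budget, and the degree of the variety currently under consideration — not on $\l$, not on $H$, and not on the particular components $W_{\alpha,\beta}$ encountered. This works because the decomposition of $W \cap H_\alpha$ into components has degree and count controlled purely by $\deg W$ and $d'$ (standard effective bounds on intersections), so only finitely many "degree types" ever arise, and the inductive constants for each type can be taken to be their maximum. A secondary point is the treatment of $Y_\l(W)$: one should check, using Lemma~\ref{lem:alg-part-rules} and the fact that $W$ is itself an irreducible variety of the allowed degree once $d \ge \deg W$, that the leftover term $Y_\l(W)$ can be folded into the final union at the cost of one additional $V_\alpha$, so it does not need separate recursive treatment.
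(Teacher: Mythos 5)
Your proposal is correct and follows essentially the same route as the paper: induct on $\dim W$, invoke Proposition~\ref{prop:exploring-family} with exponent $\e/2$ to get $N'H^{\e/2}$ hypersurfaces not containing $W$, pass to the irreducible components of $W\cap H_\alpha$ (with degree and count controlled by B\'ezout), recurse with exponent $\e/2$, and absorb $Y_\l(W)$ by adjoining $W$ itself to the list of $V_\alpha$. Your extra care about the uniformity of the inductive constants over the finitely many possible degree types is exactly the right bookkeeping and matches what the paper's choice of $d=d(\Gamma,\e,\deg W)$, $N=N(\Gamma,\e,\deg W)$ implicitly relies on.
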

\begin{proof}
  We proceed by induction on $\dim W$. Apply
  Proposition~\ref{prop:exploring-family} to obtain a family of at
  most $N'H^{\e/2}$ hypersurfaces $\{H_{\alpha'}\}\subset\C^m$ of
  degree $d'$. Let $\{W_\alpha\}$ denote the union over $\alpha'$ of
  the sets of irreducible components of $W\cap H_{\alpha'}$. Then
  \begin{align}
    \#\{W_\alpha\}&\le d'N'H^{\e/2}, & \dim W_\alpha&=\dim W-1, & \deg W_\alpha &\le d'\cdot\deg W
  \end{align}
  and   
  \begin{equation}\label{eq:exploring-induction-1}
    (Y_\l\cap W)(\Q,H)\subset Y_\l(W)\cup \bigcup_\alpha W_\alpha.
  \end{equation}
  Apply the inductive hypothesis to each $W_\alpha$ to obtain
  collections $W_{\alpha,\beta}$, of size at most $N''H^{\e/2}$ for
  each $\alpha$, such that
  \begin{equation}\label{eq:exploring-induction-2}
    (Y_\l \cap W_\alpha)(\Q,H) \subset \bigcup_\beta W_{\alpha,\beta}.
  \end{equation}
  Finally we take $\{V_\alpha\}$ to be the union of the sets $\{W\}$
  and $\{W_{\alpha,\beta}\}$. The size of $\{V_\alpha\}$ is bounded by
  $1+d'N'N''H^\e$ as claimed and~\eqref{eq:exploring-induction} is
  satisfied by~\eqref{eq:exploring-induction-1}
  and~\eqref{eq:exploring-induction-2}.
\end{proof}

\section{Subanalytic sets and $L^D_\an$}
\label{sec:subanalytic}

Let $I=[-1,1]$. For $m\ge0$ we let $\R\{X_1,\ldots,X_m\}$ denote the
ring of power series converging in a neighborhood of $I^m$.
To each $f\in\R\{X_1,\ldots,X_m\}$ we naturally associate
the map $f:I^m\to\R$.

We recall the language $L^D_\an$ of \cite{denef-vdd}. The language
includes a countable set of variables $\{X_1,X_2,\ldots\}$, a
relation symbol $<$, a binary operation symbol $D$, and an $m$-ary
operation symbol $f$ for every $f\in\R\{X_1,\ldots,X_m\}$ satisfying
$f(I^m)\subset I$. We view $I$ as an $L^D_\an$-structure by
interpreting $<$ and $f$ in the obvious way and interpreting $D$ as
\emph{restricted division}, namely
\begin{equation}
  D(x,y) =
  \begin{cases}
    x/y & |x|\le|y| \text{ and } y\neq0 \\
    0 & \text{otherwise.}
  \end{cases}
\end{equation}
We denote by $L_\an$ the language obtained from $L_\an^D$ by omitting
$D$.

For every $L_\an^D$-term $t(X_1,\ldots,X_m)$ we have an associated map
$t:I^m\to I$ which we denote $x\to t(x)$. If $t$ is an $L_\an$-term
then this map is real analytic in $I^m$.

For an $L_\an^D$-formula $\phi(X_1,\ldots,X_m)$ we write $\phi(I^m)$
for the set of points $x\in I^m$ satisfying $\phi$. If $A\subset I^m$
we write $\phi(A):=\phi(I^m)\cap A$. We will use the following key
result of \cite{denef-vdd}.
\begin{Thm}\label{thm:denef-vdd}
  $I$ has elimination of quantifiers in $L^D_\an$. As a consequence, a
  set $A\subset I^m$ is subanalytic in $\R^m$ if an only if it is
  defined by a quantifier-free formula $\phi$ of $L^D_\an$.
\end{Thm}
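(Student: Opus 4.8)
The plan is to establish the quantifier-elimination statement; the ``as a consequence'' clause then follows in both directions with little extra work. By the standard reduction, it suffices to eliminate a single existential quantifier: given a quantifier-free $L^D_\an$-formula $\psi(x_1,\dots,x_m,y)$, one must produce a quantifier-free $L^D_\an$-formula equivalent on $I^m$ to $\exists y\,\psi$ (with $y$ ranging over $I=[-1,1]$). Now $\psi$ is a Boolean combination of atomic conditions $t>0$ and $t=0$ for $L^D_\an$-terms $t$ in $(x,y)$, each term built from restricted analytic functions $f\in\R\{X_1,\dots,X_k\}$ and occurrences of the restricted-division symbol $D$. The first move is to remove $D$: for each occurrence of a subterm $D(u,v)$, introduce the case distinction ``$v\ne0$ and $|u|\le|v|$'' versus ``$v=0$ or $|u|>|v|$'', both cut out by quantifier-free $L^D_\an$-conditions; on the first branch $D(u,v)$ is the genuine quotient $u/v$ and on the second it is $0$. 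Processing $D$-subterms from the innermost outward one partitions $I^{m+1}$ into finitely many quantifier-free $L^D_\an$-definable pieces, on each of which every $D$ has been resolved, and on each of which — recording also the signs of all denominators that arose — an atomic condition unwinds (a quotient with denominator of known sign has its sign and vanishing determined by its numerator) into a Boolean combination of sign conditions on the restricted analytic functions at the leaves of $t$. Thus $\exists y\,\psi$ is reduced, over a finite partition, to eliminating $\exists y$ from a Boolean combination of conditions $f_j(x,y)>0$ and $f_j(x,y)=0$ with finitely many restricted analytic $f_j$ — a problem now living inside the division-free language $L_\an$.

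Next comes the analytic core, a preparation step. Fix $(x_0,y_0)\in I^{m+1}$; for each $f_j$ with $f_j(x_0,\cdot)\not\equiv0$, translate so that $(x_0,y_0)$ is the origin, so that $f_j$ becomes regular in $y$ of some order $b_j$, and apply the Weierstrass preparation theorem to write $f_j=U_j\cdot P_j$ with $U_j$ an analytic unit (hence of locally constant sign) and $P_j$ monic in $y$ of degree $b_j$ with analytic coefficients in $x$; functions vanishing identically in $y$ near $(x_0,y_0)$ depend only on $x$ there and are absorbed into the parameter data. By compactness of $I^{m+1}$, finitely many such local factorizations — together with their domains of validity, translation amounts, unit signs, and polynomial coefficients, all $L^D_\an$-definable — suffice; refining the partition accordingly, along each fibre $\{x\}\times I$ the sign of every $f_j$ is governed by a unit of known constant sign times a polynomial in $y$ of bounded degree with $L^D_\an$-definable coefficients. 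It remains to decide, for prescribed definable subintervals of $I$, whether there is a $y$ realizing a prescribed Boolean pattern of signs of finitely many polynomials in $y$ of bounded degree; by Tarski--Seidenberg this is expressed by a quantifier-free condition on the coefficients of those polynomials (via their subresultants, discriminants, and values at the interval endpoints), and since coefficients and endpoints are $L^D_\an$-definable functions of $x$, we obtain a quantifier-free $L^D_\an$-formula. Disjoining over the finitely many pieces gives a quantifier-free $L^D_\an$-formula equivalent to $\exists y\,\psi$, completing the elimination.

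For the consequence: a subanalytic subset of $I^m$ is, by definition, a coordinate projection of a semianalytic set, hence definable by an $L_\an$-formula (with quantifiers for the projection), in particular by an $L^D_\an$-formula, so quantifier elimination provides a quantifier-free $L^D_\an$-defining formula. Conversely, every $L^D_\an$-term has subanalytic graph (induction on term complexity, using that the graph of restricted division, $\{(a,b,c):(b\ne0\wedge|a|\le|b|\wedge bc=a)\vee((b=0\vee|a|>|b|)\wedge c=0)\}$, is semianalytic, and that subanalytic sets are closed under coordinate projection and fibre products), so every atomic $L^D_\an$-formula defines a subanalytic set, and a quantifier-free formula, being a Boolean combination of these, defines a subanalytic set by Gabrielov's theorem on closure of subanalytic sets under complement. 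I expect the main obstacle to be the preparation step and its uniformity: Weierstrass preparation is intrinsically local, and assembling the finitely many local factorizations into one finite $L^D_\an$-definable partition of $I^{m+1}$ with all auxiliary data definable — while simultaneously tracking the case distinctions forced by restricted division — is the technical heart of the argument of Denef and van den Dries, and in the real case is intimately tied to Gabrielov's theorem itself.
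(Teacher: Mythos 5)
First, a point of reference: the paper does not prove this statement at all --- it is quoted as the main theorem of Denef and van den Dries \cite{denef-vdd} and used as a black box --- so your proposal has to be measured against the argument in that reference. Against that standard there are two genuine gaps. The first is in your elimination of $D$. Resolving a subterm $D(u,v)$ by the case split ``$v\neq0$ and $|u|\le|v|$'' versus its complement only disposes of $D$ when it occurs at the top level of a term. When a quotient sits \emph{inside} an analytic function symbol, as in $g(D(u,v))$, knowing the signs of $u$ and $v$ tells you nothing about the sign of $g(u/v)$, and $g(u/v)$ is not an $L_\an$-term. So the claimed reduction to ``a problem now living inside the division-free language $L_\an$'' fails, and it must fail for a structural reason: quantifier-free $L_\an$-definable sets of $I^m$ are exactly the semianalytic ones, which are not closed under projection (Osgood's example), which is precisely why Denef and van den Dries enlarge the language by $D$. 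Their proof never discards $D$; it uses $D$ to express the output of parametrized Weierstrass division. (Compare the paper's own Proposition~\ref{prop:D-equation-to-proj}, which handles nested quotients by adjoining new variables $W_j$ for them rather than by case analysis.)

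The second gap is in the preparation step. Weierstrass preparation at $(x_0,y_0)$ requires $f_j(x_0,\cdot)\not\equiv0$ near $y_0$, and your claim that functions with $f_j(x_0,\cdot)\equiv0$ ``depend only on $x$ there'' is false ($f(x,y)=xy$ at $x_0=0$); this degenerate locus is exactly the hard case, handled in \cite{denef-vdd} by dividing $f_j$ by a distinguished coefficient of its expansion in $y$ after partitioning the parameter space --- an operation that again needs $D$. Moreover, even where preparation applies, the Weierstrass coefficients $a_{j,i}(x)$ are a priori only germs of analytic functions near $x_0$, not $L^D_\an$-terms; showing that after a finite definable partition, rescalings, and such divisions these data become terms of the language with uniform domains is the entire technical content of the theorem. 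You correctly flag this as ``the technical heart'' but then defer it, so what remains is an outline of the routine reductions surrounding the core rather than a proof. The ``consequence'' paragraph (subanalytic $\Leftrightarrow$ quantifier-free $L^D_\an$-definable, via Gabrielov's theorem for the converse) is fine modulo the quantifier elimination itself.
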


\subsection{Admissible formulas}

Let $U\subset\mathring{I}^m$ be an open subset. We define the notion
of an $L_\an^D$-term \emph{admissible} in $U$ by recursion as follows:
a variable $X_j$ is always admissible in $U$; a term
$f(t_1,\ldots,t_m)$ is admissible in $U$ if and only if the terms
$t_1,\ldots,t_m$ are admissible in $U$; and a term $D(t_1,t_2)$ is
admissible in $U$ if $t_1,t_2$ are admissible in $U$ and if
\begin{equation}
  |t_1(x)| \le |t_2(x)| \text{ and } t_2(x)\neq0
\end{equation}
for every $x\in U$. An easy induction gives the following.
\begin{Lem}\label{lem:admissible-term-analytic}
  If $t$ is admissible in $U$ then the map $t:U\to I$ is real
  analytic.
\end{Lem}

We will say that an $L_\an^D$-formula $\phi$ is admissible in $U$ if
all terms appearing in $\phi$ are admissible in $U$. Here and below,
when speaking about ``terms appearing in $\phi$'' we consider not only
the top-level terms appearing in the relations, but also every
sub-term appearing in the construction tree of each term. The
following proposition shows that when considering definable subsets of
$I$ one can essentially reduce to admissible formulas.

\begin{Prop}\label{prop:admissible-decomp}
  Let $U\subset\mathring I^m$ be an open subset and
  $\phi(X_1,\ldots,X_m)$ a quantifier-free $L_\an^D$-formula. There
  exist open subsets $U_1,\ldots,U_k\subset U$ and quantifier-free
  $L_\an^D$-formulas $\phi_1,\ldots,\phi_k$ such that $\phi_j$ is
  admissible in $U_j$ and
  \begin{equation}
    \phi(U) = \bigcup_{j=1}^k \phi_j(U_j).
  \end{equation}
\end{Prop}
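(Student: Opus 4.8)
The plan is to proceed by structural induction on the term complexity of $\phi$, tracking at each stage the (finitely many) possible ``sign/magnitude conditions'' that the sub-terms of $\phi$ may satisfy. Concretely, I would first observe that the only obstruction to admissibility is the appearance of a sub-term $D(t_1,t_2)$ in a region where the defining condition $|t_1|\le|t_2|$ and $t_2\ne0$ may fail; by the definition of the interpretation of $D$, on any connected region where one of the alternatives $\{|t_1|\le|t_2|, t_2\ne0\}$ or its negation holds throughout, $D(t_1,t_2)$ agrees with a simpler term (either $t_1/t_2$, which one should instead encode via a fresh relation, or the constant $0$). So the natural strategy is: enumerate all sub-terms $D(s_1,s_2)$ occurring in $\phi$, and for each of them partition $U$ according to the truth value of ``$|s_1(x)|\le|s_2(x)|$ and $s_2(x)\ne0$''. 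This gives a finite partition of $U$ into subsets $U_j$ (which need not be open a priori, but we may pass to their interiors and discard lower-dimensional pieces, or rather replace $U$ by a union of open pieces on which all these conditions are constant) on each of which every $D$-subterm of $\phi$ is forced into one of its two branches.

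The subtlety, and what I expect to be the main obstacle, is that the conditions ``$|s_1|\le|s_2|$ and $s_2\ne0$'' are themselves expressed using sub-terms $s_1,s_2$ that may fail to be admissible — so one cannot naively use these as the defining formulas of the $U_j$, and one must be careful that the partition is genuinely into \emph{open} sets defined by \emph{admissible} formulas or at least by conditions that do not reintroduce the problem. The way around this is to induct on the number of $D$-symbols in $\phi$: pick an innermost occurrence $D(s_1,s_2)$, so that $s_1,s_2$ contain strictly fewer $D$-symbols; by the inductive hypothesis applied to the formula encoding $|s_1|\le|s_2| \wedge s_2\ne0$ (a quantifier-free $L_\an^D$-formula with fewer $D$'s), we get a decomposition of $U$ into open pieces with admissible defining formulas on each of which this condition has a definite truth value. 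On the piece where it holds, $D(s_1,s_2)$ is admissible there; on the piece where it fails, replace $D(s_1,s_2)$ by the constant term $0$ throughout $\phi$. In either case the resulting formula has strictly fewer $D$-symbols, and we apply the inductive hypothesis again. The base case is a formula with no $D$-symbols at all, which is automatically admissible in any open $U$ by Lemma~\ref{lem:admissible-term-analytic} (or rather, trivially, since only $L_\an$-terms appear).

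In carrying this out I would be careful about two bookkeeping points. First, when we ``replace $D(s_1,s_2)$ by $0$'' we must do so consistently at every occurrence of that syntactic sub-term, and we must make sure this does not change $\phi(U_j)$ — which is exactly the content of the definition of restricted division on the region where $|s_1|>|s_2|$ or $s_2=0$. Second, when we intersect $U$ with the locus where $|s_1(x)|\le|s_2(x)|$ (or its complement) the resulting set is not open; but its interior is, and the boundary locus $\{|s_1|=|s_2|\}\cup\{s_2=0\}$ is a proper analytic subset of each piece, so we may harmlessly replace $U$ by the union of the (finitely many) open pieces in the complement of this closed set — noting that $\phi(U)$ differs from $\bigcup_j \phi(U_j)$ only on this lower-dimensional set, which we must handle by induction on $m$ (treating the boundary locus, which is locally a graph in fewer variables, separately). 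Actually, on reflection, the cleanest formulation avoids passing to interiors at the top level: one keeps $U$ fixed and instead observes that $\phi(U)$ is the union over the two branches of $\phi_{\mathrm{branch}}(U\cap V_{\mathrm{branch}})$ where $V_{\mathrm{branch}}$ is the (constructible, not open) branch locus, then applies the inductive hypothesis after shrinking — but this requires the statement to be proved first for open $U$ and then bootstrapped, so I would organize the induction on the pair (number of $D$-symbols in $\phi$, $m$) with lexicographic order, the drop in $m$ occurring precisely when we must descend to the branch locus $\{|s_1|=|s_2|\}$, which after a change of coordinates is the graph of an $L_\an^D$-definable function and hence reduces to a formula in $m-1$ variables.
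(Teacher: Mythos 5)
Your core induction (on the number of $D$-occurrences, case-splitting on the branches of an innermost $D(t_1,t_2)$ whose arguments are admissible) coincides with the paper's, and your treatment of the open branch $\{|t_1|<|t_2|\}$ is correct. The gap is in how you handle the non-open branch loci $\{|t_1|=|t_2|,\ t_2\neq0\}$ and $\{|t_1|>|t_2|\}\cup\{t_2=0\}$, and neither of your two proposed fixes works. First, you cannot invoke the inductive hypothesis "applied to the formula encoding $|t_1|\le|t_2|\land t_2\neq0$" to get a decomposition of $U$ into open pieces on which that condition has a definite truth value: the proposition only represents the \emph{set} defined by a formula as a union $\bigcup_j\phi_j(U_j)$; it does not partition $U$, and in particular produces no open set on which the condition \emph{fails}. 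Second, discarding the thin locus and recovering it by descending to $m-1$ variables does not go through: $\{|t_1|=|t_2|\}\cup\{t_2=0\}$ is a closed analytic subset of $U$ which is in general not a graph (it may be singular, of varying dimension), and even granting a parametrization, the conclusion you must produce consists of \emph{open subsets of} $U\subset\mathring I^m$ together with formulas in the original $m$ variables --- an induction landing in $I^{m-1}$ yields objects of the wrong type. Since the proposition demands the exact equality $\phi(U)=\bigcup_j\phi_j(U_j)$, the points of $\phi(U)$ lying on these loci cannot be dropped.

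The missing idea is that the branch condition need not be imposed on the \emph{domain} at all. Because $t_1,t_2$ are admissible in $U$, the conditions $(|t_1|=|t_2|)\land(t_2\neq0)$ and $(|t_1|>|t_2|)\lor(t_2=0)$ are themselves built from admissible terms, so one may keep $U_j=U$ and instead conjoin the branch condition to the formula while substituting for $D(t_1,t_2)$ an admissible term representing its value on that branch. This is exactly what the paper does: writing $\phi=\phi'(X,t)$, it takes $\phi_2=(|t_1|=|t_2|)\land(t_2\neq0)\land\phi'(X,1)$ and $\phi_3=\big((|t_1|>|t_2|)\lor(t_2=0)\big)\land\phi'(X,0)$ with $U_2=U_3=U$, and only the open branch $U_1=\{|t_1|<|t_2|\}$ shrinks the domain. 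In each case the count of inadmissible terms strictly drops, and the induction closes with no appeal to interiors, stratifications, or a secondary induction on $m$.
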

\begin{proof}
  We prove the claim by induction on the number $N$ of
  $U$-inadmissible terms in $\phi$. Clearly if this number is zero we
  are done. Otherwise let $t$ be some minimal $U$-inadmissible term in
  $\phi$, i.e. such that all sub-terms appearing in $t$ are
  $U$-admissible. Express $\phi$ in the form
  \begin{equation}
    \phi(X_1,\ldots,X_m) = \phi'(X_1,\ldots,X_m,t)
  \end{equation}
  for a quantifier-free $L_\an^D$-formula $\phi'$ of $m+1$ free
  variables. By the definition of admissibility it is clear that
  $t=D(t_1,t_2)$, and by minimality $t_1,t_2$ are admissible in $U$.
  We let
  \begin{align}
    \phi_1&\equiv\phi & U_1 &= \{ x\in U: |t_1(x)|<|t_2(x)| \}
  \end{align}
  and
  \begin{align}
    \phi_2&=(|t_1|=|t_2|)\land (t_2\neq0)\land \phi'(X_1,\ldots,X_m,1) \\
    \phi_3&=(|t_1|>|t_2| \lor (t_2=0))\land \phi'(X_1,\ldots,X_m,0) .
  \end{align}
  with $U_2=U_3=U$. Note that for readability we use the absolute value
  as a shorthand above, but it is clear that the relations can be
  expressed in terms of the unary minus operation corresponding to the
  function $-:I\to I, x\to(-x)$.

  Since $t_1,t_2$ are admissible in $U$ they define real analytic (in
  particular continuous) functions there, and the relation defining
  $U_1$ is indeed open. Moreover in $U_1$ the term $t$ is admissible
  by definition and hence the number of $U_1$-inadmissible terms in
  $\phi_1$ is strictly smaller than $N$. Similarly, the new relations
  introduced in $\phi_2$ (resp. $\phi_3$) are admissible in $U$, and
  since the inadmissible $t$ is replaced by the admissible term $1$
  (resp. $0$) the number of $U_2$ (resp. $U_3$) inadmissible terms
  is strictly smaller than $N$. It is an easy exercise to check
  that
  \begin{equation}
    \phi(U) = \bigcup_{j=1}^3 \phi_j(U_j).
  \end{equation}
  The proof is now concluded by applying the inductive hypothesis to
  each pair $\phi_j,U_j$ for $j=1,2,3$.
\end{proof}

\subsection{Basic formulas and equations}

We say that $\phi$ is a \emph{basic $D$-formula} if it has
the form
\begin{equation}
  \big(\land_{j=1}^k t_j(X_1,\ldots,X_m)=0\big)\land\big(\land_{j=1}^{k'} s_j(X_1,\ldots,X_m)>0\big)
\end{equation}
where $t_j,s_j$ are $L_\an^D$-terms. It is easy to check the following.

\begin{Lem}\label{lem:basic-disjunction}
  Every quantifier free $L_\an^D$-formula $\phi$ is equivalent in the
  structure $I$ to a finite disjunction of basic formulas. If $\phi$
  is $U$-admissible then so are the basic formulas in the disjunction.
\end{Lem}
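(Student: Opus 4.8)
The plan is to reduce an arbitrary quantifier-free $L_\an^D$-formula to a normal form, and then push the $D$-operations to the leaves. First I would recall that in any structure a quantifier-free formula is a Boolean combination of atomic formulas; since the only relation symbols are $<$ (and $=$, which can be taken as a primitive or expressed via $<$ and unary minus), every atomic formula is equivalent to one of the form $t(X_1,\ldots,X_m) > 0$ or $t(X_1,\ldots,X_m)=0$ for an $L_\an^D$-term $t$ — indeed an inequality $t_1 < t_2$ becomes $t_2 - t_1 > 0$ using the binary subtraction map, which is one of the $f\in\R\{X_1,X_2\}$ (after rescaling so that it maps $I^2$ into $I$), and similarly $t_1 = t_2$ becomes $t_1 - t_2 = 0$. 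Then a general quantifier-free formula, being a Boolean combination of such atoms, can be put in disjunctive normal form: a finite disjunction of conjunctions, where each conjunct is an atom or a negated atom. The only remaining point is to eliminate negations of atoms. A negation $\lnot(t>0)$ is equivalent to $(-t>0)\lor(t=0)$ — or more carefully $(-t)>0 \lor t=0$, again expressing $-t$ via the unary minus map — and $\lnot(t=0)$ is equivalent to $(t>0)\lor(-t>0)$. Substituting these equivalences and re-expanding the disjunctive normal form yields precisely a finite disjunction of basic $D$-formulas, since each resulting conjunct is a conjunction of atoms $t_j=0$ and $s_j>0$; by reordering we may list all the equalities first and all the strict inequalities after, matching the stated form. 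All of these manipulations are valid in any ordered structure interpreting these symbols, in particular in $I$, so the equivalence holds in $I$ as required.

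For the admissibility clause, I would observe that every step above only introduces \emph{new relations built from terms already present in $\phi$ together with applications of the functions $-$ (unary minus) and $t_1,t_2\mapsto t_1-t_2$} (and possibly the constants $0,1$, but in fact those need not appear). Since $\phi$ is $U$-admissible, every subterm occurring in $\phi$ is admissible in $U$; and admissibility is closed under applying the real-analytic maps $f$ (the recursive clause for $f(t_1,\ldots,t_m)$ demands only that the arguments be admissible), and subtraction and negation are such maps $f$. No new $D$-operations are introduced anywhere in the reduction — this is the crucial observation, since $D$ is the only operation whose admissibility carries a side condition. Hence every term appearing in each basic formula of the disjunction is obtained from admissible terms by admissibility-preserving operations, so each basic formula is $U$-admissible.

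The only genuinely delicate bookkeeping point — the one I would treat as the main obstacle, though it is minor — is the treatment of the function symbols for the arithmetic operations. The language as defined contains an $m$-ary symbol $f$ only for $f\in\R\{X_1,\ldots,X_m\}$ with $f(I^m)\subset I$, and $x\mapsto x_2-x_1$ does not map $I^2$ into $I$. One handles this by using the rescaled map $(x_1,x_2)\mapsto (x_2-x_1)/2$, whose sign and zero-set are the same as those of $x_2-x_1$; similarly a comparison $t_1 < t_2$ among terms valued in $I$ is equivalent to $(t_2-t_1)/2 > 0$. This is exactly the kind of harmless rescaling already invoked in the paper (cf. the parenthetical remarks about expressing relations via the unary minus map in the proof of Proposition~\ref{prop:admissible-decomp}), so I would simply note it in passing and not belabor it. With that understood, the proof is a routine exercise in propositional logic combined with the syntactic closure property of admissibility, and I would present it as such.
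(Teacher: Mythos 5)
Your proof is correct, and it is exactly the argument the paper intends: the paper states this lemma with only the remark ``It is easy to check the following,'' and the intended check is precisely your disjunctive-normal-form reduction, elimination of negated atoms via the linear order on $I$, and the observation that the rewriting introduces only new applications of function symbols (rescaled subtraction and unary minus) and no new occurrences of $D$, so $U$-admissibility is preserved. Your handling of the side condition $f(I^m)\subset I$ by rescaling is the same device the paper itself invokes in the proof of Proposition~\ref{prop:admissible-decomp}, so nothing further is needed.
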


We say that $\phi$ is a \emph{basic $D$-equation} if $k'=0$, i.e. if
it involves only equalities. If $\phi$ is a basic $D$-formula we
denote by $\tilde\phi$ the basic $D$-equation obtained by removing all
inequalities.

Let $\phi$ be a $U$-admissible basic $D$-formula for some
$U\subset I^m$. Then $\tilde\phi$ is $U$-admissible as well. Moreover
since all the terms $s_j$ evaluate to continuous functions in $U$ the
strict inequalities of $\phi$ are open in $U$ and we have the
following.

\begin{Lem}\label{lem:phi-tilde-rel-open}
  Suppose $\phi$ is $U$-admissible basic $D$-formula. Then $\phi(U)$
  is relatively open in $\tilde\phi(U)$.
\end{Lem}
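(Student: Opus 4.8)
The plan is to prove that if $\phi$ is a $U$-admissible basic $D$-formula then $\phi(U)$ is relatively open in $\tilde\phi(U)$, where $\tilde\phi$ is obtained from $\phi$ by deleting all the strict inequalities $s_j > 0$.

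First I would unwind the definitions. Write $\phi$ as $\big(\land_{j=1}^k t_j = 0\big)\land\big(\land_{j=1}^{k'} s_j>0\big)$, so that $\tilde\phi$ is $\land_{j=1}^k t_j = 0$. By definition, $\tilde\phi(U)$ is the set of $x \in U$ at which all the $t_j$ vanish, and $\phi(U)$ is the subset of $\tilde\phi(U)$ at which additionally every $s_j(x) > 0$. Next I would invoke the hypothesis that $\phi$ is $U$-admissible: every term appearing in $\phi$ — in particular each $s_j$ — is admissible in $U$, so by Lemma~\ref{lem:admissible-term-analytic} each map $s_j : U \to I$ is real analytic, hence continuous, on $U$. (I should note, as the paper does just before the statement, that $\tilde\phi$ inherits $U$-admissibility since its terms are among those of $\phi$, so $\tilde\phi(U)$ makes sense and is genuinely the common zero set of the $t_j$.)

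The key step is then purely topological. The set $V := \{x \in U : s_j(x) > 0 \text{ for all } j = 1,\ldots,k'\}$ is an intersection of finitely many preimages $s_j^{-1}((0,\infty))$ of open sets under continuous maps, hence $V$ is open in $U$, and therefore open in $\C^m$ (since $U$ is open). Now $\phi(U) = \tilde\phi(U) \cap V$: indeed $x \in \phi(U)$ iff $x \in U$, all $t_j(x) = 0$, and all $s_j(x) > 0$, iff $x \in \tilde\phi(U)$ and $x \in V$. Since $V$ is open in $\C^m$, the set $\tilde\phi(U) \cap V$ is relatively open in $\tilde\phi(U)$, which is exactly the claim.

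I do not expect any serious obstacle here; the statement is essentially a bookkeeping consequence of admissibility (which upgrades the $D$-terms to honest continuous functions) plus the elementary fact that adjoining strict inequalities to a definable set cuts out a relatively open subset. The only point requiring a small amount of care is making sure the $s_j$ are genuinely continuous on all of $U$ — this is precisely where $U$-admissibility of $\phi$ (and not merely of $\tilde\phi$) is used, via Lemma~\ref{lem:admissible-term-analytic} — and checking that the shorthand $|t_1| = |t_2|$, etc., used elsewhere does not reintroduce any non-admissible subterm; but in a basic $D$-formula the only relations are equalities $t_j = 0$ and strict inequalities $s_j > 0$, so there is nothing subtle to verify.
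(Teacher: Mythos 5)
Your argument is correct and is exactly the one the paper intends: admissibility makes each $s_j$ real analytic (hence continuous) on $U$ by Lemma~\ref{lem:admissible-term-analytic}, so the locus $\{s_j>0\ \forall j\}$ is open in $U$ and $\phi(U)=\tilde\phi(U)\cap\{s_j>0\ \forall j\}$ is relatively open in $\tilde\phi(U)$. (The only slip is cosmetic: $U\subset\mathring I^m\subset\R^m$, not $\C^m$, but this does not affect the argument.)
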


The set defined by an admissible $D$-equation can be described in
terms of admissible projections in the sense
of~\secref{sec:admissible-graph}.

\begin{Prop}\label{prop:D-equation-to-proj}
  Let $U\subset\mathring I^{m+l}$ and $\phi$ be a $U$-admissible $D$-equation,
  \begin{equation}
    \phi = (t_1=0)\land\cdots\land(t_k=0).
  \end{equation}
  In the notations of~\secref{sec:admissible-graph}, there exist
  \begin{enumerate}
  \item Complex domains
    \begin{align}
      \Omega_z&\subset\C^m, & \Omega_w&\subset\C^N, & \Lambda&\subset\C^l
    \end{align}
    with $N\in\N$ and $I^{m+l}\subset\Omega_z\times\Lambda$.
  \item An open complex neighborhood
    $U\subset U_\C\subset \Omega_z\times\Lambda$.
  \item An analytic map $\psi:U_\C\to\Omega_w$.
  \item An analytic set $X\subset \Omega\times\Lambda$.
  \end{enumerate}
  such that $\psi$ is admissible and $Y:=\pi(X\cap\Gamma_\psi)$
  satisfies $Y_\R=\phi(U)$.
\end{Prop}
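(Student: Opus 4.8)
The plan is to eliminate the restricted divisions appearing in $\phi$ by introducing one extra complex coordinate for each $D$-subterm and recording the relation that defines it, thereby reducing $\phi$ to purely analytic ($D$-free) data, to which the machinery of \secref{sec:admissible-graph} applies. First I would enumerate all $D$-subterms $\tau_1,\ldots,\tau_N$ occurring in $t_1,\ldots,t_k$ in an order such that $\tau_i$ is a proper subterm of $\tau_j$ only when $i<j$, and write $\tau_i=D(\sigma_i,\rho_i)$. Replacing in $\sigma_i,\rho_i$ each maximal $D$-subterm $\tau_\ell$ (necessarily $\ell<i$) by a fresh variable $w_\ell$ yields $D$-free terms $\hat\sigma_i,\hat\rho_i$ in $X_1,\ldots,X_{m+l}$ and $w_1,\ldots,w_{i-1}$; the same substitution turns each $t_j$ into a $D$-free term $\hat t_j$ in $X_1,\ldots,X_{m+l},w_1,\ldots,w_N$. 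A $D$-free $L^D_\an$-term is a composition of the analytic symbols $f$, which converge near the relevant closed cubes, so all of $\hat\sigma_i,\hat\rho_i,\hat t_j$ extend holomorphically to $\Omega\times\Lambda$, where $\Omega:=\Omega_z\times\Omega_w$, once $\Omega_z\subset\C^m$, $\Omega_w\subset\C^N$, $\Lambda\subset\C^l$ are chosen to be sufficiently small bounded neighbourhoods of $I^m,I^N,I^l$.

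Next I would define $\psi=(\psi_1,\ldots,\psi_N)$ recursively by $\psi_i:=\hat\sigma_i(\cdot,\psi_1,\ldots,\psi_{i-1})/\hat\rho_i(\cdot,\psi_1,\ldots,\psi_{i-1})$. Using that $\phi$ is $U$-admissible, on $U$ one has $\hat\rho_i(\cdot,\psi_{<i})=\rho_i\neq0$ and $|\psi_i|=|\sigma_i/\rho_i|\le1$, so an induction on $i$ shows that $\psi_i$ agrees on $U$ with the term $\tau_i$, and $\psi$ extends holomorphically, bounded by $2$, to a thin complex neighbourhood $U_\C$ of $U$ which one can arrange so that $U_\C\cap\R^{m+l}=U$, $\overline{U_\C}\Subset\Omega_z\times\Lambda$ and $\psi(U_\C)$ has compact closure in $\Omega_w$; this makes $\Gamma_\psi$ relatively compact in $\Omega\times\Lambda$. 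I would then set
\begin{align}
  X_\Gamma&:=\{(z,w,\l)\in\Omega\times\Lambda : w_i\hat\rho_i(z,w,\l)=\hat\sigma_i(z,w,\l),\ i=1,\ldots,N\}, \\
  X&:=\{(z,w,\l)\in\Omega\times\Lambda : \hat t_j(z,w,\l)=0,\ j=1,\ldots,k\},
\end{align}
both analytic in $\Omega\times\Lambda$. Since $\hat\rho_i$ involves only $w_1,\ldots,w_{i-1}$ and remains non-zero on $U_\C$ after the substitution $w_{<i}=\psi_{<i}$, an induction on $i$ gives that a point of $\pi^{-1}(U_\C)$ lies on $X_\Gamma$ precisely when $w=\psi(z,\l)$, so $X_\Gamma\cap\pi^{-1}(U_\C)=\Gamma_\psi$ and $\psi$ is admissible in the sense of Definition~\ref{def:admissible-graph}. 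Finally, by construction $\hat t_j(z,\psi(z,\l),\l)$ is the holomorphic continuation of $t_j$ to $U_\C$, whence
\begin{equation}
  Y=\pi(X\cap\Gamma_\psi)=\{(z,\l)\in U_\C : t_1(z,\l)=\cdots=t_k(z,\l)=0\},
\end{equation}
and intersecting with $\R^{m+l}$, where $U_\C$ meets $\R^{m+l}$ in exactly $U$, yields $Y_\R=\phi(U)$.

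The main obstacle, I expect, is not any individual step but the simultaneous calibration of the nested neighbourhoods $\Omega_z,\Omega_w,\Lambda,U_\C$: they must be small enough that all the holomorphic extensions above exist, that $U_\C\cap\R^{m+l}$ equals $U$ exactly (so that one obtains $Y_\R=\phi(U)$ rather than merely an inclusion), and that $\Gamma_\psi$ is genuinely relatively compact -- the last point needing care since $\psi$ may blow up as $\partial U$ is approached from within $U$, which I would handle by letting the width of $U_\C$ taper to $0$ near $\partial U$ so that $\psi$ stays bounded on it. Once the substitution and these choices are fixed, the two inductions -- that $\psi_i$ restricts to $\tau_i$ on $U$, and that the defining equations of $X_\Gamma$ force $w=\psi(z,\l)$ over $U_\C$ -- are straightforward inductions on the complexity of terms, using only that substituting the value of a subterm back recovers the value of the whole term.
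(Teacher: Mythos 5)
Your proposal is correct and follows essentially the same route as the paper: introduce one new coordinate $W_j$ per $D$-subterm, replace each $D$-subterm by its variable to obtain $D$-free (hence holomorphically continuable) terms, define $\psi$ as the tuple of evaluated $D$-subterms (bounded on $U$ by admissibility), take $X_\Gamma=\{s_{j,2}'W_j=s_{j,1}'\}$ and $X=\{t_j'=0\}$, and shrink $U_\C$ so that the continuations exist, the denominators stay non-vanishing, and $\Gamma_\psi$ is relatively compact. The calibration issue you flag is handled in the paper exactly as you suggest, by choosing $U_\C$ thin enough that $\psi(U_\C)$ lands in a relatively compact neighborhood of $I^N$.
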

\begin{proof}
  Let $\{s_1,\ldots,s_N\}$ denote all terms of the type
  $s_j=D(s_{j,1},s_{j,2})$ appearing in $\phi$. As a notational
  convenience we write $X=X_1,\ldots,X_{m+n}$ and $W=W_1,\ldots,W_N$
  for variables on $\R^{n+m}$ and $\R^N$.
  
  For every term $t(X)$ appearing in $\phi$ we define an $L_\an$ term
  $t'(X,W)$ by recursion as follows: if $t$ is a variable then
  $t':=t$; if $t=f(t_1,\ldots,t_k)$ then $t'=f(t_1',\ldots,t_k')$;
  finally if $t=s_j$ then $t'=W_j$. As $L_\an$-terms, every term $t'$
  corresponds to a real-analytic function $t':I^{m+N+n}\to I$. We let
  $\Omega_z\times\Omega_w\times\Lambda$ denote some complex
  neighborhood of $I^{m+N+n}$ to which every term $t'$ admits analytic
  continuation.
  
  By Lemma~\ref{lem:admissible-term-analytic}, all terms appearing in
  $\phi$ evaluate to real analytic maps from $U$ to $I$. We define a
  map $\psi:U\to\Omega_w$ by
  \begin{equation}
    \psi(x) = (s_1(x),\ldots,s_N(x)).
  \end{equation}
  We note that $\psi(U)\subset I^N$ and, by definition of
  $U$-admissibility, all the terms $s_{j,2}$ evaluate to non-vanishing
  functions on $U$. Let $U_w$ be a relatively compact neighborhood of
  $I^N$ in $\Omega_w$. Then there exists a relatively compact open
  neighborhood $U_\C\subset\Omega_z\times\Lambda$ of $U$ such that
  \begin{enumerate}
  \item $(U_\C)_\R=U$ and $\psi(U_\C)\subset U_w$.
  \item All terms appearing in $\phi$ admit analytic continuation to
    $U_\C$.
  \item All the terms $s_{j,2}$ evaluate to non-vanishing maps on $U_\C$.
  \end{enumerate}
  Henceforth we view $U_\C$ as the
  domain of $\psi$. The graph $\Gamma=\Gamma_\psi$ is relatively
  compact in $\Omega\times\Lambda$, being contained in the product of
  the relatively compact sets $U_\C\subset\Omega_z\times\Lambda$ and
  $U_w\subset\Omega_w$.

  By construction of $t'$ it is clear that
  \begin{equation}\label{eq:t-v-t'}
    t(z,\l) = t'(z,\psi(z,\l),\l) \qquad \text{for }(t,\l)\in U_\C.
  \end{equation}
  We define the analytic subset
  $X_\Gamma\subset\Omega\times\Lambda$ by
  \begin{equation}
    X_\Gamma = \{ s_{j,2}' W_j = s_{j,1}' : j=1,\ldots,N \}.
  \end{equation}
  It is easy to check by induction that $X_\Gamma$ agrees with
  $\Gamma$ over $U_\C$ (using~\eqref{eq:t-v-t'} and the fact that
  $s_{j,2}$ evaluate to non-vanishing maps on $U_\C$).

  Finally we define $X\subset\Omega\times\Lambda$ by
  \begin{equation}
    X = \{ t_1' = \cdots = t_k'= 0 \}
  \end{equation}
  and set $Y:=\pi(X\cap\Gamma)\subset\Omega_z\times\Lambda$.
  Then~\eqref{eq:t-v-t'} at the points of $U=(U_\C)_\R$ gives
  $Y_\R=\phi(U)$.
\end{proof}

\subsection{Estimate for subanalytic sets}

To state the general form of our main result we introduce the notion
of \emph{complexity} of a semi-algebraic set. We say that a
semialgebraic set $S\subset\R^m$ has complexity $(m,s,d)$ if it
defined by a semialgebraic formula involving $s$ different relations
$P_j>0$ or $P_j=0$, where the polynomials $P_j$ have degrees bounded
by $d$.

\begin{Thm}\label{thm:subanalytic-main}
  Let $A\subset\R^{m+n}$ be a bounded subanalytic set and
  $\e>0$. There exist constants $d=d(A,\e)$ and $N=N(A,\e)$ with the
  following property. For any $y\in I^n$ and any $H\in\N$ there exist
  at most $N H^\e$ many smooth connected semialgebraic sets
  $S_\alpha\subset\C^m$ with complexity $(m,d,d)$ such that
  \begin{equation}
    A_y(\Q,H)\subset \bigcup_\alpha A_y(S_\alpha).
  \end{equation}
\end{Thm}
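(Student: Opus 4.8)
The plan is to deduce Theorem~\ref{thm:subanalytic-main} from the complex-analytic Theorem~\ref{thm:exploring-projection} by passing through the model theory of Denef--van den Dries, using the machinery of \secref{sec:subanalytic}. After rescaling $A$ by a suitable positive rational (which changes heights, and the complexities of the output semialgebraic sets, only by bounded factors) I may assume $A\subset\mathring I^{m+n}$, so that by Theorem~\ref{thm:denef-vdd} we have $A=\phi(\mathring I^{m+n})$ for some quantifier-free $L^D_\an$-formula $\phi$. First I would apply Proposition~\ref{prop:admissible-decomp} with $U=\mathring I^{m+n}$, and then Lemma~\ref{lem:basic-disjunction} to each resulting formula, to write $A=\bigcup_{j=1}^k\psi_j(U_j)$ where $U_j\subset\mathring I^{m+n}$ is open and $\psi_j$ is a basic $D$-formula admissible in $U_j$. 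It then suffices to treat each piece $\psi_j(U_j)$ separately.

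For a fixed $j$, I would pass to the basic $D$-equation $\tilde\psi_j$ obtained by deleting the inequalities of $\psi_j$; it remains admissible in $U_j$, and by Lemma~\ref{lem:phi-tilde-rel-open} the set $\psi_j(U_j)$ is relatively open in $\tilde\psi_j(U_j)$. Proposition~\ref{prop:D-equation-to-proj}, applied to $\tilde\psi_j$ and $U_j$ with $l=n$, realizes $\tilde\psi_j(U_j)$ as the real part $Y_\R$ of an admissible projection $Y=\pi(X\cap\Gamma_\psi)$, where the first $m$ coordinates play the role of $z$ and the last $n$ the role of the parameter $\l$ (so that $I^n$ lies inside the parameter space $\Lambda$). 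Since $\psi$ is admissible, Theorem~\ref{thm:exploring-projection} applies to this $X$: for each $\e>0$ it produces constants $d_j,N_j$ such that for every $\l\in\Lambda$ and $H\in\N$ there are at most $N_jH^\e$ irreducible varieties $V_\alpha\subset\C^m$ of degree $\le d_j$ with $Y_\l(\Q,H)\subset\bigcup_\alpha Y_\l(V_\alpha)$.

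The remaining step is to translate this complex covering into the desired statement about $A_y$. Specializing $\l=y\in I^n$, and using $(\tilde\psi_j(U_j))_y=(Y_\R)_y=(Y_y)_\R$ together with the fact that rational points are real and with \eqref{eq:alg-part-R}, I would obtain
\[
  (\psi_j(U_j))_y(\Q,H)\ \subset\ Y_y(\Q,H)\ \subset\ \bigcup_\alpha\big((Y_y)_\R\big)\big((V_\alpha)_\R\big).
\]
I then decompose each real algebraic set $(V_\alpha)_\R$ into finitely many smooth connected semialgebraic pieces $S_{\alpha,\beta}$, their number and complexity bounded in terms of $m$ and $\deg V_\alpha$ only; use the elementary inclusion $B(W_1\cup W_2)\subset B(W_1)\cup B(W_2)$ to spread the germ-containment over the $S_{\alpha,\beta}$; then invoke the relatively-open clause of Lemma~\ref{lem:alg-part-rules} to replace $(\tilde\psi_j(U_j))_y$ by $(\psi_j(U_j))_y$ for a point $q$ lying in the latter; and finally use $(\psi_j(U_j))_y\subset A_y$ to conclude $q\in A_y(S_{\alpha,\beta})$. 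Summing over $j$ gives $A_y(\Q,H)\subset\bigcup_{j,\alpha,\beta}A_y(S_{\alpha,\beta})$, with $N=\sum_jN_j\cdot(\#\text{ of }\beta\text{ per }\alpha)$ and $d$ the maximal complexity of the $S_{\alpha,\beta}$, both depending only on $A$ and $\e$.

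The substantive content is entirely in Theorem~\ref{thm:exploring-projection}, so the difficulty here is organizational rather than conceptual: one must carefully track the splitting into $(z,\l)$-coordinates and verify that the various $X(W)$-type germ-containment assertions behave correctly under passing to real points, to fibers, to finite unions, and to relatively open subsets. The one external ingredient I would pin down with a precise reference is the decomposition of a real algebraic set of bounded degree into smooth connected semialgebraic cells whose number and complexity are bounded purely in terms of the degree and the ambient dimension (available from the theory of cylindrical algebraic decomposition / semialgebraic stratification).
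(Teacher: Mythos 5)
Your proposal is correct and follows essentially the same route as the paper: rescaling, quantifier elimination via Theorem~\ref{thm:denef-vdd}, reduction to admissible basic $D$-formulas by Proposition~\ref{prop:admissible-decomp} and Lemma~\ref{lem:basic-disjunction}, complexification of the associated $D$-equation via Proposition~\ref{prop:D-equation-to-proj}, application of Theorem~\ref{thm:exploring-projection}, and the final bookkeeping with \eqref{eq:alg-part-R}, Lemmas~\ref{lem:phi-tilde-rel-open} and~\ref{lem:alg-part-rules}, and a degree-bounded smooth stratification. The only differences are cosmetic (you stratify before invoking the relatively-open clause rather than after), and the external stratification input you flag is exactly the one the paper cites.
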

\begin{proof}
  Rescaling $A$ by a sufficiently large integer, $\tilde A:=\frac1M A$
  we may assume that $\tilde A\subset\mathring I^{n+m}$. This clearly
  does not affect the heights of points by more than a constant
  factor, and it will suffice to prove the claim for $\tilde A$ and
  then rescale the varieties $\tilde V_\alpha$ back to
  $V_\alpha:=M V_\alpha$. We thus assume without loss of generality
  that $A\subset\mathring I^{n+m}$.
  
  By Theorem~\ref{thm:denef-vdd} we may write
  $A=\phi(\mathring I^{m+n})$ for some quantifier-free
  $L^D_\an$-formula $\phi$. By
  Proposition~\ref{prop:admissible-decomp} and
  Lemma~\ref{lem:basic-disjunction} we may write
  \begin{equation}
    A = \bigcup_{j=1}^k \bigcup_{i=1}^{n_j} \phi_{ji}(U_j)
  \end{equation}
  where $\phi_{ji}$ is a $U_j$-admissible basic $D$-formula. By the
  first part of Lemma~\ref{lem:alg-part-rules} it is clear that it
  will suffice to prove the claim with $A$ replaced by each
  $\phi_{ij}(U_j)$. We thus assume without loss of generality that
  $\phi$ is already a $U$-admissible basic $D$-formula and prove the
  claim for $A=\phi(U)$.

  Recall that $\tilde\phi$ is a $U$-admissible $D$-equation. We write
  $B=\tilde\phi(\mathring I^{m+n})$. Applying
  Proposition~\ref{prop:D-equation-to-proj} to $\tilde\phi$ and using
  Theorem~\ref{thm:exploring-projection} we construct a locally
  analytic set $Y\subset\C^{m+n}$ such that $Y_\R=B$, and for any
  $y\in I^n$ there exist at most $N' H^\e$ many irreducible algebraic
  varieties $V_\alpha\subset\C^m$ with $\deg V_\alpha\le d'$ such that
  \begin{equation}
    Y_y(\Q,H)\subset \bigcup_\alpha Y_y(V_\alpha)
  \end{equation}
  with $d,N$ as in Theorem~\ref{thm:exploring-projection}.
  By~\eqref{eq:alg-part-R} and $Y_\R=B$ we have
  \begin{equation}
    B_y(\Q,H)\subset \bigcup_\alpha B_y(\cV_\alpha), \qquad \cV_\alpha=(V_\alpha)_\R.
  \end{equation}
  Finally, we recall that $A$ is relatively
  open in $B$ by Lemma~\ref{lem:phi-tilde-rel-open}. Then the same is
  true for the fiber $A_y\subset B_y$, and
  \begin{equation}
    \begin{split}
      A_y(\Q,H)&\subset A_y\cap(B_y(\Q,H))\subset A_y\cap
      \bigcup_\alpha B_y(\cV_\alpha) = \bigcup_\alpha ( A_y\cap
      B_y(\cV_\alpha)) \\ &= \bigcup_\alpha A_y(\cV_\alpha)
    \end{split}
  \end{equation}
  where the last equality is given by the second part of
  Lemma~\ref{lem:alg-part-rules}.

  If we write each real-algebraic variety $\cV_\alpha$ as a union of
  smooth connected strata $\cV_\alpha=\cup_j S_{\alpha,j}$ then we have
  \begin{equation}
    A_y(\Q,H)\subset\bigcup_\alpha A(\cV_\alpha) \subset \bigcup_{\alpha,j} A(S_{\alpha,j}).
  \end{equation}
  It remains to note that since $\deg V_\alpha\le d'$, the number and
  complexity of the strata $S_{\alpha,j}$ is bounded by some number
  $d$ depending only on $d'$. This follows from general uniformity
  properties in the algebraic category, and in fact one may derive
  explicit (and polynomial in $d'$) estimates for $d$ (for details see
  \cite[Proposition~37]{me:rest-wilkie}). Taking $N=N'd$ finishes the
  proof.
\end{proof}

We can now finish the proof of Theorem~\ref{thm:main}.
\begin{proof}[Proof of Theorem~\ref{thm:main}.]
  By Theorem~\ref{thm:subanalytic-main} for any $y\in I^n$ and any
  $H\in\N$ there exist at most $N H^\e$ many smooth connected
  semialgebraic sets $S_\alpha\subset\C^m$ such that
  \begin{equation}
    A_y(\Q,H)\subset \bigcup_\alpha A_y(S_\alpha).
  \end{equation}
  By Lemma~\ref{lem:alg-part-trans}, for any positive dimensional
  $S_\alpha$ we have $A(S_\alpha)\subset A_y^\alg$. Thus
  $\#A_Y^\trans(\Q,H)$ is bounded by the number of zero-dimensional
  strata, i.e. by $NH^\e$.
\end{proof}

\bibliographystyle{plain} \bibliography{../rest-wilkie/nrefs}

\end{document}